\newcommand{\commentout}[1]{}
\newtheorem{thm}{Theorem}[section]
\newtheorem{prop}[thm]{Proposition}
\newtheorem{cor}[thm]{Corollary}
\newtheorem{rmk}[thm]{Remark}
\newcommand{\nwc}{\newcommand*}
\nwc{\ben}{\begin{equation*}}
\nwc{\bea}{\begin{eqnarray}}
\nwc{\beq}{\begin{eqnarray}}
\nwc{\bean}{\begin{eqnarray*}}
\nwc{\beqn}{\begin{eqnarray*}}
\nwc{\beqast}{\begin{eqnarray*}}
\nwc{\eal}{\end{align}}
\nwc{\een}{\end{equation*}}
\nwc{\eea}{\end{eqnarray}}
\nwc{\eeq}{\end{eqnarray}}
\nwc{\eean}{\end{eqnarray*}}
\nwc{\eeqn}{\end{eqnarray*}}
\nwc{\nn}{\nonumber}
\nwc{\mb}{\mathbf}
\nwc{\ml}{\mathcal}
\newcommand{\lt}{\left}
\newcommand{\rt}{\right}
\nwc{\vep}{\varepsilon}
\nwc{\ep}{\epsilon}
\nwc{\vrho}{\varrho}
\nwc{\orho}{\bar\varrho}
\nwc{\vpsi}{\varpsi}
\nwc{\lamb}{\lambda}
\nwc{\om}{\omega}
\nwc{\Om}{\Omega}
\nwc{\al}{\alpha}
\nwc{\sgn}{\mbox{\rm sgn}}
\nwc{\IA}{\mathbb{A}} 
\nwc{\bi}{\mathbf{i}}
\nwc{\ba}{\mathbf{a}}
\nwc{\bmb}{\mathbf{b}}
\nwc{\bo}{\mathbf{o}}
\nwc{\IB}{\mathbb{B}}
\nwc{\IC}{\mathbb{C}} 
\nwc{\ID}{\mathbb{D}} 
\nwc{\IM}{\mathbb{M}} 
\nwc{\IP}{\mathbb{P}} 
\nwc{\II}{\mathbb{I}} 
\nwc{\IE}{\mathbb{E}} 
\nwc{\IF}{\mathbb{F}} 
\nwc{\IG}{\mathbb{G}} 
\nwc{\IN}{\mathbb{N}} 
\nwc{\IQ}{\mathbb{Q}} 
\nwc{\IR}{\mathbb{R}} 
\nwc{\IT}{\mathbb{T}} 
\nwc{\IZ}{\mathbb{Z}} 
\nwc{\cE}{{\ml E}}
\nwc{\cP}{{\ml P}}
\nwc{\cQ}{{\ml Q}}
\nwc{\cL}{{\ml L}}
\nwc{\cX}{{\ml X}}
\nwc{\cW}{{\ml W}}
\nwc{\cZ}{{\ml Z}}
\nwc{\cR}{{\ml R}}
\nwc{\cV}{{\ml V}}
\nwc{\cT}{{\ml T}}
\nwc{\crV}{{\ml L}_{(\delta,\rho)}}
\nwc{\cC}{{\ml C}}
\nwc{\cO}{{\ml O}}
\nwc{\cA}{{\ml A}}
\nwc{\cK}{{\ml K}}
\nwc{\cB}{{\ml B}}
\nwc{\cD}{{\ml D}}
\nwc{\cF}{{\ml F}}
\nwc{\cS}{{\ml S}}
\nwc{\cM}{{\ml M}}
\nwc{\cG}{{\ml G}}
\nwc{\cH}{{\ml H}}
\nwc{\bk}{{\mb k}}
\nwc{\bn}{{\mb n}}
\nwc{\bp}{{\mb p}}
\nwc{\bz}{\mb z}
\nwc{\bl}{{\mb l}}
\nwc{\bj}{{\mb j}}
\nwc{\bs}{{\mb s}}
\nwc{\by}{\mathbf{h}}
\nwc{\bZ}{\mathbf{Z}}
\nwc{\bF}{\mathbf{F}}
\nwc{\bE}{\mathbf{E}}
\nwc{\bV}{\mathbf{V}}
\nwc{\bY}{\mathbf Y}
\nwc{\br}{\mb r}
\nwc{\pft}{\cF^{-1}_2}
\nwc{\bU}{{\mb U}}
\nwc{\bG}{{\mb G}}
\nwc{\bg}{\mathbf{g}}
\nwc{\mbf}{\mathbf{f}}
\nwc{\mbe}{\mathbf{e}}
\nwc{\be}{\mathbf{e}}
\nwc{\ind}{\operatorname{I}}
\nwc{\mbx}{\mathbf{f}}
\nwc{\bb}{\mathbf{g}}
\nwc{\xmax}{f_{\rm max}}
\nwc{\xmin}{f_{\rm min}}
\nwc{\suppx}{\hbox{\rm supp} (\mbf)}
\nwc{\cI}{\IZ^2_N}
\nwc{\chis}{{\chi^{\rm s}}}
\nwc{\chii}{{\chi^{\rm i}}}
\nwc{\pdfi}{{f^{\rm i}}}
\nwc{\pdfs}{{f^{\rm s}}}
\nwc{\pdfii}{{f_1^{\rm i}}}
\nwc{\pdfsi}{{f_1^{\rm s}}}
\nwc{\thetatil}{{\tilde\theta}}
\nwc{\red}{\color{red}}
\nwc{\blue}{\color{blue}}
\nwc{\prox}{\hbox{prox}}
\nwc{\diag}{\hbox{\rm diag}}
\nwc{\supp}{{\hbox{\rm supp}}}
\nwc{\sloc}{J_{\rm f}}
\nwc{\bu}{{\mb u}}
\nwc{\bv}{{\mb v}}
\nwc{\cU}{\mathcal{U}}
\nwc{\cN}{\mathcal{N}}
\nwc{\bN}{\mathbf{N}}
\nwc{\mbm}{\mathbf{m}}
\nwc{\bw}{\mathbf{w}}
\nwc{\bom}{\mathbf{w}}
\nwc{\bt}{\mathbf{t}}
\nwc{\z}{y}
\nwc{\cY}{\mathcal{Y}}
\nwc{\bM}{\mathbf{M}}
\nwc{\half}{{1\over 2}}
\nwc{\Sf}{S_{\rm f}}
\nwc{\Jf}{J_{\rm f}}
\nwc{\nul}{\hbox{\rm null}_\IR}
\nwc{\spanR}{\hbox{\rm span}_\IR}
\nwc{\Arg}{\hbox{\rm Arg~}}
\nwc{\fdr}{S_{\rm f}}
\nwc{\phase}[1]{\exp\lt[i\measured #1\rt]}
\nwc{\im}{{\rm i}}
\nwc{\cle}{\preccurlyeq}
\nwc{\lb}{\llbracket}
\nwc{\rb}{\rrbracket}
\nwc{\modpi}{{{\rm mod}\,2\pi}}
\nwc{\px}{P_X}
\nwc{\pxp}{P_X^\perp}
\nwc{\py}{P_Y}
\nwc{\rx}{R_X}
\nwc{\ry}{R_Y}
\nwc{\pxtil}{\tilde P_X}
\nwc{\pxptil}{\tilde P_X^\perp}
\nwc{\rxtil}{\tilde R_X}
\setlist[enumerate]{leftmargin=.5in}
\setlist[itemize]{leftmargin=.5in}
\title{Fixed Point Analysis of Douglas-Rachford Splitting for  Ptychography and Phase Retrieval}
\author{Albert Fannjiang$^*$
 \address{
Department of Mathematics, University of California, Davis, California  95616, USA.
} \and Zheqing Zhang
\address{ Department of Mathematics, University of California, Davis, California  95616, USA. } 
}
\thanks{$^*$Corresponding author: fannjiang@math.ucdavis.edu}
\thanks{\hspace{0.1cm} \blue SIAM Journal on Imaging Sciences (2020)}
\begin{document}

\title{Fixed Point Analysis of Douglas-Rachford Splitting for  Ptychography and Phase Retrieval}

\maketitle 
\begin{abstract}
Douglas-Rachford Splitting (DRS) methods based on the proximal point algorithms for the Poisson and Gaussian log-likelihood functions 
are proposed for ptychography and phase retrieval.

Fixed point analysis shows  that the DRS iterated sequences are always bounded explicitly in terms of the step size
and that the fixed points are   attracting if and only if the fixed points are regular solutions.
This alleviates  two major drawbacks of the classical Douglas-Rachford algorithm:
slow convergence when the feasibility problem is consistent and divergent behavior when the feasibility problem is inconsistent. 

Fixed point analysis also leads to a simple, explicit expression for the optimal step size in terms of the spectral gap
of an underlying matrix. 

When applied to the challenging problem of {\em blind} ptychography, which seeks to recover both the object and the probe simultaneously,
Alternating Minimization with the DRS inner loops, even with  a far from optimal step size,  converges geometrically under the nearly minimum conditions established in the uniqueness theory.

\commentout{Three boundary conditions are considered in the simulations: periodic, dark-field and bright-field boundary conditions.
The dark-field boundary condition is suited for isolated objects while the bright-field boundary condition is for non-isolated objects.
The periodic boundary condition is a mathematically convenient reference point. Depending on the availability of the boundary prior
the dark-field and 
the bright-field boundary conditions may or may not be enforced in the reconstruction.  Not surprisingly, enforcing the boundary condition 
improves the rate of convergence, sometimes in a significant way. Enforcing the bright-field condition in the reconstruction
can also remove
the linear phase ambiguity. 
}

\end{abstract}



\section{Introduction}

Phase retrieval may be posed as an inverse problem in which an object vector with certain properties is to be reconstructed from the intensities of its Fourier transform. By encoding the properties and the Fourier intensities as constraint sets, phase retrieval can be cast as a feasibility problem, i.e. the problem of finding a point in the intersection of the constraint sets.
The challenge  is that the intensities of the Fourier transform results in a non-convex constraint
set (a high dimensional torus of variable radii). 

Projection algorithms comprise a general class of iterative methods for solving feasibility problems by projecting onto each of the constraint sets at each step 
\cite{BB}. The most basic projection algorithm is von Neumann's Alternating Projections (AP) (aka Error Reduction in phase retrieval \cite{Fie82}). However, AP tends to stagnate when applied to phase retrieval,
resulting in poor performance. 

A better method   than AP is the classical Douglas-Rachford algorithm (a.k.a. Averaged Alternating Reflections (AAR)) \cite{DR, GM76, LM79, GM76}, which apparently can avoid the stagnation problem in many non-convex 
problems. When applied to phase retrieval, the classical Douglas-Rachford algorithm is a special case of Fienup's Hybrid-Input-Output algorithm \cite{BCL02,Fie82}.

In addition to the standard phase retrieval, AAR has been applied
to  ptychography under the name of difference map \cite{DM08,probe09,ADM}. 
Ptychography uses a localized coherent probe to illuminate different parts of a unknown extended object 
and collect multiple diffraction patterns 
 as measurement data  (Figure \ref{fig0}). The redundant information in the overlap between adjacent illuminated spots is then exploited  to improve phase retrieval methods  \cite{Pfeiffer,Nugent}. 
  Recently ptychography  has been extended to the Fourier domain \cite{FPM13, Yang14}.  In Fourier ptychography,  illumination angles are scanned sequentially with a programmable array source with the diffraction pattern measured at each angle  \cite{Tian15,FPM15}. Tilted illumination samples different regions of Fourier space, as in synthetic-aperture and structured-illumination imaging.

Local, linear convergence of AAR as applied to phase retrieval as well as  ptychography  was recently proved in  \cite{FDR,ptych-unique}. Conditions for global convergence, however, are not known. 
Numerical evidence points to sub-linear rate when convergence happens. 
On the other hand, for inconsistent feasibility problems,  AAR iteration  is known to diverge to infinity even in the convex case (see Proposition \ref{prop0}(ii)). This poses a great challenge to AAR when the data contain noise because in phase retrieval the dimension of the measurement data
is much higher than that of the unknown object (an over-determined system).

The purpose of this work is to develop reconstruction schemes based on more general Douglas-Rachford splitting (DRS) with  adjustable step sizes, perform the fixed point analysis and demonstrate numerical convergence. AAR is the limiting case of DRS. 

The DRS method is an optimization method based on proximal operators, a natural extension of projections,  and is closely related to the Alternating Direction Method of Multipliers (ADMM). 
The performance of DRS and ADMM in the non-convex setting depends sensitively on
the choice of the loss functions as well as the step sizes. 
Typically, global convergence of DRS requires a loss function possessing a uniformly Lipschitz gradient  and sufficiently large step sizes \cite{FDR, LP,Hesse}, both
of which, however,  tend to hinder the performance of DRS. 

In this paper, the loss functions are based on the log-likelihood function for the most important Poisson noise, which does not have
a uniformly Lipschitz gradient, with an optimal step size, which is necessarily quite large.  

We show by a fixed point analysis that the DRS method is well behaved
in the sense that the DRS iterated sequences are always bounded (explicitly in terms of the step size) 
and that the fixed points are   attracting if and only if the fixed points are regular solutions.
In other words, the DRS methods remove AAR's two major drawbacks: 
slow convergence when the feasibility problem is consistent and divergent behavior when the feasibility problem is inconsistent.

Moreover, the fixed point analysis leads to the determination of  the  optimal step size and, along with it, 
simple and efficient algorithms with no tuning parameter (Averaged Projection-Reflection). 
The main application considered is the more  challenging form of ptychography called {\em blind ptychography} which seeks to recover both the unknown object and
the probe function simultaneously.
When 
properly initialized, the DRS algorithms with the optimal step size converge globally and
geometrically to the true solution modulo the inherent ambiguities. 

  \begin{figure}[t]
\begin{center}
\includegraphics[width=10cm]{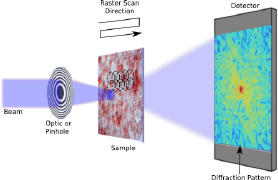}
\caption{Simplified ptychographic setup showing a Cartesian grid used for the overlapping raster scan positions \cite{parallel}. See Appendix \ref{app:matrix} for details. 
}
\label{fig0}
\end{center}
\end{figure}

The rest of the paper is organized as follows. In Section \ref{sec:DRS}, we introduce the Douglas-Rachford splitting method as the key ingredient of our reconstruction algorithms, Gaussian-DRS and Poisson-DRS. We give the fixed point and stability analysis in Sections \ref{sec:fixed}, \ref{sec:stab} and \ref{sec:gap}. In Section \ref{sec:stepsize}, we discuss the selection of
the optimal step size.  In Section \ref{sec:blind}, we discuss the application to blind ptychography and
 in Section \ref{sec:num}, we present numerical experiments. In Appendix \ref{app:matrix}, we discuss the structure of the measurement matrices.  In Appendix \ref{app:likelihood}, we show that Gaussian-DRS is an asymptotic form of Poisson-DRS. In Appendix \ref{app:Poisson} we give a perturbation analysis for the Poisson DRS.  In Appendix \ref{app:eigen}, we analyze the eigenstructure of the measurement matrices. We conclude  in Section \ref{sec:last}. A preliminary version of the present work is given in \cite{DRS}.

\section{Averaged Alternating Reflections (AAR)}
\label{sec:AAR} 

The classical Douglas-Rachford algorithm  is based on the following characterization of {\em convex} feasibility problems. 

Let $X$ and $Y$ be
the constraint sets. Let $P_X$ be the projection onto $X$ 
and $R_X=2P_X-I$ the corresponding reflector. $P_Y$ and $R_Y$ are defined likewise. Then 
\beq
\label{dr0}
u\in X\cap Y\quad \mbox{if and only if }\quad u=R_YR_X u 
\eeq
\cite{Boyd17}. The latter fixed point equation motivates the Peaceman-Rachford (PR) method: For $k=0,1, 2, \ldots$
\[
u_{k+1}= R_Y R_X y_{k}. 
\]
The classical Douglas-Rachford algorithm is the {\em averaged} version of PR: For $k=0,1, 2, \ldots$
\beq
\label{dr}
u_{k+1}= \half u_k+\half R_Y R_X u_{k},
\eeq
hence the name {\em Averaged Alternating Reflections} (AAR). 

A standard result for AAR in the convex case is this.

\begin{prop} \cite{BCL04} Suppose $X$ and $Y$ are closed and convex sets of a finite-dimensional vector space $E$. Let $\{u_k\}$ be an AAR-iterated sequence for any $u_1\in E$. Then one of the following alternatives holds:

(i) $X\cap Y\neq \emptyset$ and $(u_k)$ converges to a point $u$ such that $P_X u\in X\cap Y$;\\
(ii) $X\cap Y= \emptyset$ and $\|u_k\|\to\infty$. 
\label{prop0}
\end{prop}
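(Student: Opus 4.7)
Set $T := \tfrac{1}{2}(I + R_Y R_X)$, so that $u_{k+1} = T u_k$; the plan is to exploit firm nonexpansiveness of $T$ together with the standard asymptotic theory for averaged maps on a finite-dimensional Hilbert space.

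First I would lay out the basic structure. Because $P_X$ and $P_Y$ are firmly nonexpansive on $E$, the reflectors $R_X, R_Y$ are nonexpansive, hence so is the composition $R_Y R_X$; averaging with $I$ then makes $T$ firmly nonexpansive. I would next refine the fixed-point equation \eqref{dr0} to its \emph{shadow} form: writing $x := P_X u$, a short computation shows that any $u \in \mathrm{Fix}(T)$ satisfies $P_Y(2x-u) = x$, so $P_X u = P_Y R_X u \in X \cap Y$, while conversely any $u \in X \cap Y$ is itself a fixed point of $T$. Hence $\mathrm{Fix}(T) \neq \emptyset$ if and only if $X \cap Y \neq \emptyset$, and in that case fixed points of $T$ deliver, through their $P_X$-images, points of $X \cap Y$.

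For alternative (i), I would invoke the Krasnoselskii--Mann / Opial theorem: when $\mathrm{Fix}(T) \neq \emptyset$, the orbit $\{T^k u_1\}$ is Fej\'er-monotone with respect to $\mathrm{Fix}(T)$ and converges weakly, hence strongly in finite dimension, to some $u \in \mathrm{Fix}(T)$. The shadow identity then yields $P_X u \in X \cap Y$, which is the content of (i).

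For alternative (ii), I would combine two classical ingredients. First, the Baillon--Bruck--Reich / Pazy asymptotic theorem for nonexpansive maps gives $u_{k+1} - u_k \to -w^\ast$ and $u_k/k \to -w^\ast$, where $w^\ast$ is the element of least norm in $\overline{\mathrm{ran}(I-T)}$. Second, using the identity $Tu - u = P_Y R_X u - P_X u$, one identifies $\overline{\mathrm{ran}(I - T)}$ in terms of the ``gap set'' between $X$ and $Y$; by the shadow characterization from step one, $0 \in \mathrm{ran}(I-T)$ would force $\mathrm{Fix}(T) \neq \emptyset$ and thus $X \cap Y \neq \emptyset$, contradicting the hypothesis. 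When $\mathrm{dist}(X, Y) > 0$, strict convex separation in finite dimension pushes $0$ out of the \emph{closure} as well, producing $w^\ast \neq 0$ and a linear drift to infinity. The remaining case, $\mathrm{dist}(X,Y) = 0$ with $X \cap Y = \emptyset$, requires a recession-direction analysis of the two closed convex sets to show that orbits nevertheless escape every ball.

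The main obstacle is precisely this last point in (ii). The linearly drifting case is immediate from Baillon--Bruck--Reich, but the borderline situation where $X$ and $Y$ are asymptotic (so $w^\ast = 0$) demands a direct argument ruling out bounded orbits. The route I would pursue is to exploit firm nonexpansiveness of $T$ to set up a Fej\'er-type inequality against translates of an approximate common point, then use maximal monotonicity of $I - T$ together with $0 \notin \mathrm{ran}(I-T)$ to conclude that no subsequence of $(u_k)$ can stay bounded.
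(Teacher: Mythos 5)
The paper does not prove this proposition at all: it is quoted verbatim from \cite{BCL04} and used as a black box, so there is no internal argument to compare against. Your sketch essentially reconstructs the argument of that reference, and its skeleton is sound: $T=\tfrac12(I+R_YR_X)$ is firmly nonexpansive, $\mathrm{Fix}(T)\neq\emptyset$ iff $X\cap Y\neq\emptyset$ with $P_X(\mathrm{Fix}(T))\subseteq X\cap Y$, Krasnoselskii--Mann gives (i), and the Baillon--Bruck--Reich/Pazy minimal-displacement theorem drives (ii). One remark on the step you single out as the main obstacle: the borderline case $X\cap Y=\emptyset$ with $w^\ast=0$ does not require any recession-cone analysis or appeal to maximal monotonicity. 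When $w^\ast=0$ the same Baillon--Bruck--Reich conclusion $u_{k+1}-u_k\to -w^\ast=0$ already gives asymptotic regularity, and then the elementary compactness argument closes the case: if some subsequence $(u_{k_j})$ were bounded, in finite dimension it would have a convergent subsequence $u_{k_j}\to u$, and continuity of $T$ plus $u_{k_j+1}-u_{k_j}\to 0$ would force $Tu=u$, contradicting $\mathrm{Fix}(T)=\emptyset$; hence no bounded subsequence exists and $\|u_k\|\to\infty$. This also shows that your case split on $\mathrm{dist}(X,Y)$, and the identification of the least-norm element of $\overline{\mathrm{ran}(I-T)}$ with the gap vector of $X$ and $Y$ (true, but a nontrivial lemma you only gesture at), are not actually needed for the statement as given: the dichotomy $w^\ast\neq 0$ versus $w^\ast=0$ suffices on its own. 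With that simplification your outline is a complete and correct proof modulo the standard cited theorems.
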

In the consistent case  (i), the limit point $u$ is a fixed point of the AAR map \eqref{dr}, which after projection is in $X\cap Y$. However, 
the convergence rate of AAR is in general sublinear \cite{DR-rate2,DR-rate1}. 
The inconsistent case (ii) arises from noisy data or modeling errors resulting in divergent AAR iterated sequences, a major drawback of AAR  since the inconsistent case is prevalent with noisy data because of the higher dimension of data compared to the object. 

The AAR map \eqref{dr} is often written in the following form
\beq
\label{3step}
u_{k+1}= u_k+P_YR_Xu_k-P_Xu_k
\eeq
which is 
equivalent to the 3-step iteration
\beq
\label{dr2}
y_{k} &=& P_Xu_k; \\
z_{k} &=& P_Y (2y_{k}-u_k)=P_YR_X u_k\\
u_{k+1}&= & u_k+ z_{k}-y_{k}\label{dr2'}
\eeq

\subsection{Phase retrieval as feasibility}\label{sec:solution}

 For any finite dimensional vector $u$, define  its modulus vector $|u|$ as
 $
 |u|[j]= |u[j]|
 $
 and its phase vector $\sgn(u)$ as 
\[
\sgn(u)[j]=\lt\{\begin{matrix} 
1&\mbox{if $u[j]=0$}\\
u[j]/|u[j]|&\mbox{else. }
\end{matrix}\rt.
\]
where $j$ is the index for the vector component. Because of the value of $\sgn(u)$ where $u[j]=0$ is arbitrarily selected, such points are points of discontinuity of the $\sgn$ function. 

In phase retrieval including ptychography, we can write the given data $b$ as 
\beq
\label{pr}
b=|u|\quad \mbox{ with}\quad  u=A f
\eeq
for
some measurement matrix $A$ and unknown object $f$. For phase retrieval and ptychography,
$A$ has some special features described in Appendix \ref{app:matrix}. For most of the subsequent analysis, however,
these special features are not relevant.

Let  $O$ be the object space, typically a finite-dimensional complex vector space, and $X=AO$.
Since the object is a two dimensional, complex-valued image, we let 
$O=\IC^{n^2}$ where $n$ is the number of pixels in each dimension. 

Let $N$ be the total number of data. The data manifold 
\[
Y:=  \{u\in \IC^N: |u|=b\}
\]
 is a $N$ (real) dimensional torus.  
For phase retrieval it is necessary that  $N>2n^2$. Without loss of generality  we assume that $A$ has a full rank.

The problem of phase retrieval and ptychography can be formulated as the feasibility problem
\beq
 \label{feas}
\hbox{Find}\quad  u\in  X \cap Y, 
 \eeq
 in the transform domain $\IC^N$ instead of 
the object domain $\IC^{n^2}$.

Let us clarify  the meaning of solution in  the transform domain since $A$ is overdetermining. 
Let $\odot$ denotes the component-wise (Hadamard) product and we can write
\beq
\label{proj}
P_X u=AA^+ u,&& P_Y u=b\odot\sgn(u)\\
R_X=2P_X-I, &&R_Y=2P_Y-I \nn
\eeq
where $A^+ := (A^*A)^{-1}A^*$ is the  pseudo-inverse of $A$.

We refer to $u=e^{\im \alpha}Af, \alpha\in \IR$, as the {\em true} solution (in the transform domain),
up to a constant phase factor $e^{\im \alpha}$. 
We say that $u$ is a {\em generalized solution} (in the transform domain) if 
\[
|\hat u|=b,\quad \hat u:=P_X u. 
\]
Accordingly,  the alternative (i) in Proposition \ref{prop0} means
that if a convex feasibility problem is consistent then   every AAR iterated sequence converges to a generalized solution and hence every fixed point is a generalized solution.

Typically a generalized solution $u$  is neither a feasible solution (since $|u|$ may not equal $b$)  nor unique (since $A$ is overdetermining) and, 
if $P_X z=0$,  $u+z$ is also a generalized solution.  We call  $u$ a {\em regular} solution if $u$ is a  generalized solution and
$P_X u=u$.

Let $\hat u= P_X u$ for a generalized solution $u$. 
Since  $P_X \hat u=\hat u$ and $|\hat u|=b$, $\hat u$ is a regular solution.
Let us state this simple fact for easy reference.  
\begin{prop}
\label{prop1'} If $u$ is a generalized solution, 
then $P_X u$ is a regular solution. 
\end{prop}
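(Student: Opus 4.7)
The plan is to show both conditions in the definition of \emph{regular solution} hold for $\hat u := P_X u$, namely (a) $\hat u$ is a generalized solution, and (b) $P_X \hat u = \hat u$. Both will follow from a single structural fact: $P_X$ is an orthogonal projection onto the subspace $X = A\,O \subset \IC^N$, hence idempotent, i.e. $P_X^2 = P_X$. From the explicit formula $P_X = A A^+$ with $A^+ = (A^*A)^{-1}A^*$ given in \eqref{proj}, idempotence is immediate: $P_X^2 = AA^+ AA^+ = A(A^*A)^{-1}(A^*A)(A^*A)^{-1}A^* = AA^+ = P_X$.

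For (b), I would simply apply idempotence: $P_X \hat u = P_X(P_X u) = P_X u = \hat u$. For (a), by definition $u$ being a generalized solution means $|P_X u| = b$, i.e.\ $|\hat u| = b$. Since $P_X\hat u = \hat u$ (from (b)), we also have $|P_X \hat u| = |\hat u| = b$, which is exactly the defining condition for $\hat u$ to be a generalized solution with the projection of $\hat u$ itself in the role of the relevant point. Combined with (b), this means $\hat u$ satisfies both clauses of the definition of regular solution.

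There is essentially no obstacle here; the statement is a one-line corollary of the idempotence of the projector $P_X$ together with the definitions. The only thing one should be mildly careful about is invoking the correct formula $P_X = AA^+$ — which requires $A$ to have full column rank so that $A^*A$ is invertible — a hypothesis that was already imposed earlier in the paper (``Without loss of generality we assume that $A$ has a full rank''). No nontrivial analysis or estimates are required, and the proof should be written out in two or three sentences.
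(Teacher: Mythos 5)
Your proof is correct and matches the paper's own (very brief) argument: the paper likewise observes that $P_X\hat u=\hat u$ by idempotence of the projection and $|\hat u|=b$ by the definition of generalized solution, so $\hat u$ is regular. Your added remark on why $P_X=AA^+$ is idempotent is a harmless elaboration of the same reasoning.
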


The goal of the inverse problem \eqref{pr} is the unique determination of $f$, up to a constant phase factor, from the given data $b$. 
In other words,  uniqueness holds if, and only if, all regular solutions $\hat u$ have the form
\beq
\hat u=e^{\im \alpha} A f
\eeq
or equivalently, any generalized solution $u$ is an element of 
the $(2N-2n^2)$-dimensional manifold
 \beq
\label{gen}\label{unique}
\{e^{\im \alpha}Af-z: P_Xz=0,\,\,  z\in \IC^N,\,\,\alpha\in\IR\}.  
\eeq
In the transform domain, the uniqueness is
characterized by the uniqueness of the regular solution, up to a constant phase factor. 
Geometrically, uniqueness  means that the intersection $X\cap Y$ is a circle (parametrized $e^{\im \alpha}$ times $Af$).

As proved in  \cite{FDR}, when the uniqueness \eqref{unique} holds, the fixed point set of the AAR map \eqref{dr} is exactly the continuum set 
\beq
\label{fixed}
\{u=e^{\im \alpha}Af-z: P_Xz=0,\,\,\sgn(u)=\alpha+\sgn(Af), \,\,z\in \IC^N, \alpha\in \IR\}. 
\eeq
In \eqref{fixed}, the phase relation $\sgn(u)=\alpha+\sgn(Af)$ implies that
$z=\eta\odot \sgn(u),\eta\in \IR^N, b+\eta\ge 0.$ So the set \eqref{fixed} can be written as
\beq
\label{fixed2}
\{e^{\im \alpha} (b-\eta)\odot \sgn(Af):P_X(\eta\odot\sgn(Af))=0,\,\, b+\eta\ge 0,\,\,\eta\in \IR^N,\alpha\in \IR\}, 
\eeq
which is an $(N-2n^2)$ real-dimensional set,  a much larger set than the circle $\{e^{\im \alpha} Af: \alpha \in \IR\}$ for a given $f$. 
\commentout{
In \cite{FDM} the set in \eqref{fixed} is shown to be exactly the fixed point set of
 the Hybrid-Projection-Reflection family of maps which includes AAR as a special case 
\cite{HPR}. In turn,  the Hybrid-Projection-Reflection family is a subset of the Difference Map family which may possess a considerably larger set of fixed points than \eqref{fixed} \cite{Elser, FDM}. 
}
 On the other hand, the fixed point set \eqref{fixed2} is
 $N$-dimension lower than  
 the set  \eqref{gen} of generalized solutions. 
 
 A more intuitive characterization of the fixed points can be obtained by applying $R_X$ to the set \eqref{fixed2}. Since
 \[
 R_X [e^{\im \alpha} (b-\eta)\odot \sgn(Af)] = e^{\im \alpha} (b+\eta)\odot \sgn(Af)
 \]
 amounting to the sign change in front of $\eta$, 
the image set of \eqref{fixed2} under the map $R_X$ is
 \beq
\label{fixed2'}
\{e^{\im \alpha} (b+\eta)\odot \sgn(Af):P_X(\eta\odot\sgn(Af))=0,\,\, b+\eta\ge 0,\,\,\eta\in \IR^N,\alpha\in \IR\}. 
\eeq
The set \eqref{fixed2'} is the fixed point set  of the alternative form of AAR:
\beq
x_{k+1}&=& \half x_k+\half R_X R_Y x_k \label{aar2}
\eeq
in terms of  $x_k:=R_X u_k$. The expression \eqref{fixed2'} says that the fixed points of \eqref{aar2} are generalized solutions with the ``correct" Fourier phase.

However, the boundary points of the fixed point set \eqref{fixed2'} are degenerate in the sense that they have vanishing components, i.e. $(b+\eta)[j]=0$ for some $j$ and   can slow down convergence \cite{Fie86}.  Such points are points of discontinuity of the AAR map \eqref{aar2}
because they are points of discontinuity of $P_Y=b\odot\sgn(\cdot)$  (see also the comment below \eqref{P1}). 
 Indeed, even though AAR converges linearly  
in the vicinity of the true solution, numerical evidence suggests that
globally (starting with a random initial guess) AAR  converges sub-linearly  (cf. \cite{DR-rate2,DR-rate1}). 
Due to the non-uniformity of convergence, the additional step of applying $P_X$ (Proposition \ref{prop0}(i))   at the ``right timing" of the iterated process can jumpstart the geometric convergence regime  \cite{FDR}.

\section{Douglas-Rachford Splitting (DRS)} \label{sec:DRS} 
Douglas-Rachford Splitting (DRS) is an optimization method for solving 
the following minimization problem:
\begin{equation}
\min\limits_{u}K(u)+ L(u)
\label{DRS}
\end{equation}
where the loss functions $ L$ and $K$ represent the data constraint $Y$ and
the object constraint $X$, respectively. 

To deal with the divergence behavior of AAR (Proposition \ref{prop0} (ii)) in the case of, e.g.  noisy data, we consider the Poisson log-likelihood cost functions \cite{ML12,Poisson2}
\beq
\mbox{\rm Poisson:}\quad  L(u) &=&\sum_i |u[i]|^2-b^2[i]\ln |u[i]|^2\label{Poisson}
\eeq
based on the maximum likelihood principle for the Poisson noise model. The Poisson noise is the most prevalent noise in X-ray coherent diffraction. {  There is, however, a disadvantage of working with \eqref{Poisson}, i.e. it has a 
divergent derivative where $u(i)$ vanishes but $b(i)$ does not. This roughness can be softened by considering an asymptotic form }
\beq
\mbox{\rm Gaussian:}\quad   L(u)&=&\half \| |u|- b\|^2 \label{Gaussian}
\eeq
In Appendix \ref{app:likelihood}, we show that 
the Poisson log-likelihood function \eqref{Poisson}  is  asymptotically reducible  to \eqref{Gaussian}. 

With the constraint $u=Ag$, $g$ is a stationary point in the object domain if and only if 
\beqn
g&=&A^* \lt[\sgn(Ag)\odot b\rt]. 
\eeqn
In the noiseless case, $|Af|=b$ and hence $f$ is a stationary point by the isometry of $A$. 
{On the other hand,  with noisy data  there is no regular solution to $|Ax|=b$ with high probability (since $A$ has many more rows than columns)  and  $f$ is unlikely to be a stationary point (since the stationarity equation imposes extra constraints on noise).}

Moreover,  the Hessian of \eqref{Gaussian} at $u=Af$ is positive semi-definite and has
 one-dimensional eigenspace spanned by $\im f$ associated with eigenvalue zero \cite{FDR,ptych-unique,AP-phasing}. 

{  Expanding the loss function \eqref{Gaussian} 
\beq\label{3'}
L(u)&=& \frac{1}{2}\|u\|^2-\sum_{j} b[j]|u[j]|+\frac{1}{2}\|b\|^2 
\eeq   
we see that $L$ has a {\em bounded} sub-differential  where $u[j]$ vanishes but $b[j]$ does not.
There are various tricks to further smooth out \eqref{Gaussian} e.g. by introducing an additional regularization parameter as
\beq
 L(u)&=&\half \| \sqrt{|u|^2+\ep}- \sqrt{b^2+\ep}\|^2,\quad\ep>0 \label{Gaussian'}
\eeq
(see e.g.  \cite{March19}). }

Besides the Poisson noise,
a type of noise due to interference from multiple scattering can be modeled as  complex circularly-symmetric Gaussian noise, resulting in the signal model
\beq
\label{ray}
b&=&|Af+\eta|
\eeq
where $\eta$ is a complex circularly-symmetric Gaussian noise. 
Squaring the expression, we obtain
\beq
b^2&=& |A f|^2+|\eta|^2+2\Re(\overline{\eta}\odot Af)\nn
\eeq

Suppose $|\eta|\ll |Af|$ so that $|\eta|^2 \ll 2\Re(\overline{\eta}\odot Af)$.
Then 
\beq
b^2&\approx &   |Af|^2+2\Re(\overline{\eta}\odot Af).\label{23-1}
\eeq
Eq. \eqref{23-1} says that at the photon counting level, the noise appears additive and Gaussian but with variance
proportional to $|Af|^2$, the Poisson noise in the asymptotic regime discussed in Appendix \ref{app:likelihood}. Therefore 
the loss function \eqref{Gaussian} is suitable for this case too.

The maximum likelihood scheme is a variance stabilization scheme which uniformizes the probability distribution for every pixel regardless of the measured intensity value \cite{noise3}. 
 See \cite{noise,noise2} for more choices of loss functions.

The amplitude-based Gaussian loss function
 \eqref{Gaussian} is well known to outperforms the
intensity-based loss function  $\half \| |u|^2- b^2\|^2$, even though the latter is more smooth \cite{Waller15}. Due to the non-differentiability of both $K$ and $ L$, the global convergence property
of the proposed DRS optimization is beyond the current framework of analysis \cite{LP}. 
The ptychographic iterative engines, PIE \cite{PIE104, PIE05, PIE204}, ePIE \cite{ePIE09} and rPIE \cite{rPIE17},  are also related to the mini-batch gradient method for the amplitude-based cost function \eqref{Gaussian}.

For $K$, 
we let $K(u)$ be the indicator function of the range of $A$, i.e. a ``hard" constraint.  

When the corresponding feasibility problem is consistent (feasible), there exist $u\in \IC^N$ such that
$|u|=b$ and $u=Ag$ for some $g\in \IC^{n^2}$, which are exactly the global minimizers of \eqref{DRS}, realizing the minimum value 0,  as well as the regular solutions defined in Section \ref{sec:solution}. 

When the corresponding feasibility problem is inconsistent (infeasible), the minimum value of \eqref{DRS} is unknown and  the global minimizers are harder to characterize.

DRS is based on the proximal operator which is a generalization of projection.
The proximal  point relative to a function $G$ is given by 
\[
\prox_{G}(u):= \mathop{\text{argmin}}\limits_{x} G(x)+\frac{1}{2}\| x-u\|^2.
\]
With the  loss functions \eqref{Poisson} or \eqref{Gaussian}, $P_X$ is replaced by 
$P_{K/\rho}$ and $P_Y$ by $P_{ L/\rho}$, respectively, with the step size $\gamma=1/\rho$. 
The 3-step procedure \eqref{dr2}-\eqref{dr2'} is replaced by 
\beq
\label{drs}
v_{l} &= &\prox_{K/\rho}(u_l); \\
w_{l} &= &\prox_{ L/\rho}(2v_{l}-u_l)\\
u_{l+1}&=& u_l+ w_{l}-v_{l}\label{drs'}
\eeq
 for $l=1,2,3\ldots$.

For convex optimization,  DRS \eqref{drs}-\eqref{drs'} is equivalent to
the Alternating Direction Method of Multipliers (ADMM) applied to the dual problem to \eqref{DRS}.
In Appendix \ref{sec:noisy-admm}, we show that for phase retrieval they are essentially equivalent
to each other.

For our choice of $K$, $ \prox_{K/\rho}(u)=P_X u=AA^+ u$ is independent of $\rho$. This
should be contrasted with the choice of the more smooth distance function adopted in \cite{LP} for the tractability of convergence analysis
(see more discussion in Section \ref{sec:last}). 

If we define the reflector $\cR_Y$ corresponding to $\prox_{ L/\rho}(u)$ as
\beq
\label{180}
\cR_Yu: =2\,\, \prox_{ L/\rho}(u)-u,
\eeq
then we can write the system \eqref{drs}-\eqref{drs'} as
\beq
\label{DRS1}
u_{k+1}=\half u_k+\half \cR_Y R_X u_k
\eeq
which is equivalent to 
\beq
\label{DRS2}
x_{k+1}=\half x_k+\half R_X \cR_Y x_k
\eeq
in terms of $x_k:=R_X u_k$. In other words, the order of carrying out 
$\prox_{ L/\rho}$ and $\prox_{K/\rho}$ does not matter in the current
DRS set-up.

For the Gaussian loss function \eqref{Gaussian}, the proximal point can be explicitly
derived 
\beqn
\prox_{L/\rho}(u) &=& \frac{1}{\rho+1}b\odot\sgn{(u)}+\frac{\rho}{\rho+1}u\\
&=& \frac{1}{\rho+1}(b+\rho|u|)\odot\sgn{(u)}, 
\eeqn
an averaged projection with the relaxation parameter $\rho$. Now we are ready to give
 the most compact and explicit representation of the Gaussian DRS map:
 \beq\label{G1}
u_{k+1} &=& \frac{u_k}{\rho+1} + \frac{\rho-1}{\rho+1}P_X u_{k}+\frac{1}{\rho+1}P_Y R_X u_k\\
&:=&\Gamma (u_k)\nn
\eeq 
which can be compared with AAR in the form \eqref{3step}.

For the Poisson case the DRS map has a more complicated form 
\beq	\label{P1}
		u_{k+1}& =&
		\half u_k-{1\over \rho+2} R_X u_k+
		      \frac{\rho}{2(\rho+2)}\lt[|R_Xu_{k}|^2+\frac{8(2+\rho)}{\rho^2}b^2\rt]^{1/2}\odot \sgn{\Big(R_X u_{k}\Big)}\\
		      &:=&\Pi(u_k)\nn
	\eeq
	where $b^2$ is the vector with component $b^2[j]=(b[j])^2$ for all $j$. 
	
Note that $\Gamma(u)$  and $\Pi(u)$ are continuous except where $R_X u$ vanishes but $b$ does not due to
arbitrariness of  the value of the $\sgn$ function at zero.

After the iteration is terminated with the terminal vector $u_*$, the object estimate is obtained by
\beq
f_*= A^+ u_*.\label{49'}
\eeq

We shall refer to DRS with the Poisson log-likelihood function \eqref{P1} and the Gaussian version \eqref{G1} by {\em Poisson-DRS} and {\em Gaussian-DRS}, respectively. The computation involved in Gaussian-DRS and Poisson-DRS are mostly pixel-wise operations
(hence efficient) except for the pseudo-inverse $A^+$ which can be computed efficiently (see Appendix \ref{app:matrix}). 

In the limiting case of $\rho= 0$, both Gaussian-DRS and Poisson-DRS  become the AAR algorithm.

\section{Fixed points}\label{sec:fixed}

For simplicity of presentation, we  shall focus on the case of the Gaussian DRS. 

 By definition, all  fixed points $u$  satisfy the equation 
\beq\label{sa}
u&=& \Gamma(u)
\eeq
and hence after some algebra by \eqref{G1} 
\beq
 \label{sa'} P_X u+\rho  P^\perp_X u=b\odot \sgn(R_X u). 
\eeq

The main result of this section is that the iteration of $\Gamma$ always produces a sequence bounded in norm
by 
\[
{\|b\|\over \min\{\rho, 1\}}\quad \mbox{\quad for  \quad} \quad \rho>0
\]
 (Theorem \ref{thm:bounded}) with  slightly better bounds on the fixed points (Corollary \ref{cor:norm}). Therefore, Gaussian-DRS is free of the divergence problem associated with AAR in the infeasible case.

It is often convenient to perform the  analysis in terms of the pair of variables $u$ and $x:=R_X u$. 
Here are some basic relations between $u$ and $x$. 
\begin{prop}
\label{prop3}
For any $u\in \IC^N$, $x:=R_X u$ satisfies 
\beqn
u =R_X x,& P_X u=P_X x, &P^\perp_X u=- P^\perp_Xx. 
\eeqn
\end{prop}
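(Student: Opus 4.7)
The proof will rely only on the fact that $P_X$ is the orthogonal projection onto the linear subspace $X = AO$, so $P_X^2 = P_X$ and $P_X^\perp := I - P_X$. From the definition $R_X = 2P_X - I$, the key algebraic fact is that $R_X$ is an involution: $R_X^2 = (2P_X - I)^2 = 4P_X^2 - 4P_X + I = I$, where idempotency is used in the last step. This immediately yields the first identity $u = R_X(R_X u) = R_X x$.

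For the second identity, I would compute directly:
\[
P_X x = P_X(2P_X u - u) = 2P_X^2 u - P_X u = 2P_X u - P_X u = P_X u,
\]
again invoking $P_X^2 = P_X$. For the third identity, I would use $P_X^\perp = I - P_X$ together with the previous calculation:
\[
P_X^\perp x = x - P_X x = (2P_X u - u) - P_X u = P_X u - u = -P_X^\perp u.
\]

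All three claims therefore reduce to one-line computations using only the idempotency of the orthogonal projection onto the fixed linear subspace $X$. There is no substantive obstacle here; the only conceptual point worth emphasizing in the write-up is that $X = AO$ is a linear subspace of $\mathbb{C}^N$ (so $P_X$ is a genuine linear orthogonal projection and $R_X$ is its unitary reflection), in contrast with the nonlinear situation for $P_Y$ and $R_Y$. This linearity is what makes the three relations purely algebraic.
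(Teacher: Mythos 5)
Your proof is correct and follows essentially the same route as the paper's: direct one-line computations from $R_X = 2P_X - I$ and the idempotency $P_X^2 = P_X$. The only cosmetic difference is that you obtain the first identity from the involution $R_X^2 = I$, whereas the paper expands $R_X x = 2P_X x - x$ directly; the second and third identities are verbatim the paper's calculations.
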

\begin{proof}
First note that 
\[
R_X x=2P_X x-x=2P_X u-(2P_X u-u)=u. 
\]
Moreover, 
\beqn
P_X x=P_XR_X u=P_X (2P_X u-u)=2P_X u-P_X u=P_X u.
\eeqn
and 
\beqn
P_X^\perp x=x-P_X x=2P_X u-u-P_X x= 2P_X u-u-P_X u=P_X u-u=-P_X^\perp u.
\eeqn
\end{proof}

\begin{prop}
\label{prop00}
Any $u\in \IC^N$ is a generalized solution if and only if  $x:=R_X u$ is a generalized solution.
\end{prop}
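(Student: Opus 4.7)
The plan is to reduce the claim directly to Proposition \ref{prop3}. Recall from Section \ref{sec:solution} that $u$ is a generalized solution precisely when $|P_X u| = b$. So the statement to prove is the equivalence
\[
|P_X u| = b \iff |P_X x| = b, \quad \text{where } x = R_X u.
\]

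First I would invoke Proposition \ref{prop3}, which gives the identity $P_X x = P_X u$ for $x := R_X u$. Taking componentwise moduli of both sides yields $|P_X x| = |P_X u|$, so the two conditions $|P_X u| = b$ and $|P_X x| = b$ are literally the same equation. This establishes both directions of the equivalence simultaneously.

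There is no real obstacle here: the statement is essentially a one-line corollary of the algebraic identity $P_X R_X = P_X$, which itself follows from $R_X = 2P_X - I$ together with the projection property $P_X^2 = P_X$. The only thing worth noting in the write-up is that "generalized solution" is a condition on $P_X$ of the vector alone, so once $P_X u = P_X x$ is in hand, the rest is immediate and symmetric in $u$ and $x$ (indeed, Proposition \ref{prop3} also gives $u = R_X x$, so the roles of $u$ and $x$ are interchangeable under $R_X$, confirming that neither direction of the equivalence requires separate argument).
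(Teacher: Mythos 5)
Your proposal is correct and follows exactly the paper's route: both reduce the claim to the identity $P_X x = P_X u$ from Proposition \ref{prop3}, after which the two conditions $|P_X u|=b$ and $|P_X x|=b$ coincide. No further comment is needed.
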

\begin{proof}
If $u$ is a generalized solution, then $P_X u=P_X x$ by Proposition \ref{prop3}. Now that
$x$ is a generalized solution, the converse is also true by the same argument. 

\end{proof}

\begin{prop}
\label{prop1} If $u$ is a generalized solution, 
then $P_X u$ is a regular solution and a fixed point. 
\end{prop}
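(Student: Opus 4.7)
The first assertion, that $P_X u$ is a regular solution, is exactly Proposition \ref{prop1'}, so the only new content is showing that $\hat u := P_X u$ is a fixed point of the Gaussian-DRS map $\Gamma$.

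My plan is to verify the fixed point equation directly by substituting $\hat u$ into the explicit formula \eqref{G1} for $\Gamma$. The key observations to collect are:
\begin{enumerate}
\item $P_X$ is a projection, so $P_X \hat u = P_X^2 u = P_X u = \hat u$, which gives $R_X \hat u = 2P_X\hat u - \hat u = \hat u$.
\item Because $u$ is a generalized solution, $|\hat u| = |P_X u| = b$, so $P_Y \hat u = b \odot \sgn(\hat u) = |\hat u| \odot \sgn(\hat u) = \hat u$.
\end{enumerate}

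Combining these, each of the three terms in
\[
\Gamma(\hat u) = \frac{\hat u}{\rho+1} + \frac{\rho-1}{\rho+1} P_X \hat u + \frac{1}{\rho+1} P_Y R_X \hat u
\]
collapses to a multiple of $\hat u$: namely $\tfrac{1}{\rho+1}\hat u$, $\tfrac{\rho-1}{\rho+1}\hat u$, and $\tfrac{1}{\rho+1}\hat u$, whose sum equals $\hat u$. Hence $\Gamma(\hat u) = \hat u$, so $\hat u$ is a fixed point.

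Alternatively, and equivalently, one can verify the characterization \eqref{sa'} of fixed points: with $u$ replaced by $\hat u$, the left side becomes $P_X \hat u + \rho P_X^\perp \hat u = \hat u$, while the right side is $b \odot \sgn(R_X \hat u) = b \odot \sgn(\hat u) = \hat u$, again using $P_X^\perp \hat u = 0$, $R_X \hat u = \hat u$, and $|\hat u| = b$. There is really no obstacle here; the proof is a short bookkeeping exercise, and the only point requiring care is making sure we invoke the generalized-solution hypothesis in the correct form $|P_X u| = b$ rather than $|u| = b$.
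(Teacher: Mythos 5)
Your proof is correct and follows essentially the same route as the paper: invoke Proposition \ref{prop1'} for the regular-solution part, then verify $\Gamma(\hat u)=\hat u$ by direct substitution using $P_X\hat u=\hat u$, $R_X\hat u=\hat u$, and $b\odot\sgn(\hat u)=\hat u$. The paper writes $\Gamma$ in the equivalent form $\tfrac12 u+\tfrac12\cR_YR_Xu$ rather than \eqref{G1}, but the computation is the same bookkeeping.
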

\begin{proof} Let $\hat u= P_X u$. 
By Proposition \ref{prop1'} $\hat u$ is a regular solution.
Moreover $\Gamma(u)$ becomes
\beqn
 \frac{1}{2} \hat u+ \frac{\rho-1}{2(\rho+1)}\hat u +\frac{1}{\rho+1}b\odot \sgn (\hat u)
 &=&  \frac{1}{2} \hat u+ \frac{\rho-1}{2(\rho+1)}\hat u +\frac{1}{\rho+1} \hat u 
 \eeqn
 which equals $\hat u$. Therefore $\hat u$ is a fixed point. 
\end{proof}

\begin{prop}\label{prop2}
 Suppose $P_X u=u$.  Then 
$u$ is a regular solution if, and only if,  $u$ is a fixed point. 

\end{prop}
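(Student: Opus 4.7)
The plan is to substitute the hypothesis $P_X u = u$ directly into the Gaussian-DRS map \eqref{G1} and reduce the fixed point equation to the identity $u = b\odot\sgn(u)$, which is equivalent to $|u|=b$.

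First I would compute, under the assumption $P_X u = u$, that $P_X^\perp u = 0$ and hence
\[
R_X u = 2 P_X u - u = u, \qquad P_Y R_X u = P_Y u = b\odot\sgn(u).
\]
Plugging these into the explicit form \eqref{G1} of $\Gamma$ collapses the three terms into
\[
\Gamma(u) = \frac{1}{\rho+1}u + \frac{\rho-1}{\rho+1}u + \frac{1}{\rho+1}b\odot\sgn(u) = \frac{\rho}{\rho+1}u + \frac{1}{\rho+1}b\odot\sgn(u).
\]
The fixed point equation $u = \Gamma(u)$ is therefore equivalent, after multiplying by $\rho+1$ and cancelling, to
\[
u = b\odot\sgn(u).
\]

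Next I would observe that this componentwise identity is equivalent to $|u|=b$. Indeed, for indices $j$ with $u[j]\neq 0$ it gives $|u[j]|=b[j]$ directly; for indices with $u[j]=0$ the convention $\sgn(0)=1$ in the excerpt forces $b[j]=0=|u[j]|$. Combined with the standing assumption $P_X u = u$, the relation $|u|=b$ is precisely the definition of a regular solution (Section \ref{sec:solution}), establishing both implications simultaneously: $u$ is a fixed point iff $|u|=b$ iff (given $P_X u = u$) $u$ is a regular solution.

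There is no real obstacle here; this is essentially a bookkeeping argument that exploits the simplification $R_X u = u$ on the subspace $\mathrm{range}(P_X)$. The only subtle point worth noting explicitly is the treatment of components where $u[j]=0$, which is handled cleanly by the convention for $\sgn$ adopted at the start of Section \ref{sec:solution}. One could equivalently derive the same conclusion from the fixed point characterization \eqref{sa'}, since $P_X^\perp u = 0$ reduces it immediately to $u = b\odot\sgn(R_X u) = b\odot\sgn(u)$.
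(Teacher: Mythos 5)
Your proposal is correct and follows essentially the same route as the paper: substitute $P_Xu=u$ (hence $R_Xu=u$) into the Gaussian-DRS map, collapse it to $\Gamma(u)=\tfrac{\rho}{\rho+1}u+\tfrac{1}{\rho+1}b\odot\sgn(u)$, and observe that the fixed point equation is then equivalent to $u=b\odot\sgn(u)$, i.e.\ $|u|=b$. Your explicit handling of the components where $u[j]=0$ is a small additional care not spelled out in the paper's own proof, but the argument is the same.
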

\begin{proof}
Under the assumption $P_X u=u$, $u=R_X u$ and $\Gamma(u)$ becomes
\beq
\label{381}
\half u+{\rho-1\over 2(\rho+1)} u +\frac{1}{\rho+1}b\odot \sgn \big(u)={\rho\over 1+\rho} u+\frac{1}{\rho+1}b\odot \sgn \big(u).
\eeq
Therefore, if $u$ is a fixed point,  then \eqref{sa'} implies 
\[
u=b\odot\sgn(u)
\]
and hence $|u|=b$, i.e. $u$ is a regular solution. 

On the other hand, if $|u|=b$, then the right hand side of \eqref{381} becomes
\[
{\rho\over 1+\rho} u+\frac{1}{\rho+1}b\odot \sgn \big(u)={\rho\over 1+\rho} u+\frac{1}{\rho+1}u
=u
\]
implying that $u$ is a fixed point.

\end{proof}

Writing 
\[
I=P_X+P^\perp_X\quad\mbox{and}\quad R_X=P_X-P^\perp_X, 
\]
and using Proposition \ref{prop3}
we can put the Gaussian-DRS map and the fixed point equation in the following forms.
\begin{prop}
\label{prop4}
The Gaussian-DRS map $\Gamma$  is equivalent to
\beq
\label{px}
\px u_{k+1}&=& {\rho\over \rho+1} \px u_k+{1\over \rho+1} \px\py x_k\\
\pxp u_{k+1}&=&{1\over \rho+1} \pxp u_k+{1\over \rho+1} \pxp\py x_k\label{pxp}
\eeq
where $x_k:=\rx u_k$. 
Therefore any fixed point $u$ satisfies
\beq
\label{px1}
\px x&=& \px\py x\\
-\rho \pxp x&=&  \pxp\py x,\label{pxp1}
\eeq
where $x:=\rx u$, or equivalently 
\beq
\label{32}
P_X x-\rho P^\perp_X x&=& b\odot\sgn(x)\\
\label{30}
P_X x+\rho P^\perp_X x & = & R_X \lt(b\odot \sgn{(x)}\rt).  
\eeq

\end{prop}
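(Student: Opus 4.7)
The plan is to derive the two update equations \eqref{px}--\eqref{pxp} by splitting \eqref{G1} through the orthogonal projections $P_X$ and $P_X^\perp$, and then obtain the fixed-point equations by enforcing $u_{k+1}=u_k=u$ together with the identities from Proposition \ref{prop3}.

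First I would apply $P_X$ to both sides of \eqref{G1}. Using $P_X^2=P_X$, the term $\frac{\rho-1}{\rho+1}P_X u_k$ is left invariant by $P_X$, and $\tfrac{1}{\rho+1}P_X u_k$ is contributed by the first summand $u_k/(\rho+1)$. Combining the two gives the prefactor $\frac{1}{\rho+1}+\frac{\rho-1}{\rho+1}=\frac{\rho}{\rho+1}$ in front of $P_X u_k$, while $\tfrac{1}{\rho+1}P_Y R_X u_k$ becomes $\tfrac{1}{\rho+1}P_X P_Y x_k$. This yields \eqref{px}. Applying $P_X^\perp$ is even shorter: it annihilates the middle term $\frac{\rho-1}{\rho+1}P_X u_k$, and what remains is $\frac{1}{\rho+1}P_X^\perp u_k+\frac{1}{\rho+1}P_X^\perp P_Y x_k$, giving \eqref{pxp}.

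For the fixed-point equations, I substitute $u_{k+1}=u_k=u$ and $x_k=x:=R_X u$. From \eqref{px}, the coefficient of $P_X u$ rearranges to $\frac{1}{\rho+1}P_X u=\frac{1}{\rho+1}P_X P_Y x$, so $P_X u=P_X P_Y x$; Proposition \ref{prop3} gives $P_X u=P_X x$, producing \eqref{px1}. From \eqref{pxp}, the analogous rearrangement yields $\rho P_X^\perp u=P_X^\perp P_Y x$, and Proposition \ref{prop3} gives $P_X^\perp u=-P_X^\perp x$, producing \eqref{pxp1}. Finally, adding \eqref{px1} and \eqref{pxp1} gives
\[
P_X x-\rho P_X^\perp x = (P_X+P_X^\perp)P_Y x = P_Y x = b\odot\sgn(x),
\]
which is \eqref{32}; while combining them through $R_X=P_X-P_X^\perp$ gives
\[
P_X x+\rho P_X^\perp x = (P_X-P_X^\perp)P_Y x = R_X\bigl(b\odot\sgn(x)\bigr),
\]
which is \eqref{30}.

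No step here looks like a real obstacle; the proof is essentially bookkeeping. The only mildly delicate point is keeping track of the sign flip $P_X^\perp u=-P_X^\perp x$ supplied by Proposition \ref{prop3}, which is precisely what produces the factor $-\rho$ (rather than $+\rho$) on the left side of \eqref{pxp1} and, consequently, the appearance of $R_X$ on the right side of \eqref{30}.
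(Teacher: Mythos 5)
Your proof is correct and follows exactly the route the paper intends: the paper states Proposition \ref{prop4} without a written proof, merely prefacing it with the decomposition $I=P_X+P_X^\perp$, $R_X=P_X-P_X^\perp$ and an appeal to Proposition \ref{prop3}, which is precisely the bookkeeping you carry out. Your handling of the sign flip $P_X^\perp u=-P_X^\perp x$ is the one genuinely non-trivial point, and you get it right.
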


Next we show that  the Gaussian-DRS map $\Gamma$ with $\rho>0$ always produces a bounded iterated sequence, in contrast to the divergence behavior of AAR given in Proposition \ref{prop0} (ii). 
\begin{thm}\label{thm:bounded} \commentout{\red Suppose that $P_X$ is an orthogonal projection. }
Let $u_{k+1}:=\Gamma (u_k), \,\,k\in \IN,$ and $x_k:=R_X u_k$. Then,  for $\rho>0$, $\{u_k\}$ and $\{x_k\}$ are  bounded sequences. Moreover,
\beq\label{210}
\limsup_{k\to\infty} \|u_k\|= \limsup_{k\to\infty} \|x_k\|\le {\|b\|\over \min\{\rho, 1\}}&\mbox{\quad for  \quad}& \rho>0
\eeq
and hence  all fixed points $u$ satisfy
\beq
\label{217}
\|u\|\le {\|b\|\over \min\{\rho, 1\}}&\mbox{\quad for \quad}& \rho>0. 
\eeq
\end{thm}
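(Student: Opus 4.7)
The plan is to work with the averaged-reflection form \eqref{DRS1}, $u_{k+1}=\half u_k + \half \cR_Y R_X u_k$, rather than with the $\px\oplus\pxp$ decomposition of Proposition~\ref{prop4}. The latter gives only the componentwise bounds $\limsup_k\|\px u_k\|\le\|b\|$ and $\limsup_k\|\pxp u_k\|\le\|b\|/\rho$, which combine via Pythagoras to at most $\|b\|\sqrt{1+\rho^{-2}}$; this is strictly larger than $\|b\|/\min\{\rho,1\}$ for every $\rho>0$, so a sharper estimate is required. My approach is to bound $\|\cR_Y u\|$ as a single Euclidean norm.

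Using the closed form $\prox_{L/\rho}(u)=(\rho+1)^{-1}(b+\rho|u|)\odot\sgn(u)$, the definition \eqref{180}, and the identity $u=|u|\odot\sgn(u)$, a short calculation gives
\begin{equation*}
\cR_Y u=\frac{2b+(\rho-1)|u|}{\rho+1}\odot\sgn(u),
\end{equation*}
so applying the $\ell^2$ triangle inequality to the real vector $2b+(\rho-1)|u|$ yields
\begin{equation*}
\|\cR_Y u\|\le \frac{2\|b\|+|\rho-1|\,\|u\|}{\rho+1}.
\end{equation*}
Since $R_X=2\px-I$ is a unitary reflection, $\|R_X u_k\|=\|u_k\|$, and substituting into \eqref{DRS1} together with one more triangle inequality produces the affine recursion
\begin{equation*}
\|u_{k+1}\|\le \alpha\|u_k\|+\beta,\qquad \alpha=\frac{\rho+1+|\rho-1|}{2(\rho+1)},\quad \beta=\frac{\|b\|}{\rho+1}.
\end{equation*}

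A case split collapses the contraction rate to $\alpha=\rho/(\rho+1)$ when $\rho\ge 1$ and $\alpha=1/(\rho+1)$ when $\rho\le 1$, so $0<\alpha<1$ whenever $\rho>0$ and in both cases $\beta/(1-\alpha)=\|b\|/\min\{\rho,1\}$. Iterating the recursion then gives boundedness of $\{u_k\}$ together with the bound $\limsup_k\|u_k\|\le\|b\|/\min\{\rho,1\}$; the equality $\limsup_k\|u_k\|=\limsup_k\|x_k\|$ and the transfer of the bound to $\{x_k\}$ are immediate from $\|x_k\|=\|u_k\|$. For a fixed point, $u=\Gamma(u)$ forces $\|u\|\le\alpha\|u\|+\beta$, which rearranges to \eqref{217}. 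The main technical point I expect to require care is the explicit evaluation of $\cR_Y$ and keeping the estimate inside a single $\ell^2$ triangle inequality — the intuitive $\px\oplus\pxp$ split produces strictly worse constants and so cannot reach the stated bound.
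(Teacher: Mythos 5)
Your proof is correct, and it reaches the paper's conclusion by a somewhat different route to the same one-step estimate. The paper works from the $P_X\oplus P_X^\perp$ form of the iteration (Proposition \ref{prop4}): it recombines the two component equations into $u_{k+1}=\bigl(\tfrac{\rho}{\rho+1}P_X+\tfrac{1}{\rho+1}P_X^\perp\bigr)u_k+\tfrac{1}{\rho+1}P_Yx_k$, evaluates the norm of the diagonal part exactly by Pythagoras to get the operator bound $\max\{\rho,1\}/(\rho+1)$, and bounds $\|P_Yx_k\|\le\|b\|$ — so it does \emph{not} commit the lossy fully-decoupled estimate you warn against in your opening paragraph (your observation that the naive componentwise-limsup-then-Pythagoras version only yields $\|b\|\sqrt{1+\rho^{-2}}$ is accurate, but the paper sidesteps it by recombining before taking norms). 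You instead stay with the averaged-reflection form \eqref{DRS1}, compute $\cR_Yu=\tfrac{1}{\rho+1}\bigl(2b+(\rho-1)|u|\bigr)\odot\sgn(u)$ explicitly, and use only that $\sgn(u)$ is unimodular and that $R_X$ is an isometry. Both derivations land on the identical affine recursion $\|u_{k+1}\|\le\tfrac{\max\{\rho,1\}}{\rho+1}\|u_k\|+\tfrac{\|b\|}{\rho+1}$ (your $\alpha=\tfrac{\rho+1+|\rho-1|}{2(\rho+1)}$ is exactly $\max\{\rho,1\}/(\rho+1)$), hence the same rate and the same limit $\|b\|/\min\{\rho,1\}$; the iteration and the transfer to $\{x_k\}$ and to fixed points are handled the same way in both (your direct rearrangement of $\|u\|\le\alpha\|u\|+\beta$ for a fixed point is a minor, equally valid variant of the paper's deduction of \eqref{217} from \eqref{210}). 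What your version buys is that it never invokes the orthogonal-decomposition structure of $X$ beyond $R_X$ being an isometry, so it would carry over verbatim to any constraint whose reflector is norm-preserving; what the paper's version buys is that the $P_X/P_X^\perp$ bookkeeping is already set up in Proposition \ref{prop4} and is reused immediately in Corollary \ref{cor:norm}.
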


\begin{proof}
Since $P_X$ is an orthogonal projection, we have 
\[
\|x_k\|=\|u_k\|=\sqrt{\|P_Xx_k\|^2+\|P_X^\perp x_k\|^2}. 
\]

By Proposition \ref{prop4} we then have the estimates
\beq
\label{212} \|u_{k+1}\|
&\le& \|{1\over \rho+1} P^\perp_X u_k+{\rho\over \rho+1}P_X u_k\|+{1\over \rho+1} \|P_Y x_k\|\\
&=& \lt[{1\over (\rho+1)^2}\|P^\perp_X u_k\|^2+{\rho^2\over (\rho+1)^2} \|P_X u_k\|^2\rt]^{1/2}+
{1\over \rho+1} \|P_Y x_k\|\nn\\
&\le& {\max\{\rho, 1\}\over \rho+1} \|u_k\|+{1\over \rho+1} \|b\|. \nn
\eeq

Hence, iterating \eqref{212} for $\rho\ge 1$ we obtain 
\beqn
\|u_{k+1}\|
&\le& {\rho^k\over (\rho+1)^k}\|u_1\|+{ \|b\|\over \rho+1}\sum_{j=0}^{k-1} {\rho^j\over (1+\rho)^j}
\eeqn
and,  after passing to the limit, the upper bound \eqref{210}.

On the other hand, for $\rho<1$,
\beqn
\|u_{k+1}\|&\le& {1\over (\rho+1)^k}\|u_1\|+\|b\|\sum_{j=1}^k {1\over (\rho+1)^j}
\eeqn
implying \eqref{210}. 
\end{proof}

We can improve  \eqref{217} slightly by Proposition \ref{prop4}. 
\begin{cor}
\label{cor:norm}
\commentout{\red Suppose $P_X$ is an orthogonal projection.}  For any fixed point $u$,  let $x:=R_X u$. Then 
\beq\label{270'}
\|u\|=\|x\| <\|b\|&\mbox{\quad if\quad }& \rho>1
\eeq
and
\beq\label{271}
\|b\|< \|u\|=\|x\|\le  \|b\|/\rho &\mbox{\quad if \quad}& \rho\in (0,1)
\eeq
unless $P_X x=x$  (or equivalently $P_X u=u$), in which case $u=x$ is a regular solution. 

On the other hand, for $\rho=1$, $\|u\|=\|x\|=\|b\|$ for any fixed point $u$.  
\end{cor}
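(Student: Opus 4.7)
The plan is to leverage the decomposition of the fixed-point equation provided by Proposition~\ref{prop4} and exploit the fact that $R_X$ is an isometry (so $\|u\|=\|x\|$ automatically) together with the observation that $P_Y$ preserves norm up to the value $\|b\|$.

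First I would note that $|\sgn(\cdot)|\equiv 1$ componentwise (recall $\sgn(0)=1$), so
\[
\|P_Y x\|=\|b\odot\sgn(x)\|=\|b\|.
\]
Next I would take the two scalar identities from Proposition~\ref{prop4},
\[
P_X x = P_X P_Y x, \qquad -\rho P_X^\perp x = P_X^\perp P_Y x,
\]
and apply the Pythagorean identity (valid because $P_X$ is an orthogonal projection, so $P_X$ and $P_X^\perp$ have orthogonal ranges). This yields the master identity
\[
\|b\|^2 \;=\; \|P_X P_Y x\|^2+\|P_X^\perp P_Y x\|^2 \;=\; \|P_X x\|^2+\rho^2\|P_X^\perp x\|^2.
\]
Subtracting from $\|x\|^2=\|P_X x\|^2+\|P_X^\perp x\|^2$ gives the one-line key formula
\[
\|x\|^2-\|b\|^2 \;=\; (1-\rho^2)\,\|P_X^\perp x\|^2,
\]
from which all three cases fall out: for $\rho>1$ the right side is $\le 0$ (strictly negative unless $P_X^\perp x=0$), for $\rho\in(0,1)$ it is $\ge 0$ (strictly positive unless $P_X^\perp x=0$), and for $\rho=1$ it vanishes identically, giving $\|x\|=\|b\|$.

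For the upper bound $\|x\|\le \|b\|/\rho$ when $\rho<1$, I would use the same master identity in the reverse direction: $\|b\|^2=\|P_X x\|^2+\rho^2\|P_X^\perp x\|^2\ge \rho^2(\|P_X x\|^2+\|P_X^\perp x\|^2)=\rho^2\|x\|^2$. The identity $\|u\|=\|x\|$ then follows from Proposition~\ref{prop3} since $R_X$ preserves the splitting $P_X\oplus P_X^\perp$ up to a sign and hence preserves norm.

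Finally, the ``unless'' clause: if $P_X^\perp x=0$, then the two Proposition~\ref{prop4} identities give $P_X^\perp P_Y x=0$ and $P_Y x=P_X x=x$, so $x=b\odot\sgn(x)$, i.e.\ $|x|=b$, which makes $x$ a regular solution; moreover by Proposition~\ref{prop3}, $P_X u=P_X x=x=u$ and $u=R_X x=x$. I do not anticipate any real obstacle here — the content is essentially a one-line Pythagorean computation plus case analysis; the only thing to be careful about is to package the strict vs.\ non-strict inequalities in lockstep with the condition $P_X x=x$.
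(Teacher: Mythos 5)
Your proof is correct and follows essentially the same route as the paper: both hinge on taking norms in the fixed-point identity $P_X x-\rho P_X^\perp x=b\odot\sgn(x)$ and using the Pythagorean theorem to get $\|P_Xx\|^2+\rho^2\|P_X^\perp x\|^2=\|b\|^2$, then splitting into cases according to the sign of $1-\rho^2$ and whether $P_X^\perp x=0$. The one small (and welcome) difference is that you obtain the bound $\|x\|\le\|b\|/\rho$ for $\rho\in(0,1)$ directly from the same identity, whereas the paper imports it from the iteration bound \eqref{217} of Theorem \ref{thm:bounded}; your version is self-contained and equally valid.
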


\begin{proof}
By \eqref{32} and that $P_X$ is an orthogonal projection, we have
\beq
\label{321}
\|P_X x\|^2+\rho\|P^\perp_X x\|^2=\|b\|^2
\eeq
which implies 
\beq
\label{272}
\|u\|=\|x\|\lt\{\begin{matrix}
<\|b\|&\mbox{for}& \rho>1\\
>\|b\|&\mbox{for}& \rho<1
\end{matrix}\rt.\quad \mbox{ if \quad $\|P^\perp_X x\|\neq 0$. }
\eeq

If  $\|P^\perp_X x\|=0$, then $x=P_X x$ and \eqref{32} becomes $x=b\odot \sgn(x)$, implying $|x|=b$.  Likewise, $x=P_X x$
implies that $u=x$. 

Moreover, by \eqref{217}, $\|u\|=\|x\|\le \|b\|/\rho$ for $\rho\in (0,1)$. Hence \eqref{272} can be
further strengthened to the statement \eqref{270'}-\eqref{271}. 

For $\rho=1$, \eqref{321} implies that $\|x\|=\|b\|$. 
\end{proof}

In Appendix \ref{app:Poisson} we give a perturbation analysis 
for the similar result in the Poisson case with small $\rho$. 
\section{Stability analysis}\label{sec:stab}

When the uniqueness \eqref{unique} holds,  the fixed point set of AAR ($\rho=0$) is explicitly given in \eqref{fixed}. 
For $\rho>0$, the fixed point set is much harder to
characterize explicitly. Instead, we show that the desirable fixed points (i.e. regular solutions) are
automatically distinguished from the other non-solutional fixed points by their stability type. 

We say that a fixed point is
{\em   attracting}  if the spectral radius of the sub-differential map is at most 1 and {\em non-attracting} if otherwise.
Because a constant phase factor is an inherent ambiguity, any reasonable iterative map has at least one-dimensional {\em center manifold}.
We say that a fixed point is {\em strictly attracting} if the center manifold is one-dimensional, i.e. a positive spectral gap
between the second singular value of the sub-differential map and 1 (see Section \ref{sec:gap}).

Roughly speaking, we shall prove that
for $\rho\ge 1$ all   attracting fixed points must be regular solutions (Theorem \ref{thm:stable})
and that for $\rho\ge 0$ all regular solutions are   attracting (Theorem \ref{thm:stable2}). 
In other words, for $\rho\ge 1$, we need not concern with the problem of stagnation near a fixed point
that is not a regular solution (a common problem with AP). 
Moreover, we know that the regular solutions are strictly attracting under additional 
mild conditions (Corollary \ref{cor1}). On the other hand,
the problem of divergence (associated with AAR) when the data constraint is infeasible 
does not arise for Gaussian-DRS in view of Theorem  \ref{thm:bounded}.

\begin{prop} Let $
x: =R_X u$  and 
assume $|x|>0$. Set \beq
\label{380}
{\Omega}=\diag(\sgn(x)), \quad \tilde P_X={\Omega}^*P_X \Omega,  \quad \tilde R_X={\Omega}^*R_X \Omega. 
\eeq
 Then 
\beqn
\lim_{\ep \to 0}\Omega^*(\Gamma(u+\epsilon v) -\Gamma(u) )/\ep&= &J_A(\eta),\quad \eta ={\Omega}^*v
\eeqn
where 
\beq
\label{38'}
J_A(\eta) 
&=& \half \eta+ {\rho-1\over 2(\rho+1)} \tilde R_X \eta+{\im\over \rho+1} {b\over |x|} \odot \Im\lt[\tilde  R_X \eta \rt].
\eeq

\end{prop}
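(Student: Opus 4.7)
The approach is a direct first-order expansion of the three terms that define $\Gamma$ in the form \eqref{G1}, followed by a change of coordinates by the unitary diagonal matrix $\Omega$. Since the first two pieces of $\Gamma$, namely $u/(\rho+1)$ and $(\rho-1)/(\rho+1)\,P_X u$, are $\IR$-linear in $u$, their differentials at $u$ applied to $v$ are simply $v/(\rho+1)$ and $(\rho-1)/(\rho+1)\,P_X v$. The entire nontrivial content is the differential of the map $u\mapsto P_Y R_X u = b\odot \sgn(R_X u)$, and this is where I would concentrate the argument.

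First I would carry out the componentwise linearization of $\sgn$ at a nonzero complex number. For $z\neq 0$, writing $|z+\ep\,dz|^2=|z|^2+2\ep\Re(\bar z\,dz)+O(\ep^2)$ and expanding $(z+\ep\,dz)/|z+\ep\,dz|$ to first order, a short calculation gives
\[
d\sgn(z)[dz]=\frac{1}{|z|}\bigl(dz-\sgn(z)\,\Re(\overline{\sgn(z)}\,dz)\bigr)=\frac{\im\,\sgn(z)}{|z|}\,\Im\bigl(\overline{\sgn(z)}\,dz\bigr),
\]
where the last equality uses $dz=\sgn(z)\,\overline{\sgn(z)}\,dz$ and $w-\Re(w)=\im\,\Im(w)$. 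Applying this componentwise to $P_Y(x)=b\odot\sgn(x)$ with $x=R_Xu$ and using the chain rule through the linear map $R_X$, the differential of $u\mapsto P_Y R_X u$ at $u$ applied to $v$ is
\[
\im\,\frac{b}{|x|}\odot\sgn(x)\odot\Im\bigl(\overline{\sgn(x)}\odot R_X v\bigr)=\im\,\Omega\,\diag(b/|x|)\,\Im(\Omega^* R_X v).
\]

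With the linearization in hand, the second step is to left-multiply by $\Omega^*$ and simplify. Unitarity of $\Omega$ eliminates the outer $\Omega$ in the nonlinear term, turning it into $(\im/(\rho+1))\,(b/|x|)\odot\Im(\Omega^* R_X\Omega\,\eta)=(\im/(\rho+1))\,(b/|x|)\odot\Im(\tilde R_X\eta)$, where $\eta=\Omega^* v$. For the linear terms, $\Omega^* v/(\rho+1)=\eta/(\rho+1)$ and $\Omega^* P_X v=\tilde P_X\eta$. I would then substitute the identity $\tilde P_X=(\tilde R_X+I)/2$ coming from $R_X=2P_X-I$; collecting the coefficient of $\eta$ gives $1/(\rho+1)+(\rho-1)/(2(\rho+1))=1/2$, which is exactly the leading coefficient in \eqref{38'}. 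The coefficient of $\tilde R_X\eta$ equals $(\rho-1)/(2(\rho+1))$, matching the statement.

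The one subtlety worth flagging, and the place where errors are easiest to make, is the $\IR$-linear (rather than $\IC$-linear) nature of $d\sgn$; the assumption $|x|>0$ guarantees that every component of $x$ avoids the sole point of discontinuity of $\sgn$, so the componentwise expansion is legitimate and the $o(\ep)$ remainders combine to a genuine Fr\'echet differential. Beyond this, the derivation is purely algebraic bookkeeping, and the conjugation by $\Omega$ is precisely designed to absorb the factor $\sgn(x)$ so that the nonlinear term takes the clean form $(b/|x|)\odot\Im(\tilde R_X\eta)$.
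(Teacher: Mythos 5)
Your proposal is correct and follows essentially the same route as the paper: the only nontrivial ingredient is the componentwise $\IR$-linear derivative of $\sgn$ at a nonzero point, $d\sgn(z)[dz]=\im\,\sgn(z)\,\Im(\overline{\sgn(z)}\,dz)/|z|$, which is exactly the paper's "key observation," followed by the chain rule through $R_X$, conjugation by $\Omega$, and the identity $P_X=(R_X+I)/2$ to collect the coefficients $\half$ and $(\rho-1)/(2(\rho+1))$. The algebra checks out, and your remark that $|x|>0$ is what legitimizes the componentwise expansion matches the role of that hypothesis in the paper.
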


\begin{proof}
 The key observation is 
that the derivative of $\sgn(c)=c/|c|\in \IC, c\neq 0$,  is given by 
\beqn
\lim_{\ep\to 0} {1\over \ep } \lt[{c+\ep  a\over |c+\ep a|} -{c\over |c|}\rt]&=&\lim_{\ep\to 0} {\sgn(c)\over \ep } \lt[{1+\ep  a/c\over |1+\ep a/c|} -1\rt]\\
&=&\im\, \Im\lt[{a/c}\rt]{\sgn(c)}\\
&=&\im\, \Im\lt[\sgn(\bar c) {a}\rt]{\sgn(c)\over |c|}
\eeqn
{for any $a \in \IC$} where $\Im$ denotes the imaginary part. 
So we have
\beqn
\lim_{\ep \to 0}{1\over \ep}(\Gamma(u+\epsilon v) -\Gamma(u) )&= &
\half v+ {\rho-1\over 2(\rho+1)} R_X v+{\im\over \rho+1} {b\over |x|} \odot \Omega \Im\lt[\Omega^* R_X v\rt]
\eeqn
which,  in terms of $\eta=\Omega^* v$ and the notation \eqref{380}, becomes $\Omega$ times  $J_A$ in \eqref{38'}.

\end{proof}

The following result says that for $\rho\ge 1$ all the non-solution fixed points are non-attracting.
 \begin{thm} Let $\rho\ge 1$. \commentout{\red and suppose that $P_X$ is an orthogonal projection.}
Let  $u$ be a fixed point such that  $x: =R_X u$ has no vanishing components. Suppose 
 \beq
 \label{270}
\|J_A(\eta)\| \le \|\eta\|,\quad\forall \eta\in \IC^N. 
   \eeq
Then
\beq
\label{key2}
 x=P_X x=b\odot\sgn(x),
\eeq
implying $u=x$ is a regular solution. 
\label{thm:stable}
\end{thm}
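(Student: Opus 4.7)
The plan is to test the norm inequality \eqref{270} on the purely imaginary direction $\eta = iy$, where $y := |x|$ is real and positive. This vector is chosen to interact cleanly with the weight $b/|x|$ appearing in $J_A$, and should saturate the inequality only at regular solutions.

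The first step is to recast the fixed-point equation \eqref{32} in $\Omega$-conjugated form. Applying $\Omega^* = \diag(\overline{\sgn(x)})$ and using $\Omega^* x = y$, I would obtain $\tilde P_X y - \rho\, \tilde P_X^\perp y = b$, and hence $\tilde P_X y = (b+\rho y)/(\rho+1)$ and $\tilde R_X y = w$, where
\[
  w \;:=\; (2b + (\rho-1)y)/(\rho+1).
\]
Since $\tilde R_X$ is a unitary reflection ($\Omega$ is unitary and $P_X$ is an orthogonal projection), this already gives $\|w\| = \|y\|$. For $\rho \ge 1$ the vector $w$ is additionally nonnegative componentwise, a fact I reserve for the final step.

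The second step is to evaluate $J_A(iy)$ using \eqref{38'}. Since $\tilde R_X(iy) = iw$ is purely imaginary, $\Im[\tilde R_X(iy)] = w$; a short algebraic simplification based on the identity $(\rho-1)y_j w_j + 2 b_j w_j = (\rho+1)w_j^2$, which follows directly from the definition of $w$, collapses the formula into the componentwise expression
\[
  J_A(iy)_j \;=\; \frac{i\,(y_j^2 + w_j^2)}{2\, y_j}.
\]
The hypothesis $\|J_A(iy)\|^2 \le \|iy\|^2 = \|y\|^2$ then reduces to $\sum_j (y_j^2 + w_j^2)^2/y_j^2 \le 4\|y\|^2$.

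On the other hand, the pointwise AM--GM inequality $(y_j^2 + w_j^2)^2 \ge 4\, y_j^2\, w_j^2$ yields $\sum_j (y_j^2 + w_j^2)^2/y_j^2 \ge 4\|w\|^2 = 4\|y\|^2$, with pointwise equality iff $y_j^2 = w_j^2$. The two bounds force saturation, hence $|w_j| = y_j$ for every $j$; combined with $w \ge 0$ (from $\rho \ge 1$), this gives $w = y$, and unwinding the definition of $w$ yields $b = |x|$. Substituting $|x| = b$ back into \eqref{32} forces $(1+\rho) P_X^\perp x = 0$ and hence $P_X x = x$, completing the conclusion $x = P_X x = b\odot\sgn(x)$. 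The main obstacle is identifying the correct test direction $\eta = iy$ together with the componentwise collapse of $J_A(iy)$; once those are in hand the AM--GM step is automatic, and the hypothesis $\rho \ge 1$ enters only at the final step to upgrade $|w_j| = y_j$ to $w_j = y_j$.
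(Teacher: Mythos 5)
Your proof is correct, and it takes a genuinely different route from the paper's. The paper argues by contradiction: assuming $P^\perp_X x\neq 0$, it tests \eqref{270} on the direction $\eta$ with $\tilde R_X\eta=\im\tilde P_X b$, derives the global identity $\rho\|x\|^2=\|b\|^2+(\rho-1)|x|\cdot b$ by the Pythagoras theorem, and then forces $\|J_A(\eta)\|>\|\eta\|$ through a chain of Cauchy--Schwarz estimates in which the hypothesis $\rho\ge 1$ enters via the bound $\|x\|\le\|b\|$. You instead test the single direction $\eta=\im y$ with $y=|x|$, use the fixed-point equation to see that $\tilde R_X y=w:=(2b+(\rho-1)y)/(\rho+1)$ is real with $\|w\|=\|y\|$, collapse $J_A(\im y)$ to the componentwise form $\im(y_j^2+w_j^2)/(2y_j)$ (the identity you need is exactly $(\rho+1)w_j=2b_j+(\rho-1)y_j$, so the collapse is correct), and close with pointwise AM--GM against the isometry bound. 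This is shorter, avoids Cauchy--Schwarz and the norm identity entirely, and produces pointwise rather than aggregate equality conditions. It also quietly proves more than the stated theorem: the rejected branch $w_j=-y_j$ would force $b_j=-\rho y_j<0$, which is impossible for any $\rho>0$ since $b\ge 0$ and $y_j>0$, so the nonnegativity of $w$ (the only place you invoke $\rho\ge 1$) is a convenience rather than a necessity, and your argument in fact gives the conclusion for every $\rho>0$. This is consistent with Remark \ref{rmk:cdr}, which concerns only $\rho=0$, where $w_j=-y_j$ is attainable at pixels with $b_j=0$.
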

\begin{rmk}\label{rmk:cdr}
Previous results \cite{FDR} suggest that when the regular solution is unique
up to a constant factor, all AAR fixed points in \eqref{fixed2'} are   attracting in the sense \eqref{270}. In other words, Theorem \ref{thm:stable} is likely false  for $\rho=0$. 
\end{rmk}

\begin{proof}
In view of Proposition \ref{prop4}, it suffices to show that $P^\perp_X x=0$. 

We prove the statement by contradiction.  Suppose $P^\perp_X x\neq 0$. 

By  \eqref{32} and the Pythogoras theorem 
we have 
\beq
\|P_X x\|^2_2+\rho^2 \|P^\perp_X x\|^2=\|b\|^2\label{419}
\eeq
and hence $\|b\|\ge \|x\|$ for $\rho\ge 1$.
Applying $\Omega^*$  we rewrite \eqref{30} as 
\beq\label{30'}
\tilde P_X |x|+ \rho(|x|-\tilde P_X |x|)&=& \tilde R_X b
\eeq
On the other hand, applying $\tilde P_X$ on \eqref{30'} we have 
\begin{equation*}
\tilde P_X |x| = \tilde P_X b
\end{equation*}
and hence by \eqref{30'} 
\beq
\label{38}
\tilde P_X |x| = \tilde P_X b={\rho |x|\over 1+\rho}+{b\over 1+\rho}.\label{key}
\eeq

We now show that $\|J_A(\eta)\| > \|\eta\|$ for any $\eta$ such that 
\beq
\label{414}
\tilde R_X\eta= \im \tilde P_X b={\im\rho \over 1+\rho}|x|+{\im \over 1+\rho} b. 
\eeq
To this end, it is more convenient to write $J_A$ in \eqref{38'} in terms $\xi:= \tilde R_X \eta$. With a slight abuse of notation
we write
\beq
\label{383}
J_A(\xi)
&=&\tilde P_X \xi-{\xi\over \rho+1}+ {\im\over \rho+1}{b\over |x|} \odot \Im(\xi)
\eeq
where we have used the properties in Proposition \ref{prop3}.

Since $\|\xi\|=\|\eta\|$, our goal is to show 
$\|J_A(\xi)\|>\|\xi\|$. 

First we make an observation that will be useful later. We claim that 
\beq
\label{415}
\rho\|x\|^2=\|b\|^2+(\rho-1)|x|\cdot b
\eeq
where ``$\cdot$" denote the (real) scalar product between two vectors. 
By \eqref{key}, 
\beqn
\tilde P^\perp_X b&= b- \tilde P_X b = {\rho \over 1+\rho}(|b|-|x|)
\eeqn
and hence by the Pythogoras theorem
\beqn
\|b\|^2&=& \|\tilde P_Xb\|^2+\|\tilde P^\perp_Xb\|^2\\
&=& \lt\|{\rho |x|\over 1+\rho}+{b\over 1+\rho}\rt\|^2+\lt\|{\rho \over 1+\rho}(|b|-|x|)\rt\|^2\\
&=&{2\rho^2\over(\rho+1)^2}\|x\|^2+{2\rho(1-\rho)\over (\rho+1)^2} |x|\cdot b+
{\rho^2+1\over (\rho+1)^2} \|b\|^2
\eeqn
which becomes \eqref{415} after rearrangement. 

Next, note that by \eqref{414} 
\[
\tilde P_X \xi=\tilde P_X \eta=\im \tilde P_X b=\xi, 
\]
which is purely imaginary,  and hence
\beq
\label{421}
J_A(\xi)&=&{\rho\over \rho+1}\xi + {\im\over \rho+1}{b\over |x|} \odot \xi
\eeq
by \eqref{383}.

After some tedious but straightforward algebra with \eqref{414} and \eqref{421}, we see that $\|J_A(\xi)\|>\|\xi\|$ 
is equivalent to the inequality
\beqn
0<(5\rho^2-2\rho-1)\|b\|^2+(2\rho^3-4\rho^2-2\rho) |x|\cdot b+4\rho {b\over |x|}\cdot b^2+
\lt\|{b^2\over |x|}\rt\|^2-\rho^2(2\rho+1)\|x\|^2
\eeqn 
which by \eqref{415} reduces to 
\beq
\label{441}
0< (3\rho^2-3\rho-1)\|b\|^2-(3\rho^2+\rho)|x|\cdot b+
4\rho {b\over |x|}\cdot b^2+\lt\|{b^2\over |x|}\rt\|^2.
\eeq

To proceed, we note that the assumption $\tilde P_X^\perp x\neq 0$ implies 
$|x| \neq b$, $\|x\|<\|b\|$ and moreover $|x|, b$ are not a multiple of each other. 
So by the Cauchy-Schwarz inequality we have 
\beqn
\lt\|{b^2\over |x|}\rt\| &>&{\|b\|^2\over \|x\|}\\
{b\over |x|}\cdot b^2 &=&\lt\|{b^{3/2}\over |x|^{1/2}}\rt\|^2> {\|b\|^4\over \| |x|^{1/2}\odot b^{1/2}\|^2}= {\|b\|^4\over |x|\cdot b}. 
\eeqn
and hence the last two terms on the right hand side of \eqref{441} have the strict lower bound
\beq
\label{442}
4\rho {b\over |x|}\cdot b^2+\lt\|{b^2\over |x|}\rt\|^2&> & 4\rho {\|b\|^4\over |x|\cdot b}+ {\|b\|^4\over \|x\|^2} \\
&> &(1+4\rho)\|b\|^2\nn
\eeq
where we have used the fact $\|b\|\ge\|x\|$ due to $\rho\ge 1$.

In view of \eqref{442}  the right hand side of \eqref{441} is strictly greater than
\[
(3\rho^2-3\rho-1)\|b\|^2-(3\rho^2+\rho)\|b\|^2+(1+4\rho)\|b\|^2=0.
\]
In other words, \eqref{441} holds indeed and the proof for $\|J_A(\xi)\|>\|\xi\|$ is complete.

 This clearly  contradicts the assumption  \eqref{270}. 
  Therefore, $P^\perp_X x=0$ and the desired result \eqref{key2}  follows from Propositions \ref{prop2} and \ref{prop4}. 
 
\end{proof}

 The next result says that for any $\rho\ge 0$,  all regular solutions are
 attracting fixed points.
 
 \begin{thm}\label{thm:stable2} Let $\rho\ge 0.$ \commentout{\red and suppose $\px$ is an orthogonal projection. }
Let $u$ be  a nonvanishing regular solution. 
Then 
\beq
\label{320}
\|J_A(\eta)\| \le \|\eta\|
\eeq
 for all $\eta \in \IC^N$ and the equality holds 
in the direction $\pm \im b$ (and possibly elsewhere on the unit sphere). 
\end{thm}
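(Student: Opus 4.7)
The plan is to reduce $J_A$ to a much simpler expression exploiting the regular solution hypothesis, then split complex vectors into real and imaginary parts and invoke the fact that $\tilde P_X$ is an orthogonal projection.

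First, because $u$ is a regular solution we have $P_X u = u$ and $|u|=b$, so $x=R_X u=u$ and consequently $|x|=b$. The factor $b/|x|$ in \eqref{38'} is therefore the all-ones vector and $J_A$ simplifies. Setting $\xi=\tilde R_X\eta$ (so $\|\xi\|=\|\eta\|$ and $\eta=\tilde R_X\xi$), substituting $\tilde R_X = 2\tilde P_X - I$, and using the identity $i\,\Im[\xi]=(\xi-\bar\xi)/2$, I would show after the coefficient bookkeeping that
\begin{equation*}
J_A(\eta)\;=\;\tilde P_X\xi \;-\;\frac{\Re[\xi]}{\rho+1}.
\end{equation*}
This closed form is the workhorse of the proof.

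Next, write $\xi=s+it$ with $s,t\in\IR^N$ and rearrange
\begin{equation*}
J_A(\eta)\;=\;\tilde P_X\!\left(\tfrac{\rho}{\rho+1}\,s+it\right)\;-\;\tfrac{1}{\rho+1}\,\tilde P_X^\perp s.
\end{equation*}
Since the ranges of $\tilde P_X$ and $\tilde P_X^\perp$ are orthogonal, applying the Pythagoras identity yields
\begin{equation*}
\|J_A(\eta)\|^2 \;=\; \|\tilde P_X v\|^2 + \tfrac{1}{(\rho+1)^2}\|\tilde P_X^\perp s\|^2,\qquad v:=\tfrac{\rho}{\rho+1}s+it.
\end{equation*}
I would then use $\|\tilde P_X v\|^2=\|v\|^2-\|\tilde P_X^\perp v\|^2$ and $\|v\|^2=\tfrac{\rho^2}{(\rho+1)^2}\|s\|^2+\|t\|^2$ to compute $\|\eta\|^2-\|J_A(\eta)\|^2$ explicitly. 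After elementary algebra (expanding $\|\tilde P_X^\perp v\|^2=\tfrac{\rho^2}{(\rho+1)^2}\|\tilde P_X^\perp s\|^2+\|\tilde P_X^\perp t\|^2$) the difference becomes
\begin{equation*}
\|\eta\|^2-\|J_A(\eta)\|^2 \;=\; \tfrac{2\rho+1}{(\rho+1)^2}\|s\|^2 \;+\; \|\tilde P_X^\perp t\|^2 \;+\; \tfrac{\rho-1}{\rho+1}\|\tilde P_X^\perp s\|^2,
\end{equation*}
which is manifestly nonnegative when $\rho\ge 1$. For $0\le\rho<1$ the last term may be negative, so I would dominate $\|\tilde P_X^\perp s\|^2\le\|s\|^2$ and check that the resulting coefficient $\tfrac{2\rho+1}{(\rho+1)^2}+\tfrac{\rho-1}{\rho+1}=\tfrac{\rho(\rho+2)}{(\rho+1)^2}\ge 0$, establishing \eqref{320} for all $\rho\ge 0$.

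Finally, for the equality claim, note that $\tilde P_X b = \Omega^*P_X(\Omega b)=\Omega^*P_X u=\Omega^* u = b$ by the regular solution property. Taking $\eta=\pm ib$ gives $\xi=\tilde R_X(\pm ib)=\pm i\tilde R_X b=\pm ib$, hence $s=0$, $t=\pm b$, $v=\pm ib$, and $\tilde P_X v = \pm ib$, so $J_A(\eta)=\pm ib=\eta$. I expect the main obstacle to be purely notational — keeping the $\IR$-linear pieces (from the $\Im$ operation) disentangled from the $\IC$-linear pieces (from $\tilde P_X$); the crucial trick that tames everything is the identity $i\,\Im[\xi]=(\xi-\bar\xi)/2$ which collapses $J_A$ into the simple form above and lets the real/imaginary split be carried out cleanly.
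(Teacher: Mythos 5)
Your reduction of $J_A$ to the closed form $J_A(\eta)=\tilde P_X\xi-\frac{1}{\rho+1}\Re[\xi]$ with $\xi=\tilde R_X\eta$ is correct and agrees with the paper's own starting point, and your verification of equality along $\pm\im b$ is also fine. The gap is in the middle: after the (valid) first Pythagoras step you write
\[
\|\tilde P_X^\perp v\|^2=\tfrac{\rho^2}{(\rho+1)^2}\|\tilde P_X^\perp s\|^2+\|\tilde P_X^\perp t\|^2,\qquad v=\tfrac{\rho}{\rho+1}s+\im t,
\]
but $\tilde P_X=\Omega^*P_X\Omega$ is a genuinely \emph{complex} Hermitian projection, so $\tilde P_X^\perp s$ and $\im\,\tilde P_X^\perp t$ are not orthogonal even though $s,t$ are real: the cross term
\[
\tfrac{2\rho}{\rho+1}\,\Re\bigl\langle \tilde P_X^\perp s,\ \im\,\tilde P_X^\perp t\bigr\rangle
=-\tfrac{2\rho}{\rho+1}\,\Im\bigl(s^\top\tilde P_X^\perp t\bigr)
=\tfrac{2\rho}{\rho+1}\,\Im\bigl(s^\top\tilde P_X t\bigr)
=\tfrac{2\rho}{\rho+1}\,s^\top\Im[\tilde P_X]\,t
\]
does not vanish because $\Im[\tilde P_X]$ is a nonzero antisymmetric matrix in general. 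Consequently your final expression for $\|\eta\|^2-\|J_A(\eta)\|^2$ is missing the term $\frac{2\rho}{\rho+1}\Im(s^\top\tilde P_X t)$, which can have either sign, and the ``manifestly nonnegative'' conclusion does not follow. (The formula happens to be right at $\rho=0$, where the cross term carries a factor $\rho$, which may be why the error is easy to miss.)

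Controlling this cross term is exactly the hard part of the theorem, and it is where the paper's proof spends its effort: it block-diagonalizes $HH^*=\tilde P_X$ in the real orthonormal basis $\{\eta_k,\im\eta_{2n^2+1-k}\}$ adapted to the singular vectors of $\cH=[\Re[H]\ \ \Im[H]]$ (Proposition \ref{Srate}), where the off-diagonal entries $\lambda_k\lambda_{2n^2+1-k}$ are precisely the quantities your computation drops, and then bounds the eigenvalues \eqref{52}--\eqref{522} of the resulting $4\times4$ real matrices using the isometry identity $\lambda_k^2+\lambda_{2n^2+1-k}^2=1$ (Proposition \ref{Buu}). Your argument never invokes this special structure of $H$, and without it the interaction between the $\IC$-linear operator $\tilde P_X$ and the $\IR$-linear operator $\Re[\cdot]$ cannot be dismissed. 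To repair the proof you would need to bound $|\Im(s^\top\tilde P_X t)|$ against the remaining terms, which essentially forces you back to the paper's spectral decomposition.
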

\begin{proof}
By Proposition \ref{prop2}, $x:=R_X u=u$ is a fixed point. By Proposition \ref{prop4}, 
\[
u=b\odot \sgn(u)=Ag \quad\mbox{for some $g$.}
\]
  Rewriting $J_A(\eta)$ in \eqref{38'} as 
\beqn
J_A(\eta) &= & \tilde P_X \eta-{1\over \rho+1} \tilde R_X \eta+{\im\over 1+\rho}{b\over |x|}\odot \Im\lt(\tilde R_X \eta\rt)
\eeqn
and using  $|x|=b$ we obtain
\beqn
 J_A (\eta)&=&\tilde P_X\eta- {1\over 1+\rho}\Re\big(\tilde R_X \eta \big)\nn\\
\eeqn
where $\Re$ denotes the real part. 
We now show that $\|J_A(\eta)\|\le \|\eta\|$ for all $\eta$. 

To proceed, we shall write $\tilde P_X=HH^*$ where $H$ is an isometry. 
This can  be done for the matrix $C:=\Omega^* A$ via the QR decomposition. In our setting, the measurement matrix of each diffraction pattern has orthogonal columns and so does  the total measurement matrix. Hence the $R$ factor of $C$ is
a diagonal matrix with the norms of the columns of $C$ on the diagonal (see Appendix \ref{app:matrix}). 
For ease of notation, we may assume that $\Om^* A =H$. 

Note that 
\beq\label{Bv'}
\lt[\begin{matrix}
\Re[H]&- \Im[H]\\
\Im[H]& \Re[H]
\end{matrix}\rt]
\eeq
is real isometric because $H$ is complex isometric.
Define  \begin{equation} \label{Bv}
\cH:=\left[
\begin{matrix}
 \Re[H]&
  \Im[H] 
\end{matrix}
\right] \in \IR^{N\times 2n^2}.
\end{equation}

As in the set-up detailed in Appendix \ref{app:matrix} let the object be a $n\times n$ square image and
$\IC^{n^2}$ the object domain.

By Proposition \ref{Srate} in Appendix \ref{app:eigen}, $HH^*$ 
can be block-diagonalized into one $(N-2n^2)\times (N-2n^2)$ zero-block and $2n^2\,\,$ $2\times 2$ blocks of the form
\beq\label{block}
\lt[ \begin{matrix}
\lambda_k^2& \lambda_k\lambda_{2n^2+1-k}\\
\lambda_k\lambda_{2n^2+1-k}&\lambda_{2n^2+1-k}^2
\end{matrix}\rt],\quad k=1,2,\ldots, 2n^2
\eeq
in the orthonormal basis $\{\eta_k, \im \eta_{2n^2+1-k}: k=1,2,\ldots, 2n^2\}$
where $ \eta_k\in \IR^N$ are the right singular vectors, corresponding to the singular values $\lamb_k$,  of $\cH$.

Moreover, the complete set of singular values satisfy  
\beq
\label{40}
&1=\lamb_1\ge \lamb_2\ge \ldots\ge \lamb_{2n^2}= \lamb_{2n^2+1}=\ldots=\lamb_{N}=0&\\
&\lamb_k^2+\lamb_{2n^2+1-k}^2=1.&\label{41}
\eeq

In view of the block-diagonal nature of $HH^*$, we shall analyze $J_A(\eta)$ in the 2-dim spaces spanned by the orthonormal basis $\{\eta_k, \im \eta_{2n^2+1-k}\}$ one $k$ at a time. 

For any fixed $k$ and any $ z_1, z_2\in \IC$  let 
 \beqn
 \eta&=&z_1 \eta_k+\im z_2\eta_{2n^2+1-k}\\
 &=& \Re[z_1]\eta_k+ \Re[z_2] \im \eta_{2n^2+1-k}+\Im[z_1]\im \eta_k
 -\Im[z_2]\eta_{2n^2+1-k}.\nn
 \eeqn
 We shall use the basis $\{\eta_k,\im\eta_{2n^2+1-k}, \im \eta_k, -\eta_{2n^2+1-k}\}$  for expressing $\eta$ and $J_A(\eta)$.
 
 We obtain 
\beq\label{50}
J_A(\eta) &=&\lt(\lamb_k^2z_1+\lamb_k\lamb_{2n^2+1-k}z_2\rt)\eta_k+
\lt(\lamb_k\lamb_{2n^2+1-k}z_1+\lamb^2_{2n^2+1-k}z_2\rt)\im\eta_{2n^2+1-k}\\
&&+{1\over 1+\rho}\lt[
(1-2\lamb_k^2)\Re(z_1)-2\lamb_k\lamb_{2n^2+1-k} \Re(z_2)\rt]\eta_k \nn\\
&&
+ {1\over 1+\rho}\lt[2\lamb_k\lamb_{2n^2+1-k} \Im(z_1)-(1-2\lamb_{2n^2+1-k}^2)\Im(z_2) \rt]\eta_{2n^2+1-k}. \nn
\eeq
Next we treat \eqref{50} as a linear function of $\Re(z_1), \Re(z_2), \Im(z_1), \Im(z_2)$ with real coefficients in the basis $\{\eta_k,\im\eta_{2n^2+1-k}, \im \eta_k,- \eta_{2n^2+1-k}\}$ and represent $J_A$ by
a $4\times 4$ matrix which is block-diagonalized into two $2\times 2$ blocks:
\beq\label{51}
\lt[\begin{matrix} {1\over 1+\rho}+{\rho-1\over\rho+1} \lamb_k^2 & {\rho-1\over \rho+1}\lamb_k\lamb_{2n^2+1-k}\\
\lamb_k\lamb_{2n^2+1-k}& \lamb_{2n^2+1-k}^2
\end{matrix}\rt],\quad
\lt[\begin{matrix}
\lamb_k^2 &\lamb_k\lamb_{2n^2+1-k}\\
{\rho-1\over \rho+1}\lamb_k\lamb_{2n^2+1-k} &{1\over \rho+1}+{\rho-1\over\rho+1} \lamb_{2n^2+1-k}^2
\end{matrix}\rt].
\eeq
with the former of \eqref{51} acting on $\Re(z_1), \Re(z_2)$ and 
the latter acting on $\Im(z_1), \Im(z_2)$. 

The eigenvalues  of  the matrices in \eqref{51} are, respectively
\beq
\label{52}
&&{1\over 2(\rho+1)} \lt[{\rho} +{2} \lamb_{2n^2+1-k}^2\pm \sqrt{\rho^2-4\lamb_k^2\lamb^2_{2n^2+1-k}}\rt]\\
&&{1\over 2(\rho+1)} \lt[{\rho} +{2} \lamb_k^2\pm \sqrt{\rho^2-4\lamb_k^2\lamb^2_{2n^2+1-k}}\rt].\label{522}
\eeq
Because the two expressions are symmetrical with respect to the exchange of index ($k\leftrightarrow 2n^2+1-k$),
it suffices to analyze \eqref{52}, 
which, with the + sign,  equals 1 at $k=2n^2$ (recall $\lamb_{2n^2}=0$).
Next we  show that 1 is the largest eigenvalue
 among all $k$ and $\rho\in [0,\infty)$.

Note that  \eqref{52} is real-valued for any $\lamb_k\in [0,1]$ if and only if $\rho\ge 1$.
Hence, for $\rho\ge 1$, the maximum eigenvalue is 1 and occurs at $k=2n^2$.

For $\rho<1$ and $\rho^2-4\lamb_k^2+4\lamb_k^4\ge 0$,  the maximum value of \eqref{52} 
can be bounded as
\beq
\label{52'}
&& {1\over 2(\rho+1)} \lt[{\rho} +{2} (1-\lamb_k^2)+\sqrt{\rho^2-4\lamb_k^2(1-\lamb_k^2)}\rt]\\
&\le& {1\over 2(\rho+1)} \lt[{\rho} +{2} (1-\lamb_k^2) +\rho\rt]\nn\\
&=& 1-{\lamb_k^2\over \rho+1}\le 1\nn
\eeq
since $4\lamb_k^2(1-\lamb_k^2)\ge 0$. 
Hence the expression \eqref{52} achieves
the maximum value 1 at $k=2n^2$.

For $\rho<1$ and $\rho^2-4\lamb_k^2+4\lamb_k^4\le 0$,
the modulus of \eqref{52} equals
\beq
\label{55}
\sqrt{1-\lamb_k^2\over 1+\rho}\le 1. 
\eeq

By Proposition \ref{B*bound}, $\eta_1=b,$ $\tilde P_X (\im b)=\im b,$ $\tilde R_X (\im b)=\im b$ and hence $J_A(\im b)=\im b$. However, $\arg\max_{\|\eta\|=1}|J_A(\eta)|$ may contain points other than
$\pm\im b/\|b\|$ 
since we do not know if $\lamb_2<1$ without additional conditions (see Section \ref{sec:gap}). The proof is complete.

\end{proof}

\section{Spectral gap}\label{sec:gap}

To derive  a positive spectral gap ($\lamb_2<\lamb_1=1$), we need some details of the ptychographic set-up (Appendix \ref{app:matrix}).   

Let $\cT$ be the set of all shifts,  including $(0,0)$,  involved  in the ptychographic measurement. 
 Denote by $\mu^\bt$ the $\bt$-shifted probe for all $\bt\in \cT$ and $\cM^\bt$ the domain of
$\mu^\bt$. Let $f^\bt$ the object restricted to $\cM^\bt$.  For convenience, we assume  the periodic boundary condition on the whole object domain $\cM=\cup_{\bt \in \cT} \cM^\bt$ when $\mu^\bt$ crosses over the boundary of $\cM$. 

 \begin{figure}[t]
\begin{center}
\subfigure[]{\includegraphics[width=5cm]{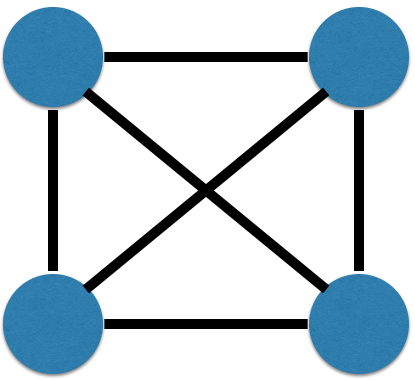}}\hspace{2cm}
\subfigure[]{\includegraphics[width=5cm]{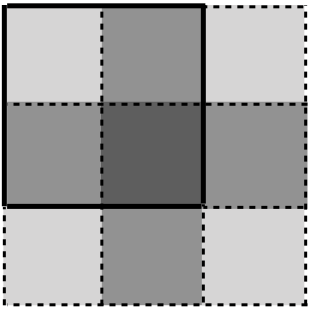}}
\caption{A complete undirected graph (a) representing four connected object parts (b) where the gray level indicates the number of coverages by the mask in four scan positions.
}
\label{fig:graph}
\end{center}
\end{figure}

Two blocks $\cM^\bt$ and $\cM^{\bt'}$ are said to be connected if  the minimum overlap condition 
\[
\#\{\cM^{\bt}\cap \cM^{\bt'}\cap \supp(f)\}\ge 2
\]
 is satisfied. Let $\cG$ be the undirected graph with the nodes corresponding to $\{\cM^\bt: \bt\in \cT\}$ and the edges between any pair of connected nodes (see Figure \ref{fig:graph}).

 Now we recall the following spectral gap theorem \cite{ptych-unique} (Proposition 3.5 and the subsequent remark).
   \begin{prop}\cite{ptych-unique}\label{prop:gap}  In addition to the above assumptions, 
 suppose  $\supp(f)$ is not a subset of a line. Let $u$ (and hence $x:=R_X u$) be a regular solution. 
 Let $\lamb_2$ be the second largest singular value of $\cH$ defined in \eqref{Bv}.  If the graph $\cG$ is connected,  then $\lamb_2<1$.
 \end{prop}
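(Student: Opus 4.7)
The plan is to reduce the spectral-gap claim $\lamb_2<1$ to a uniqueness statement about a \emph{sign-aligned} phase retrieval problem, and then to exploit the patch-overlap geometry of ptychography encoded in $\cG$. The first step is an algebraic reduction. Since $\cH=[\Re H,\Im H]$ and $H\in\IC^{N\times n^2}$ is a complex isometry, for any real $v\in\IR^{2n^2}$, splitting $v=(v_1,v_2)^T$ with $v_1,v_2\in\IR^{n^2}$ and setting $w=v_1-\im v_2\in\IC^{n^2}$, a direct computation gives $\cH v=\Re(Hw)$ and $\|v\|=\|w\|=\|Hw\|$. Hence $\|\cH v\|=\|v\|$ if and only if $Hw\in\IR^N$. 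Writing $H=\Omega^*AD^{-1}$ with $D$ the diagonal of column norms of $A$ and putting $g=D^{-1}w$, the top singular value of $\cH$ is attained precisely by those $g$ for which $\Omega^*Ag\in\IR^N$, i.e.\ for which $Ag[j]$ and $(Af)[j]$ lie on a common real line whenever $(Af)[j]\neq 0$. The known direction $g=Df$ gives $\Omega^*Af=\Omega^*\Omega b=b$, which is real, so $\lamb_1=1$. Therefore $\lamb_2<1$ is equivalent to the statement that every $g\in\IC^{n^2}$ satisfying $Ag=\beta\odot Af$ for some real $\beta\in\IR^N$ is a real scalar multiple of $f$.

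Second, interpret this condition ptychographically. For each shift $\bt\in\cT$ the corresponding block of $A$ produces $\cF(\mu^\bt\odot f^\bt)$, so the condition forces $\cF(\mu^\bt\odot g^\bt)$ to have the same phase as $\cF(\mu^\bt\odot f^\bt)$ at every frequency, up to a real sign per pixel. A short Fourier/conjugation argument then confines $\mu^\bt\odot g^\bt$ to a low-dimensional real-affine space anchored at $\mu^\bt\odot f^\bt$ and its conjugate-reflection companion, so each patch imposes a local rigidity on $g^\bt$.

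Third, propagate the local rigidities across $\cG$. If $\cM^\bt$ and $\cM^{\bt'}$ are connected, they share at least two pixels of $\supp(f)$, and the local descriptions of $g$ on the two patches must agree on the overlap. The hypothesis that $\supp(f)$ is not contained in a line is exactly what rules out the conjugate-reflection branch, because no nontrivial spatial reflection can simultaneously fix two non-collinear overlap pixels consistently with both patches. Consequently the local rigidity on each edge collapses to a single real scalar, and by transitivity along the connected graph these scalars agree, producing $g=cf$ with a single $c\in\IR$.

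The main obstacle I anticipate is the patch-level step: isolating the precise per-patch ambiguity from the condition that two masked Fourier transforms are pointwise-real-parallel, and proving that the only nontrivial local degree of freedom beyond real scaling is the conjugate-reflection branch. Once that local classification is in hand, the non-collinearity of $\supp(f)$ together with the connectedness of $\cG$ finishes the global propagation in an essentially combinatorial fashion, as in the proof of Proposition~3.5 of \cite{ptych-unique}.
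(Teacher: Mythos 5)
The paper does not actually prove this proposition --- it is imported verbatim from \cite{ptych-unique} (Proposition 3.5 there) --- so there is no in-paper argument to compare against; I can only judge your plan on its own terms. Your first step is correct and matches the variational characterization the paper records in Appendix \ref{app:eigen} (Proposition \ref{B*bound} and Corollary \ref{cor5.5}): since $\|\cH v\|=\|\Re(Hw)\|\le\|Hw\|=\|w\|=\|v\|$, the top singular space of $\cH$ in \eqref{Bv} corresponds exactly to those $w$ with $Hw\in\IR^N$, and $\lamb_2<1$ is equivalent to the real-linear space $\{g:\Om^*Ag\in\IR^N\}$ being exactly $\IR f$.

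The gap sits in steps two and three, which is where every hypothesis of the proposition must enter. Your local classification --- that the per-patch solution space of the pointwise real-collinearity constraint is real multiples of $\mu^\bt\odot f^\bt$ plus a ``conjugate-reflection branch'' --- is asserted rather than proved, and you flag it yourself as the main obstacle; worse, it appears to be the wrong object. The conjugate reflection is an ambiguity of the modulus constraint $|\hat h|=|\hat h_0|$, whereas your constraint is that $\hat h$ be pointwise real-collinear with $\hat h_0$; the conjugate reflection has Fourier transform $\overline{\hat h_0}$ times a linear phase, which for a random mask generically violates real-collinearity outright. So the branch you propose to eliminate using the non-collinearity of $\supp(f)$ is not the actual local obstruction, and the actual obstruction is never identified. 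A cleaner route, and the one consistent with the citation, avoids any local analysis of the linearized problem: if $\lamb_2=1$, Corollary \ref{cor5.5} gives $u\perp\im f$, $u\neq 0$, with $\Om^*Au$ purely imaginary, say $\im\gamma$ with $\gamma\in\IR^N$; then $A(f\pm\ep u)=(b\pm\im\ep\gamma)\odot\sgn(Af)$, so $|A(f+\ep u)|=|A(f-\ep u)|$ exactly for every real $\ep$. The uniqueness theorem of \cite{ptych-unique} --- whose hypotheses are precisely the connectivity of $\cG$, the minimum-overlap condition, and $\supp(f)$ not lying on a line --- then forces $f+\ep u=e^{\im\theta_\ep}(f-\ep u)$, and differentiating at $\ep=0$ yields $2u=\im\,\theta'(0)f$, contradicting $u\perp\im f$. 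Your plan could in principle be completed by proving a correct local rigidity lemma for the real-linear system, but as written the crucial step is missing and the mechanism you attribute to the hypotheses is misassigned.
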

 Some theoretical bounds for $\lambda_2$ can be found in \cite{ptych-unique}. 

Using Proposition \ref{prop:gap}, we can sharpen the result of Theorem \ref{thm:stable2}
as follows.  
  
   \begin{cor}\label{cor1} Under the assumptions of Proposition \ref{prop:gap}, the second largest singular value of $J_A$ is strictly less than 1 and achieves
 the minimum value
 \beq
 \label{450}
 {\lamb_2\over \sqrt{1+\rho_* }}\quad  \mbox{at}\quad \rho_*=2\lamb_2\sqrt{1-\lamb_2^2}\in [0,1].
 \eeq
 Moreover, the second largest singular value of $J_A$ is an increasing function of $\rho$ in
 the range $[\rho_*,\infty)$ and a decreasing function   in the range of $[0,\rho_*]$. 
 
 \end{cor}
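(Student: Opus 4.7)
I would build directly on the proof of Theorem~\ref{thm:stable2}, which block-diagonalizes $J_A$ into $2\times 2$ blocks indexed by the paired singular values $(\lambda_k,\lambda_{2n^2+1-k})$ of $\cH$. Since $\lambda_2<1$ by Proposition~\ref{prop:gap}, the only block in which $J_A$ attains a unit singular value is the one at $k=2n^2$ (responsible for the inescapable $\pm\im b$ direction). The second largest singular value is therefore governed by the block indexed by $k=2$, for which $(a,c):=(\lambda_2,\sqrt{1-\lambda_2^2})$ and $a^2+c^2=1$.

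\textbf{Computation in one block.} From \eqref{51}--\eqref{522} with these $(a,c)$, the discriminant $\rho^2-4a^2c^2$ vanishes at $\rho_*=2ac=2\lambda_2\sqrt{1-\lambda_2^2}$, which lies in $[0,1]$ because $2ac\le a^2+c^2=1$. In the complex regime $\rho\in[0,\rho_*)$, both eigenvalues of the block are complex conjugates of common modulus equal to the square root of the block's determinant, namely $c/\sqrt{\rho+1}$, and the companion block (from \eqref{522}) contributes $a/\sqrt{\rho+1}$; both are strictly decreasing in $\rho$. In the real regime $\rho\ge \rho_*$, the dominant eigenvalue is
\[
\mu_+(\rho)=\frac{\rho+2c^2+\sqrt{\rho^2-4a^2c^2}}{2(\rho+1)},
\]
and a direct differentiation in $\rho$ shows this is strictly increasing on $[\rho_*,\infty)$ with $\mu_+(\rho)\to 1$ as $\rho\to\infty$.

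\textbf{Value at the junction.} Continuity at $\rho=\rho_*$ gives the common value $(\rho_*+2c^2)/(2(\rho_*+1))$, which simplifies using the identity $(a+c)^2=a^2+c^2+2ac=1+\rho_*$. Indeed, $(a+c)/(1+2ac)=1/\sqrt{1+\rho_*}$, so the junction value reads $c/\sqrt{1+\rho_*}$ for one block and $a/\sqrt{1+\rho_*}=\lambda_2/\sqrt{1+\rho_*}$ for the companion. Combining the two monotonicity statements on $[0,\rho_*]$ and $[\rho_*,\infty)$ yields the U-shape asserted in the corollary, and the strict inequality $\lambda_2<1$ from Proposition~\ref{prop:gap} forces the minimum value strictly below $1$.

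\textbf{Principal obstacle.} The delicate step is confirming that the $k=2$ block governs the second largest singular value of $J_A$ globally, not only within its own $2\times 2$ sector. One must verify that no competing block $k\in\{3,\dots,2n^2-1\}$—each with its own real/complex transition at $\rho=2\lambda_k\sqrt{1-\lambda_k^2}$—nor the residual scalar eigenvalues $1/(\rho+1)$, $\rho/(\rho+1)$ contributed at $k=2n^2$ ever overshoot the $k=2$ value on the relevant range of $\rho$. The expected route is monotonicity of the complex-regime modulus $\max(a,c)/\sqrt{\rho+1}$ in the parameters $(\lambda_k,\sqrt{1-\lambda_k^2})$, combined with a case split on where $\lambda_k$ sits relative to $1/\sqrt{2}$; the ordering $\lambda_k\le\lambda_2$ for $k\ge 2$ should then funnel the supremum into the $k=2$ block. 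Piecing these comparisons together cleanly across the differing transition points is the main bookkeeping burden.
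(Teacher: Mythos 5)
Your proposal is correct and takes essentially the same route as the paper: the same $2\times 2$ block decomposition \eqref{51} from the proof of Theorem \ref{thm:stable2}, the same real/complex case split at the discriminant zero $\rho_*=2\lamb_2\sqrt{1-\lamb_2^2}$, the same junction identity $1+\rho_*=(\lamb_2+\sqrt{1-\lamb_2^2})^2$ yielding \eqref{450}, and the same monotonicity claims for \eqref{481}--\eqref{482}, while the cross-block comparison you flag as the ``principal obstacle'' is dispatched in the paper with no more detail than your sketch (the block maximum is increasing in $\max(\lamb_k,\lamb_{2n^2+1-k})$, which is largest at $k=2$ among $2\le k\le 2n^2-1$). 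The only slip is notational: your real-regime ``dominant eigenvalue'' should carry $2a^2=2\lamb_2^2$ rather than $2c^2$ (compare \eqref{460}), a swap you implicitly correct when you take the companion value $\lamb_2/\sqrt{1+\rho_*}$ at the junction.
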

 \begin{rmk} \label{rmk6.3} 
 By arithmetic-geometric-mean inequality,
 \[
 \rho_*\le 2 \times \half \sqrt{\lamb_2^2+1-\lamb_2^2}=1
 \]
 where the equality holds only when $\lamb^2_2=1/{2}$. 
 
 As $\lamb^2_2$ tends to 1, $\rho_*$ tends to 0 and as $\lamb^2_2$ tends to $\half$, $\rho_*$ tends to 1.
 Recall that $\lamb_2^2+\lamb^2_{2n^2-1}=1$ and hence $ [1/2,1]$ is the proper range of $\lamb^2_2$. 

 \end{rmk}
 
 \begin{proof}
 Our discussion splits into several cases. By the identity $\lamb_2^2=1-\lamb_{2n^2-1}^2$,  we have $\lamb_2^2(1-\lamb_2^2)=\lamb_{2n^2-1}^2(1-\lamb_{2n^2-1}^2)$ and 
 $\lamb_2^2\ge 1/2$. 
 
For $\rho>1$,  the larger eigenvalue in \eqref{52}  achieves the second largest value
 \beq
 \label{460}
 {1\over 2(1+\rho)} \lt[\rho+2\lamb_2^2+\sqrt{\rho^2-4\lamb_2^2(1-\lamb_2^2)}\rt]
\eeq
 at $k=2n^2-1$ after some algebra. The expression \eqref{460} is strictly less than
 \beqn
  {1\over 2(1+\rho)} \lt[2\rho+2\lamb_2^2\rt]={\rho+\lamb_2^2\over \rho+1}< 1 
  \eeqn
 with the spectral gap $\lamb_2<1$.

For $\rho=1$, \eqref{52} becomes
\beqn
{1\over 4} \lt[1+2(1-\lamb_k^2)\pm |1-2\lamb_k^2|\rt] 
\eeqn
which achieves the second largest value
\beq
\label{54'}
1-\lamb_{2n^2-1}^2=\lamb_2^2<1
\eeq
at $k=2n^2-1$ by \eqref{41}. 

The case of $\rho<1$ requires more analysis since the eigenvalue \eqref{52} may be real or complex. Analyzing as in  \eqref{52'} and \eqref{55}
we  conclude
that  the second largest value is 
\beq
\label{481}
{1\over 2(\rho+1)} \lt[{\rho} +{2}\lamb_2^2+\sqrt{\rho^2-4\lamb_2^2(1-\lamb_2^2)}\rt] &\mbox{\quad if \quad}& \rho\ge \rho_*=2\lamb_2\sqrt{1-\lamb_2^2}
\eeq
and
\beq
\label{482}
 {\lamb_2\over \sqrt{1+\rho}}&\mbox{\quad if \quad}& \rho\le \rho_*
\eeq
While the expression in \eqref{482} is a decreasing function of $\rho$ and less than
$\lamb_2$, \eqref{481} 
is an increasing function of $\rho$ and less than \eqref{54'} for $\rho=1$.

Also, for $\rho>1$, the expression \eqref{460}, as a function of $\rho$, has the derivative
\[
{1\over 2(\rho+1)^2} \lt[ 1-2\lamb_2^2+ {\rho+4\lamb_2^2(1-\lamb_2^2)\over \sqrt{\rho^2-4\lamb_2^2(1-\lamb_2^2)}}\rt]>{1\over 2(\rho+1)^2} \lt[ 2-2\lamb_2^2\rt]>0
\]
and hence achieves the minimum at $\rho=1$.
In other words, Gaussian-DRS with $\rho=1$ converges faster than Gaussian-DRS with $\rho>1$.

From the preceding analysis, the second largest singular value achieves the minimum
at the crossover value $\rho_*$ of the two expression in \eqref{481}. 
Substituting $\rho_*$ in \eqref{481} we arrive at \eqref{450}.

Although it is not immediately obvious, it can be verified by elementary (but somewhat tedious) calculus that \eqref{450} is less than $\lamb_2^2$ (for $\rho=1$).

 \end{proof}
 
 For comparison with AAR, we note 
 that, for $\rho=0$,  \eqref{482} is exactly $\lambda_2$ and hence greater than $\lamb_2^2$ in  \eqref{54'}, the convergence rate for $\rho=1$,  
 which coincides with the convergence rate of Alternating Projections (AP) \cite{AP-phasing}. We state this observation as a corollary. 
 
 \begin{cor}\label{cor:rate1}
 For the Gaussian-DRS with $\rho=1$, the local convergence rate is given by $\lamb_2^2$ which is smaller
 than the convergence rate $\lamb_2$ for $\rho=0$. 
 
 \end{cor}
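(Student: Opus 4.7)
The plan is to extract both convergence rates directly from the calculations already carried out in the proof of Corollary~\ref{cor1}, and then compare them. Since the top singular value $\lambda_1=1$ of $J_A$ corresponds to the unavoidable one-dimensional center manifold spanned by $\pm \im b$ (see the concluding paragraph of the proof of Theorem~\ref{thm:stable2}), the local linear convergence rate near a regular solution is governed by the second largest singular value of $J_A$. So the entire task reduces to specializing the $\rho$-dependent formulas for this second singular value to $\rho=1$ and $\rho=0$.

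First, for $\rho=1$, I would substitute into \eqref{52} (or equivalently read off \eqref{54'} in the proof of Corollary~\ref{cor1}): after the simplification carried out there, the second largest eigenvalue of the $2\times 2$ blocks evaluated at $k=2n^2-1$ collapses to $1-\lambda_{2n^2-1}^2=\lambda_2^2$, using the identity $\lambda_k^2+\lambda_{2n^2+1-k}^2=1$ from \eqref{41}. Thus the Gaussian-DRS rate at $\rho=1$ is exactly $\lambda_2^2$.

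Next, for $\rho=0$, I would check that we are in the regime $\rho \le \rho_*$ (since $\rho_*\ge 0$ by definition in \eqref{450}), so formula \eqref{482} applies and gives
\[
\frac{\lambda_2}{\sqrt{1+0}}=\lambda_2,
\]
which is the AAR rate. This matches the known local convergence rate of AP, as noted just before the corollary.

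Finally, I would invoke the spectral gap of Proposition~\ref{prop:gap} to conclude $\lambda_2<1$ under the standing assumptions, so that $\lambda_2^2<\lambda_2$, giving the desired strict improvement. There is essentially no hard step here: the result is a one-line consequence of the two specializations and the spectral gap, and the only thing to be careful about is justifying that the local rate is indeed governed by the \emph{second} largest singular value (not the first), which follows because the unit singular value is intrinsic to the constant phase ambiguity and does not contribute to convergence of the equivalence class of solutions.
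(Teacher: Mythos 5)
Your proposal is correct and follows essentially the same route as the paper: the paper likewise reads off $\lamb_2^2$ from \eqref{54'} at $\rho=1$ and $\lamb_2$ from \eqref{482} at $\rho=0$, and treats the corollary as an immediate observation from the computations in Corollary \ref{cor1}, with the unit singular value attributed to the global phase ambiguity. Your added care in checking $0\le\rho_*$ and in invoking the spectral gap $\lamb_2<1$ for strictness is consistent with (and slightly more explicit than) the paper's one-line justification.
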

  
{  With a positive spectral gap, this largest eigenvalue $1$ in Theorem \ref{thm:stable2} corresponds to the global phase factor and Corollary \ref{cor1}
can be used to prove local, linear convergence for Gaussian-DRS with $\rho\ge 0$. 
The proof is analogous to that in \cite{ptych-unique}  for phase retrieval (Theorem 5.1)  and in \cite{FDR} (Theorem 3.4)  for ptychography for AAR ($\rho=0$). But the argument is technical in nature and omitted here  for the sake of space.}

\section{Selection of parameter}\label{sec:stepsize}
{  A goal of the present work is to circumvent the divergence behavior of AAR (as stated in Proposition \ref{prop0} (ii) for the convex case)
when the feasibility problem is inconsistent and has no (generalized or regular) solution.  

Let us first examine how this problem manifests in the fixed point equation \eqref{sa'} reproduced here for the convenience of the reader:
\beqn
P_X u+\rho  P^\perp_X u=b\odot \sgn(R_X u). 
\eeqn
For $\rho=0$, $P_X u=b\odot \sgn(R_X u) $ and, in particular, $|P_X u|=b$, i.e. every fixed point of AAR  is a generalized solution. 
So if the problem is inconsistent, then no solution (generalized or regular) exists, implying the fixed point set is empty. 

The case with $\rho>0$ is harder to analyze. For $\rho\ge 1$, however, Theorem \ref{thm:stable} says that all   attracting fixed points are regular solutions and hence   in the inconsistent case  all fixed points are
 repelling in some directions (likely partially hyperbolic with a center manifold containing at least a circle corresponding to an arbitrary constant phase factor). In other words convergence to a fixed point is almost impossible  in the inconsistent case with $\rho\ge 1$. From this perspective, Theorem \ref{thm:stable} is a pessimistic result in the traditional sense of convergence analysis. 

But all hope is not lost.  First of all, let us recall the earlier observation that in the inconsistent case $f$ is probably not a stationary point of the loss function. Hence a convergent iterative scheme to a stationary point may not be a good idea after all. A good iterative scheme need not converge as long as it produces a good outcome when properly terminated, i.e. its iterates stay in the true solution's vicinity of size comparable to the noise level. 

 Second, Theorem \ref{thm:bounded} implies  that every
Gaussian-DRS sequence is bounded and has a convergent subsequence $\{u_{k_j}\}_{j=1}^\infty$ with the limit, say $\hat u$. 
If, in addition, 
\beq
\label{strange}
\lim_{j\to\infty} (u_{k_j}-\Gamma(u_{k_j}))=0,
\eeq
 then by taking the limit on both sides of the fixed point equation \eqref{sa'}, one can conclude that
$\hat u$ is a fixed point. 
The preceding analysis tells us that  in the inconsistent case  \eqref{strange} is false for $\rho\ge 1$ (The case with $\rho\in (0,1)$ is open), suggesting that $\hat u$ is part of a more complicated attractor. 

In particular, if $\hat x:=R_X \hat u$ does not vanish where $b>0$, then, by the continuity of $\Gamma$ at such points,   $\hat u_1:=\lim_j \Gamma(u_{k_j})$ exists.
Assuming that $R_X\Gamma^l(u_{k_j}), l\ge 1$, do not vanish wherever $b>0$, we obtain a set of cluster points $\hat u_l=\lim_j \Gamma^l(u_{k_j}), l\ge 1$
which constitutes a new iterated  sequence, i.e. $\hat u_{l+1}=\Gamma(\hat u_l)$. This is the case of limit cycle in theory of bifurcation. 
If, however, the non-vanishing assumption  fails, then different orbits can branch off at discontinuities. 

In general, when a bounded invariant set exists (as implied by Theorem \ref{thm:bounded}) and no fixed point is attracting (e.g., with $\rho\ge 1$ in the inconsistent case), there tend to be some nontrivial attractors (limit cycles,  strange attractors, ergodic invariant domain etc).

Nevertheless, the non-convergent sequence controlled by the underlying attractors may still produce  a reasonable solution under a proper termination rule. Our numerical experiments with noisy data confirms that  this is indeed the case (see Figure \ref{fig:noise}). Analyzing the properties of such attractors is at the frontier of numerical analysis and beyond the scope of the present work. 
}

{  
\subsection{Phase retrieval with noiseless data}
\label{sec:num2}
\begin{figure}[t]
\centering
\subfigure[]{\includegraphics[width=6.5cm]{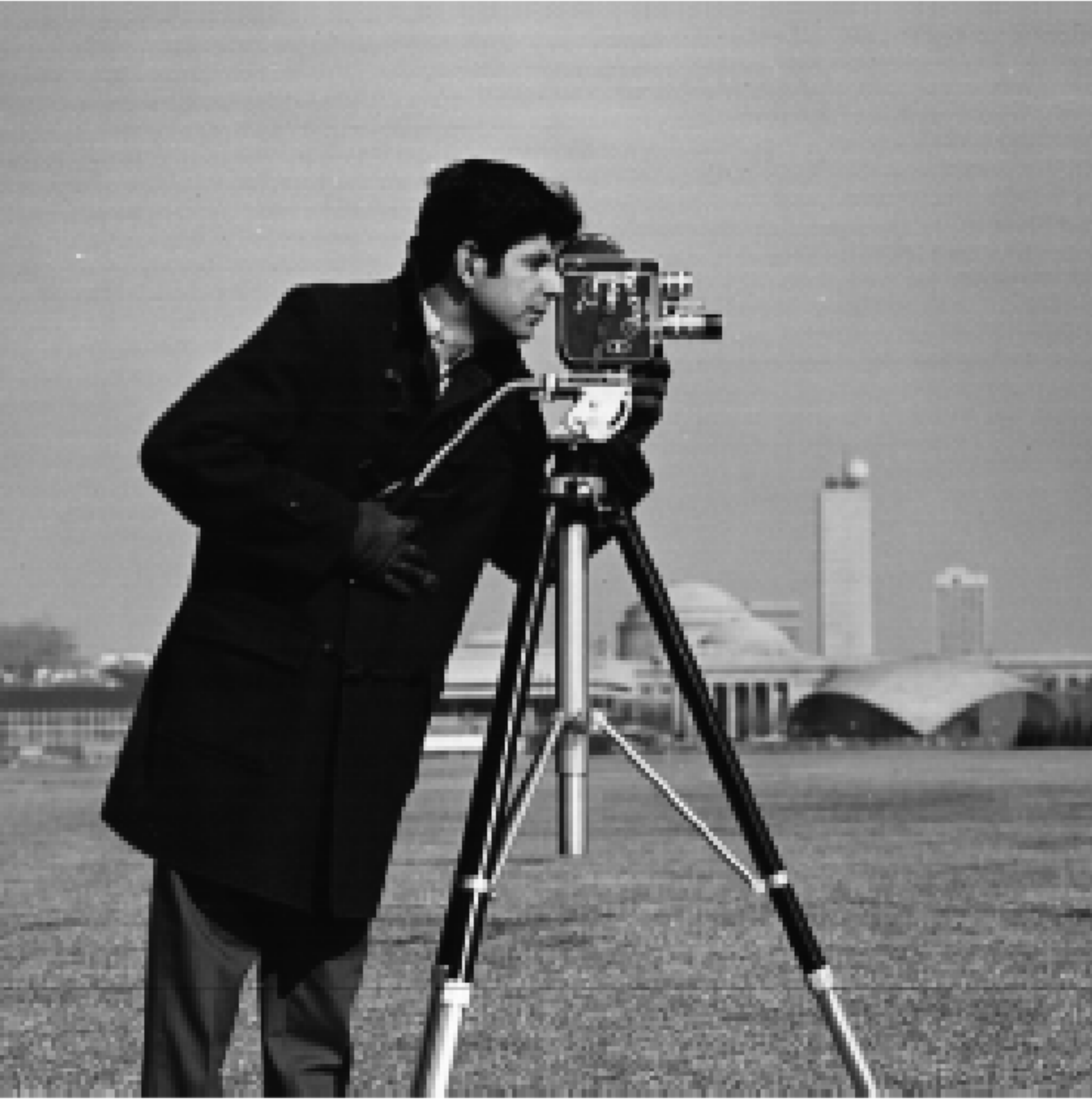}}\hspace{1cm}
\subfigure[]{\includegraphics[width=6.5cm]{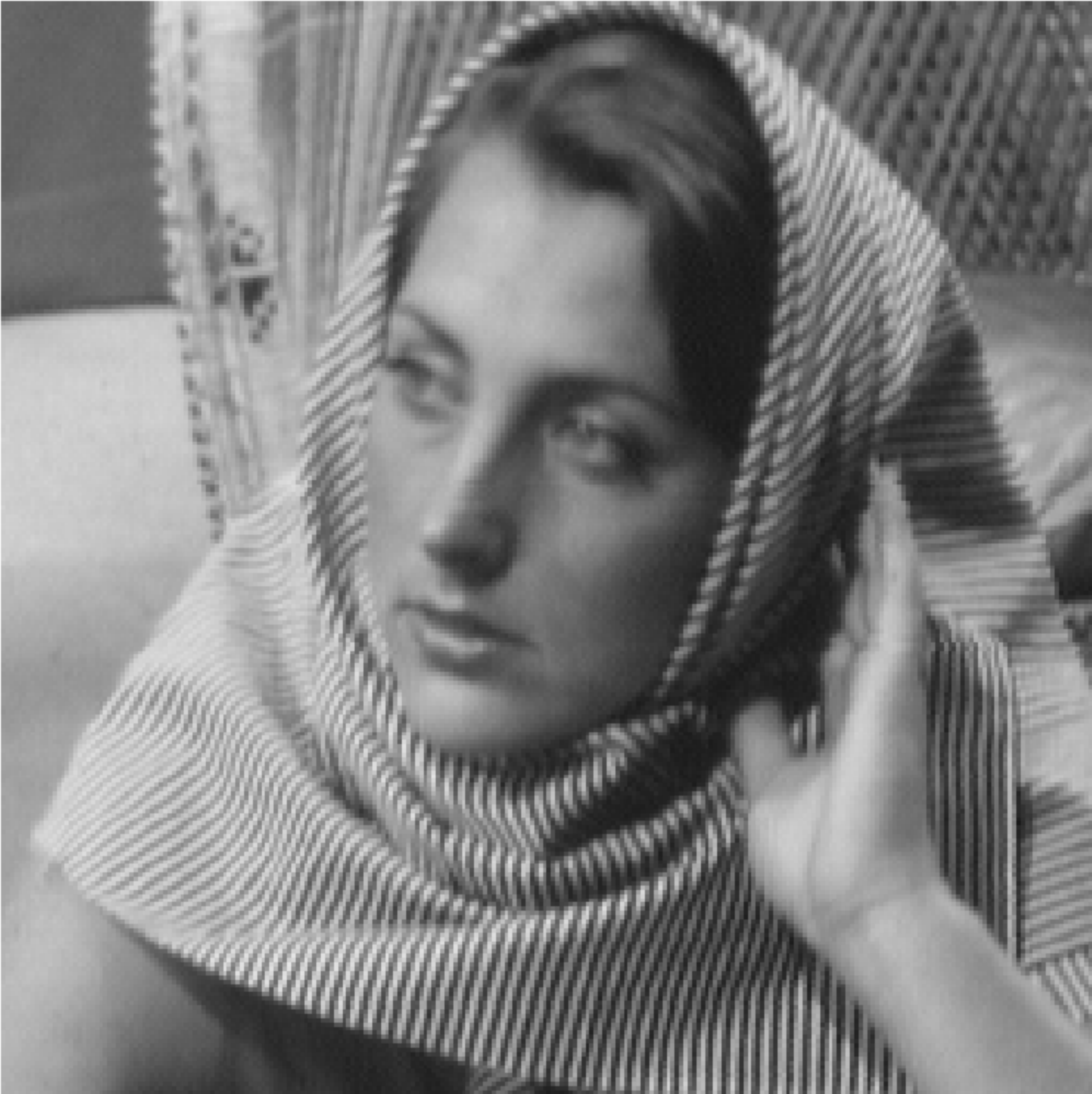}}
  \caption{(a) The real part and (b) the imaginary part of the test image CiB.}
  \label{fig:image}
\end{figure}%

We conduct a brief exploration of the optimal parameter for Gaussian-DRS. 
Our test image  is 256-by-256 Cameraman+ $\im$ Barbara (CiB). The resulting test object has the phase range $\pi/2$. 

We use three baseline algorithms as benchmark. The first is AAR. 

The second is Gaussian-DRS with $\rho=1$ 
\beq
\Gamma_1(u)&=&\half u+\half P_YR_X u\label{Gdrs}
\eeq
(since $\cR_Y$ in \eqref{180} is exactly $P_Y$ with $\rho=1$)
given the basic guarantee that for $\rho\ge 0$ the regular solutions are   attracting (Theorem \ref{thm:stable2}), that  for the range $\rho\ge 1$ no fixed points other than the regular solution(s) are locally attracting (Theorem \ref{thm:stable}) and that 
Gaussian-DRS with $\rho=1$ produces the best convergence rate for any $\rho \ge 1$ (Corollary \eqref{cor1}).
 The contrast between \eqref{Gdrs}  and AAR \eqref{dr} is noteworthy. 
The simplicity of the form \eqref{Gdrs} suggests the name {\em Averaged Projection Reflection} (APR) algorithm.

\begin{figure}
\centering
\subfigure[$\rho=1.1,\beta=0.9$]{\includegraphics[width=7cm]{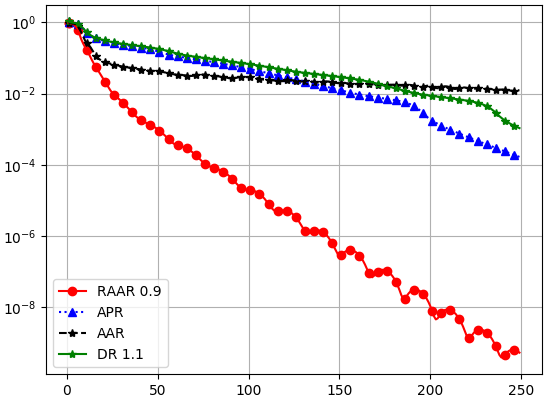}}
\subfigure[$\rho=0.5,\beta=0.9$]{\includegraphics[width=7cm]{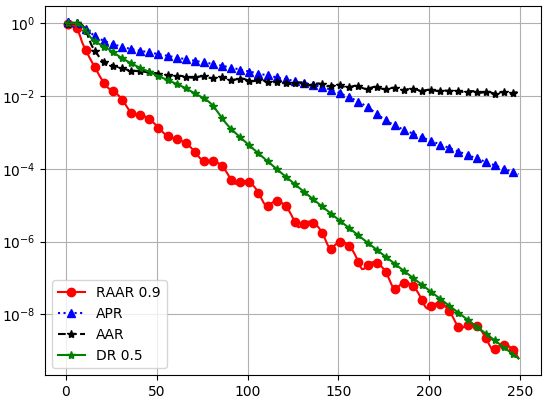}}
\subfigure[$\rho=0.3,\beta=0.9$]{\includegraphics[width=7cm]{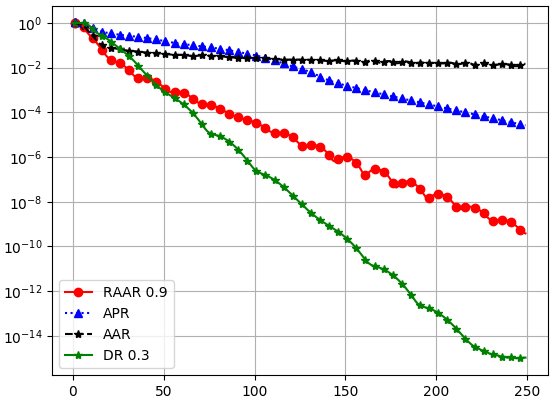}}
\subfigure[$\rho=0.1,\beta=0.9$]{\includegraphics[width=7cm]{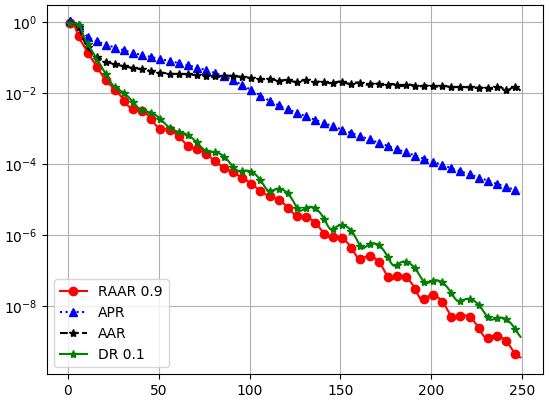}}
 \caption{Reconstruction (relative) error vs. iteration by various methods indicated in the legend with random initialization. 
 The straight-line feature (in all but AAR) in the semi-log plot indicates geometric convergence. 
 }
 \label{fig:raar}
 \end{figure}
 
The other,   the Relaxed AAR (RAAR),  is one of the best performing  phase retrieval algorithms  defined by the map
 \beq
 \label{raar}
 u_{k+1} &=& \beta\Gamma_0 (u_k)+(1-\beta)P_Y u_k,\quad \beta\in [\half,1],
 \eeq
 where $\Gamma_0$ is the Gaussian-DRS map with $\rho=0$ (i.e. AAR). 
 RAAR becomes AAR for $\beta=1$ (obviously) and AP for $\beta=\half$ (after some calculation) \cite{March16,Luke,Luke2}.

After some rearrangement  the fixed point equation for RAAR can be written as
\beqn
P_X x+P_X^\perp x=\beta P_X^\perp x+(P_X+(1-2\beta)P_X^\perp) P_Y x
\eeqn
from which it follows that 
\beqn
\px x= \px\py x, && \pxp x=- \lt({2\beta-1\over 1-\beta}\rt) \pxp\py x
\eeqn
and hence
\beq
\px x-\lt({1-\beta\over 2\beta-1}\rt) \pxp x&=& \px\py x+\pxp\py x=\py x.\label{411}
\eeq
Notably this is exactly the same fixed point equation for Gaussian-DRS with the corresponding parameter
\beq
\rho&=& {1-\beta\over 2\beta-1} \in [0,\infty) \label{beta}
\eeq
which tends to 0 and $\infty$ as $\beta$ tends to 1 and $\half$, respectively.
According to \cite{raar17} the optimal $\beta$ is usually between 0.8 and 0.9,  corresponding to $\rho=0.125$ and $0.333$ according to \eqref{beta}. We set  $\beta=0.9$ in Figure \ref{fig:raar}.

 In the experiments, we consider the setting of non-ptychographic phase retrieval with two coded diffraction patterns, one is the plane wave ($\mu=1$) and the other is $\mu=\exp(\im \theta)$ where $ \theta$  is independent  and uniformly distributed over $[0,2\pi)$. Theory of uniqueness of solution, up to a constant phase factor, is given in \cite{unique}.

 Figure \ref{fig:raar} shows the relative error (modulo a constant phase factor) versus iteration of RAAR ($\beta=0.9$ round-bullet solid line), APR (blue-triangle dotted line), AAR
 (black-star dashed line) and Gaussian-DRS with (a) $\rho=1.1, $ (b) $\rho=0.5,$ (c) $\rho=0.3$ and (d) $\rho=0.1$. 
 Note that the AAR, APR and RAAR lines vary slightly across different plots because of random initialization. 
 
 The straight-line feature (in all but AAR) in the semi-log plot indicates global geometric convergence. 
 The case with AAR is less clear in Figure \ref{fig:raar}. But it has been shown that the AAR sequence converges geometrically near the true object  (after applying $A^+$) but converges in power-law ($ \sim k^{-\alpha}$ with $\alpha\in [1,2]$) from random initialization \cite{FDR}.

 Figure \ref{fig:raar} shows  that  APR outperforms  AAR (consistent with the prediction of Corollary \ref{cor:rate1}) but underperforms RAAR. By decreasing $\rho$ to either $0.5$ or $0.1$, the performance of Gaussian-DRS closely matches that of RAAR. The optimal parameter
 appears to lie between  $0.1$ and $0.5$. For example, with $\rho=0.3,$ Gaussian-DRS significantly outperforms RAAR.  
The oscillatory behavior of Gaussian-DRS in (d) is due to the dominant complex eigenvalue of $J_A$.

}

\section{Blind ptychography algorithm}\label{sec:blind}

In the next two sections we apply the DRS methods to the more challenging problem of blind ptychography. 
In blind ptychography, we do not assume the full knowledge of the probe which is to be recovered simultaneously with the unknown object. 

Let $\nu^{0}$ and $g=\vee_\bt g^\bt$ be any pair
 of the probe and the object estimates producing  the same ptychography data as $\mu^{0}$ and $f$, i.e.
 the diffraction pattern of $\nu^\bt\odot g^\bt$ is identical to that of $\mu^\bt\odot f^\bt$ where
 $\nu^\bt$ is the $\bt$-shift of $\nu^{0}$ and $g^\bt$ is the restriction of $g$ to $\cM^\bt$. 
We refer to the pair $(\nu^0,g)$ as   a  blind-ptychographic solution (in the object domain) and
$(\mu^0,f)$ as the true
solution.

We can write  the total measurement data as $b=|\cF(\mu^0,f)|$ where $\cF$ is  the concatenated  oversampled Fourier transform acting on
$\{\mu^\bt\odot f^\bt:\bt \in \cT\}$  (see Appendix \ref{app:matrix}), i.e. a bi-linear transformation in the direct product of the probe space and the object space.  By definition, a blind-ptychographic solution $(\nu^0,g)$ satisfies  $|\cF(\nu^0, g)|=b$.

There are two ambiguities inherent to any blind ptychography.
 
The first is the affine phase ambiguity. Consider the probe and object estimates
\beq
\label{lp1}
\nu^{0}(\bn)&=&\mu^{0}(\bn) \exp(-\im a -\im \bw\cdot\bn),\quad\bn\in\cM^{0}\\
\label{lp2} g(\bn)&=& f(\bn) \exp(\im b+\im \bw\cdot\bn),\quad\bn\in \IZ^2_n
\eeq
for any $a,b\in \IR$ and $\bw\in \IR^2$.  For any $\bt$, we have the following
calculation
\beqn
\nu^\bt(\bn)&=&\nu^{0}(\bn-\bt)\\
&=&\mu^{0}(\bn-\bt) \exp(-\im \bw\cdot(\bn-\bt))\exp(-\im a)\\
&=&\mu^\bt(\bn) \exp(-\im \bw\cdot(\bn-\bt))\exp(-\im a)
\eeqn
and hence for all $\bn\in \cM^\bt, \bt\in\cT$
\beq
\label{drift2}
\nu^\bt(\bn) g^\bt(\bn)&=&\mu^\bt(\bn)f^\bt(\bn) \exp(\im(b-a))\exp(\im \bw\cdot\bt). 
\eeq
Clearly, \eqref{drift2} implies that $g$ and $\nu^{0}$ produce the same ptychographic data as $f$ and $\mu^{0}$ since
for each $\bt$, $\nu^\bt\odot g^\bt$ is a constant phase factor times $\mu^\bt\odot f^\bt$ {  where $\odot$ is the entry-wise (Hadamard) product}.  It is also clear that the above statement holds
true regardless of the set $\cT$ of shifts and the type of probe. 

In addition to the affine phase ambiguity \eqref{lp1}-\eqref{lp2},  a scaling factor ($g=c f, \nu^{0}=c^{-1} \mu^{0}, c>0$) is inherent to any blind ptychography. 
Note that when the probe is exactly known (i.e. $\nu^{0}=\mu^{0}$), neither ambiguity can occur.

Besides the inherent ambiguities,  blind ptychography imposes extra demands on the scanning
scheme. For example, there are many other ambiguities inherent to the regular raster scan:  $\cT=\{\bt_{kl}={\tau}(k,l): k, l\in \IZ\}$ unless the step size $\tau=1$.
Blind ptychography with a raster scan produces $\tau$-periodic  ambiguities called the raster scan pathology as well as
non-periodic ambiguities associated with block phase drift. The reader to referred to
\cite{raster} for a complete analysis of ambiguities associated with the raster scan.

A conceptually simple (though not necessarily the most practical) way to remove these ambiguities is  introducing small irregular perturbations to 
the raster scan with $\tau>m/2$, i.e. the overlap ratio greater than 50\% (see \eqref{rank1} and \eqref{rank2}).  For a thorough analysis of the conditions for blind ptychography,
we refer the reader to \cite{blind-ptych, raster}.

The basic strategy for blind ptychographic reconstruction is to alternately update the object and probe estimates
starting from an initial guess as outlined in Algorithm 1 \cite{DM08,probe09,pum}.  
\begin{algorithm}
\caption{Alternating minimization (AM)}\label{alg: suedo algorithm}
\begin{algorithmic}[1]
\State Input: initial probe guess $\mathbf{\mu}_1$ and object guess $f_1$. 
\State Update the object estimate  
$\quad
f_{k+1}=\arg\min L(A_kg)$ s.t. $g\in \IC^{n\times n}$.
\State  Update the probe estimate  
$\quad
\mu_{k+1}=\arg\min L(B_k\nu)$ s.t.
$\nu\in \IC^{m\times m}$. 
\State Terminate if $\||B_k\mu_{k+1}|- b\| $ stagnates or is less than tolerance; otherwise, go back to step 2 with $k\rightarrow k+1.$
\end{algorithmic}
\end{algorithm}

We solve the inner loops (step 2 and 3 in Algorithm 1) and update the object and probe estimates 
by the DRS methods where $A_k h:=\cF(\mu_k, h), \forall h\in \IC^{n^2}, $ defines a matrix $A_k$  for the $k$-th probe estimate $\mu_k$ and $B_k\eta :=\cF(\eta, f_{k+1}), \forall \eta\in \IC^{m^2},$  for  the $(k+1)$-st image estimate $f_{k+1}$.  

For ease of reference, we denote
Algorithm 1 with Gaussian-DRS and Poisson-DRS by Gaussian-DRSAM and Poisson-DRSAM, respectively. 

\subsection{Initialization}\label{sec:ppc}
 For non-convex iterative optimization, a good initial guess  or some regularization is usually crucial for convergence \cite{ML12,Poisson2}. The initialization step is often glossed over in the development of numerical schemes. This is even more so for blind ptychography which is doubly non-convex because, in addition to the phase retrieval step, extracting the probe and the object from their product is also non-convex.

We say  that a probe estimate $\nu^0$ satisfies PPC$(\delta)$ (standing for the probe phase constraint) if 
\beq\label{PPC}
\measuredangle (\nu^0(\bn),\mu^0(\bn))<\delta\pi,\quad \forall\bn 
\eeq
where  $\delta\in (0, 1/2]$ is the uncertainty parameter. 

PPC$(\delta)$ defines an alternative measure to the standard norm-based metric. 
Our default case is $\delta=0.5$ with which  PPC  is equivalent to 
$\Re(\bar \nu^0\odot \mu^0)>0$ (where the bar denotes the complex conjugate) has the intuitive meaning that at every pixel $\nu^0$ and  $\mu^0$ point to the same half plane in $\IC$ (Figure \ref{fig:MPC}). 

 \begin{figure}
 \centering
 \includegraphics[width=7cm]{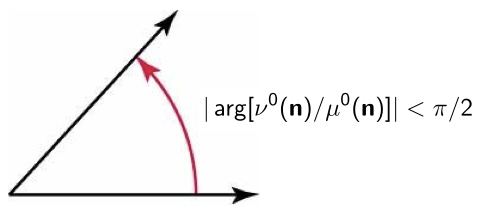}
\caption{$\nu^0$ satisfies MPC if $\nu_0(\bn)$ and $\mu^0(\bn)$ form an acute angle for all $\bn$.}
\label{fig:MPC}
\end{figure}

Our initialization method is inspired by the uniqueness theory in \cite{blind-ptych} which proves PPC (0.5) is required to remove
all other ambiguities than the inherent ones (the affine phase factor and the constant scaling factor).

Under PPC,  however, the initial probe  may be  significantly far away from the true probe in
norm. Even if  $|\mu_1(\bn)|=|\mu^{0}(\bn)|=\mbox{const.}$, 
 the probe guess with uniformly distributed $\phi$ in $(-\pi/2, \pi/2]$  has the relative error close to
\[
\sqrt{{1\over \pi}\int^{\pi/2}_{-\pi/2} |e^{\im\phi}-1|^2 d\phi}=\sqrt{2(1-{2\over \pi})}\approx   0.8525 
\]
with high probability. 
We use \eqref{PPC} for selecting and quantifying initialization, instead of the usual 2-norm.  
Non-blind ptychography gives rise to infinitesimally small $\delta$. In practice, \eqref{PPC} needs only to hold for sufficiently large number of pixels $\bn$.  

In summary, 
in our numerical experiments we use the following probe initialization denoted by PPC
\beq
\label{probe-initial}
\mu_1 (\mathbf{n})=\mu^0(\mathbf{n})\,\exp{\lt[\im 2\pi\frac{ \mathbf{k}\cdot\mathbf{n}}{n}\rt]}\,\exp{[\im \phi(\mathbf{n})]},\ \ \ \mathbf{n}\in \mathcal{M}^0
\eeq
where $\phi(\bn)$ are independently and uniformly distributed on $(-\pi/2, \pi/2)$. 
In our numerical experiments, PPC  results in geometric convergence for any $\bk$ (even though the limiting solution may end up with a different $\bk$ as allowed  by linear phase ambiguity).

\section{Numerical experiments for blind ptychography}
\label{sec:num}

 We test the DRS methods with $\rho=1$ for blind ptychography and demonstrate that even with this far from optimal parameter (cf. Corollary \ref{cor1} and Figure \ref{fig:raar}), DRSAM converges geometrically under  the nearly minimum conditions established in the uniqueness theory \cite{blind-ptych} (see also Section \ref{sec:ppc} and Section \ref{sec:scan}). 

The inner loops of Gaussian DRSAM become
\beqn
u_{k}^{l+1} &= &\frac{1}{2}u_{k}^{l} +\frac{1}{2}b\odot \sgn\big(R_ku_{k}^{l}\big)\\
v_{k}^{l+1}& =& \frac{1}{2} v_{k}^{l}+\frac{1}{2}b\odot \sgn{\Big(S_k v_{k}^{l}\Big)}. 
\eeqn
and the inner loops of the Poisson DRSAM become
 \beq\label{770} u_{k}^{l+1}& =& \frac{1}{2}u_{k}^{l} -\frac{1}{3}R_k u_{k}^{l}+\frac{1}{6} \sgn{\Big(R_k u_{k}^{l}\Big)}\odot \sqrt{|R_k u^l_k|^2+24b^2} \\
 v_{k}^{l+1} &=& \frac{1}{2} v_{k}^{l} -\frac{1}{3}S_k v_{k}^{l}+\frac{1}{6}\sgn{\Big(S_k v_{k}^{l}\Big)}\odot \sqrt{|S_k v_{k}^{l}|^2+24b^2}.\label{771}
\eeq
Here 
$R_k=2P_k-I$ is the  reflector corresponding to the projector $P_k:=A_kA_k^+ $ and $S_k$ is the reflector corresponding to the projector  $Q_k:=B_kB_k^+ $. 
We set  $u^1_k=u^{\infty}_{k-1}$ where $u^{\infty}_{k-1}$ is the terminal value
at epoch $k-1$ and $v^1_k=v^{\infty}_{k-1}$ where $v^{\infty}_{k-1}$ is the terminal value
at epoch $k-1$.

 \subsection{Test objects}
\begin{figure}[t]
\centering
 \subfigure[RPP magnitudes]{\includegraphics[width=6.5cm]{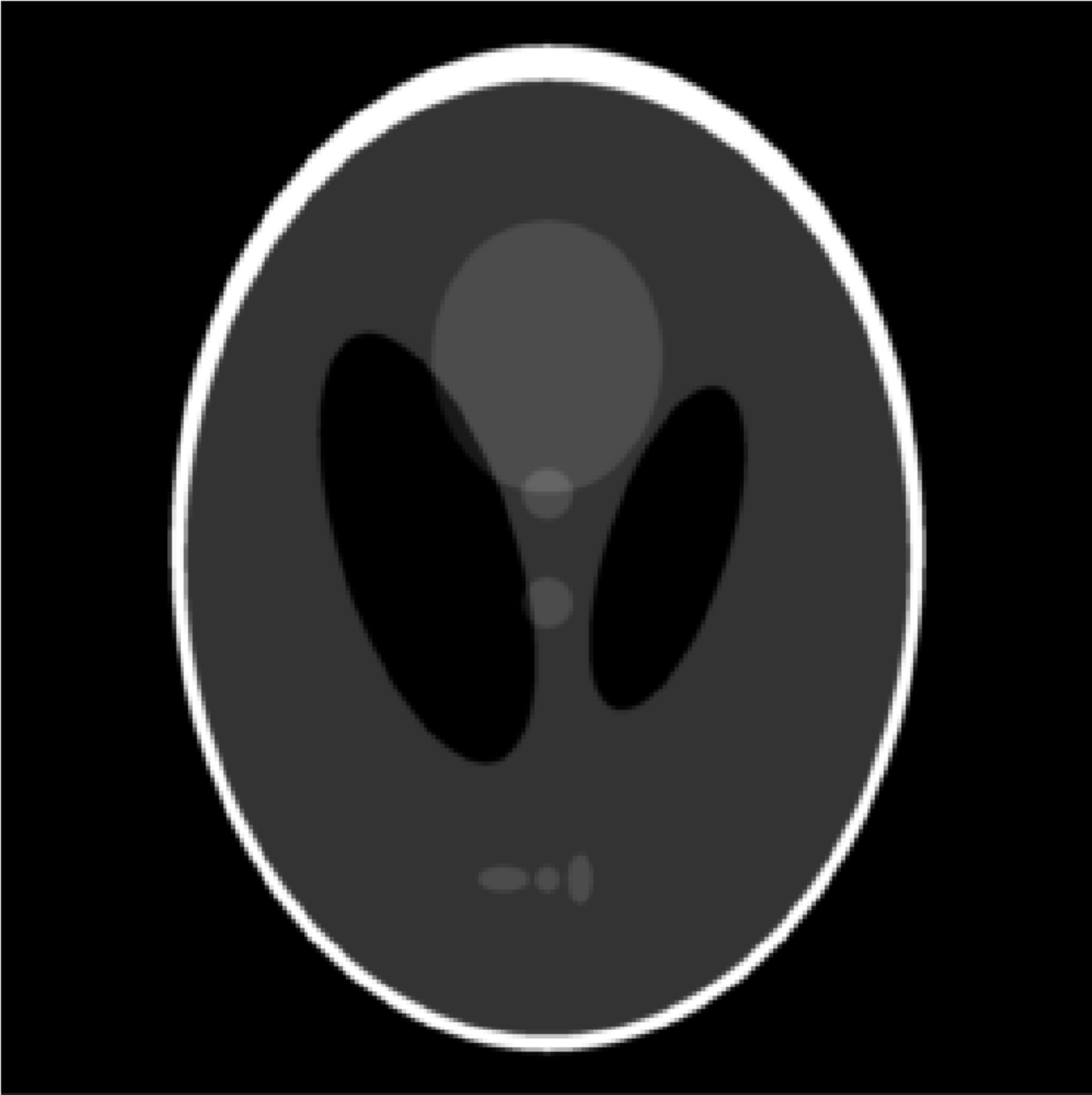}}\hspace{0.7cm}
\subfigure[RPP phases ]{\includegraphics[width=7cm,height=6.6cm]{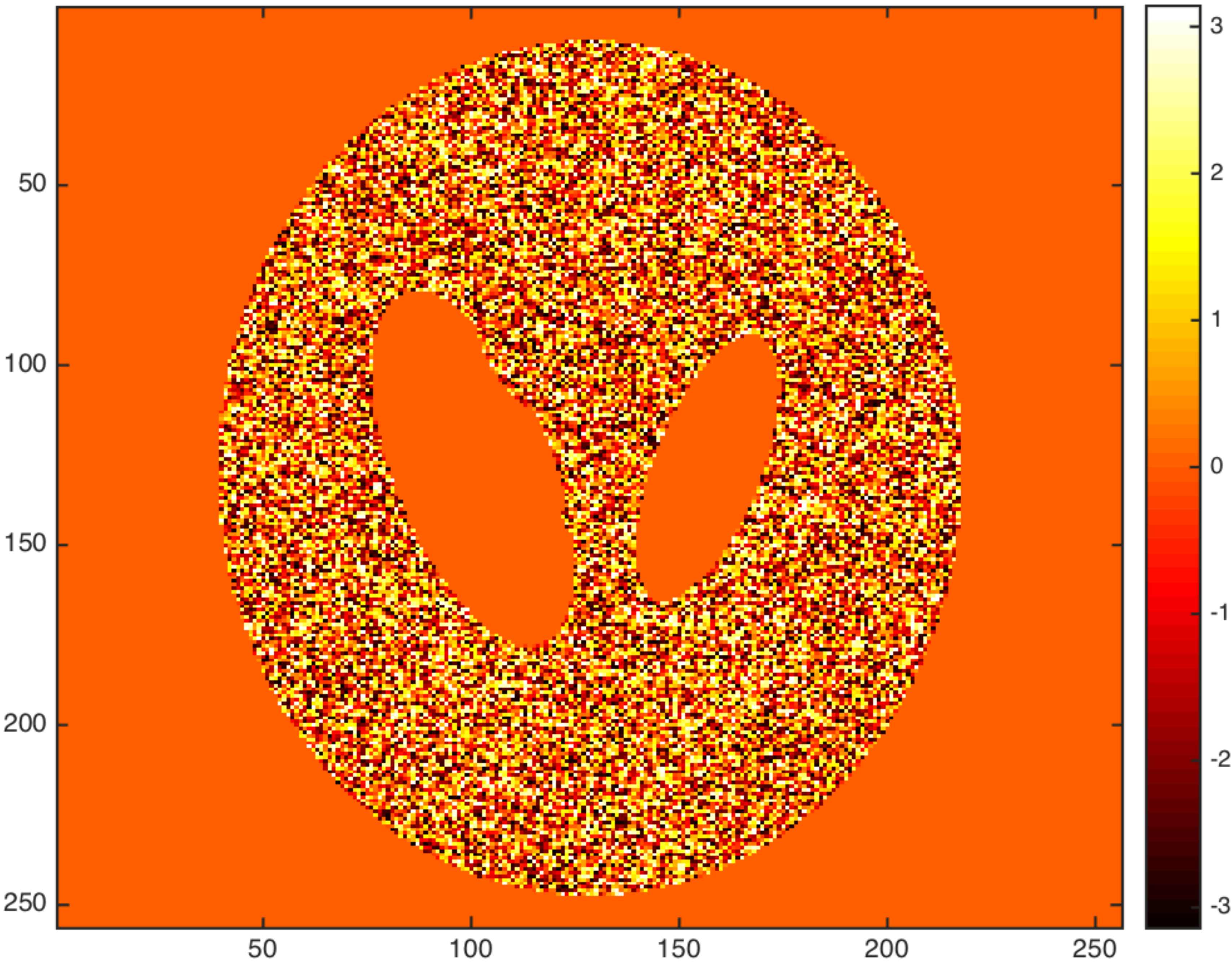}}  \\
 \caption{(a) Magnitudes and (b) phases of  RPP. }
 \label{fig:RPP}
 \end{figure}

In addition to CiB, 
our second test object is randomly-phased phantom (RPP) defined by $
f=P\odot  e^{\im\phi}
$
where $P$ is the standard phantom (Fig. \ref{fig:RPP}(a)) and  $\{\phi(\bn)\}$ are i.i.d. uniform random variables over $[0,2\pi]$. 
RPP has the maximal phase range because of its noise-like phase profile.  In addition to the huge phase range, RPP has loosely supported parts with respect to the measurement schemes (see below) due to its thick dark margins around the oval. 

The third test object is the salted RPP, the sum of
RPP and the salt noise (not shown). The salted noise is i.i.d, binomial random variables  with  probability $0.02$ to be a complex constant in the form of $a(1+\im), a\in \IR, $ and probability $0.98$ to be zero.  The salt noise reduces  the support looseness  without significantly changing the original image making  the salted RPP more connected with respect to the ptychographic measurement.  
\subsection{Probe function}

We use 
a randomly phased probe with the unknown transmission function  
$
\mu^0(\bn)=e^{\im \theta(\bn)}
$
where  $\theta(\bn)$ are random variables.   Randomly phased probes have been adopted in ptychographic experiments
\cite{zone-plate,ptycho-rpi,random-aperture,diffuser}.

\begin{figure}
\centering
\subfigure[i.i.d. probe]{\includegraphics[width=7cm]{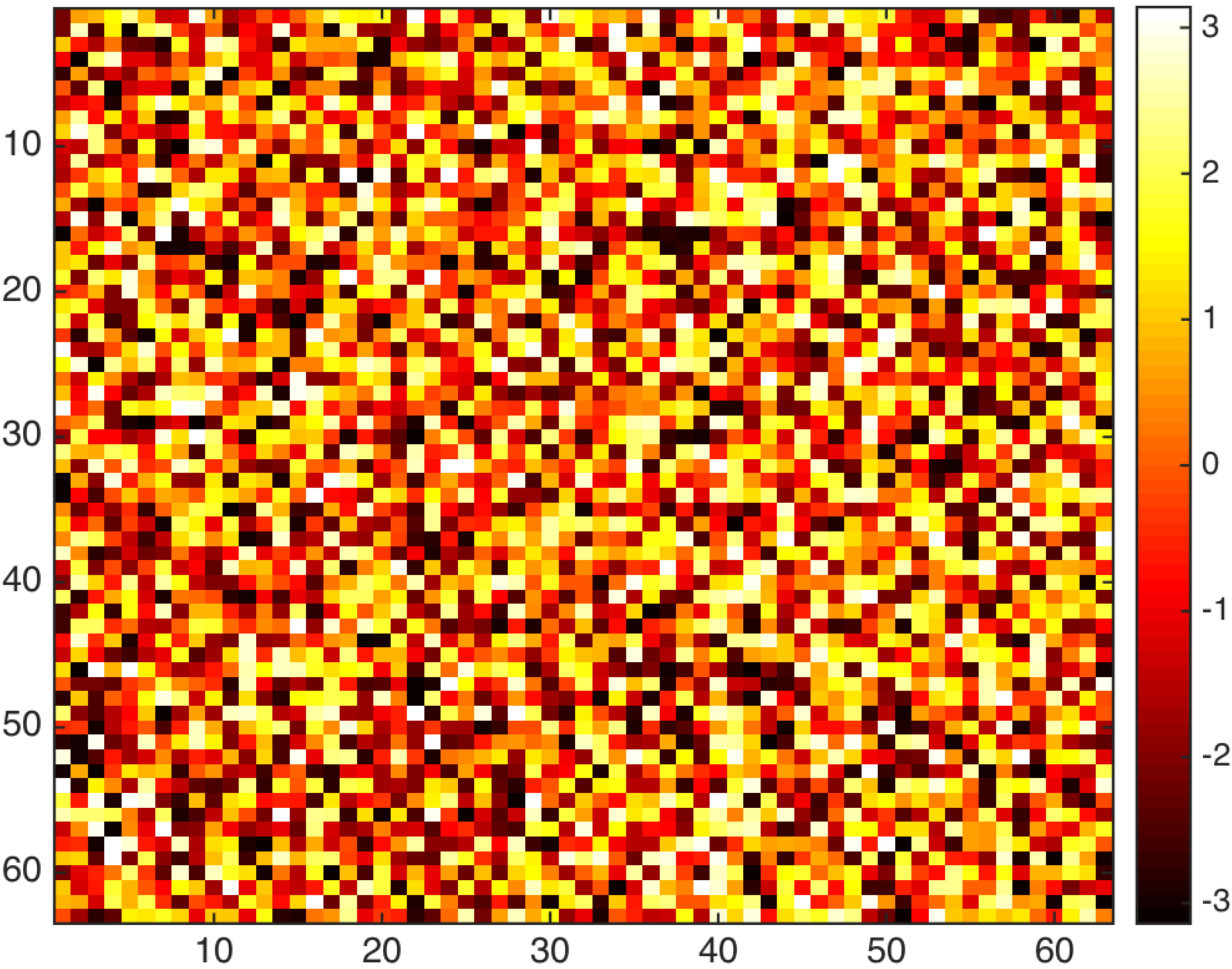}}\quad
\subfigure[Correlated probe $c=0.4$]{\includegraphics[width=7cm]{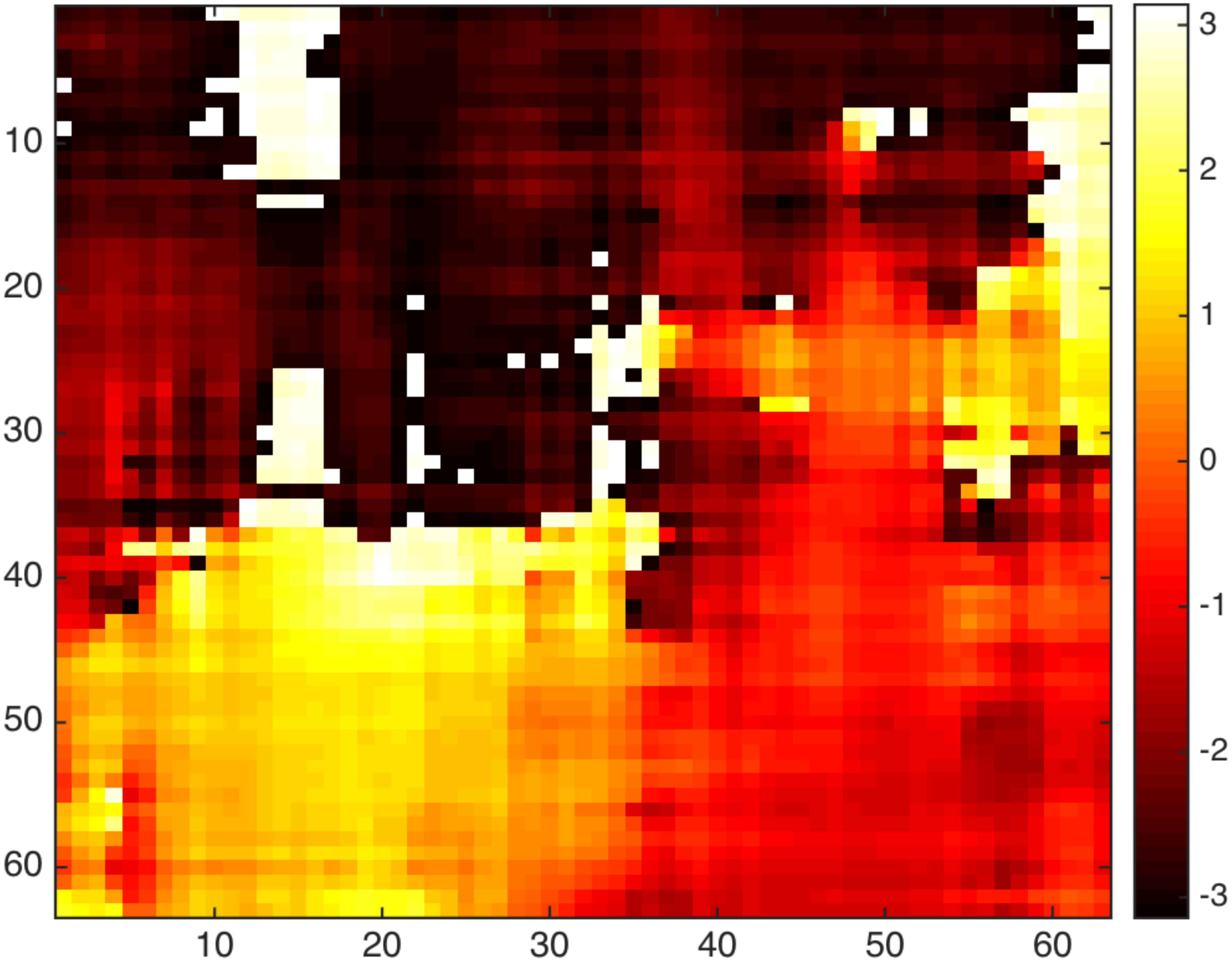}}\\
\subfigure[Correlated probe $c=0.7$]{\includegraphics[width=7cm]{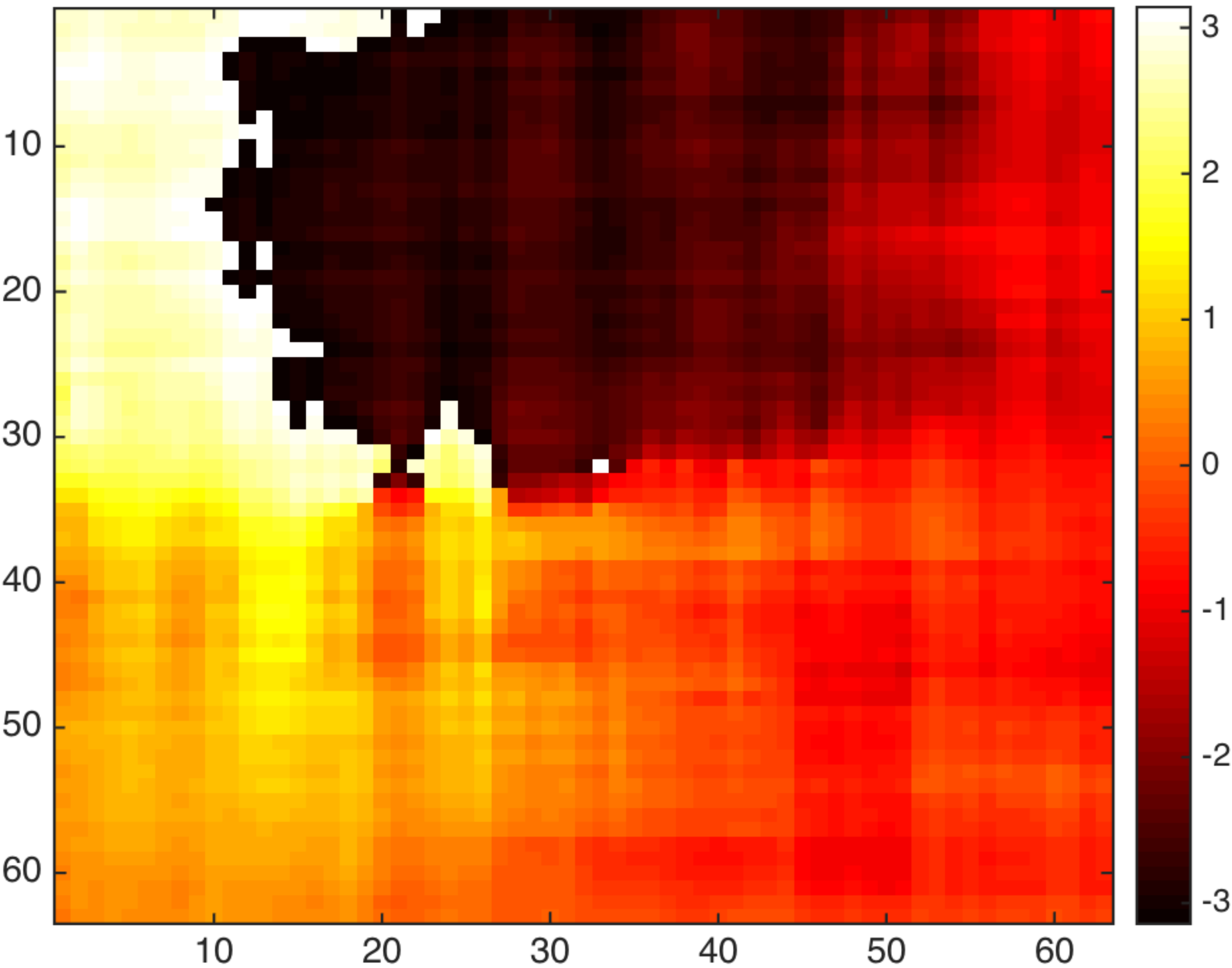}}\quad
\subfigure[Correlated probe $c=1$]{\includegraphics[width=7cm]{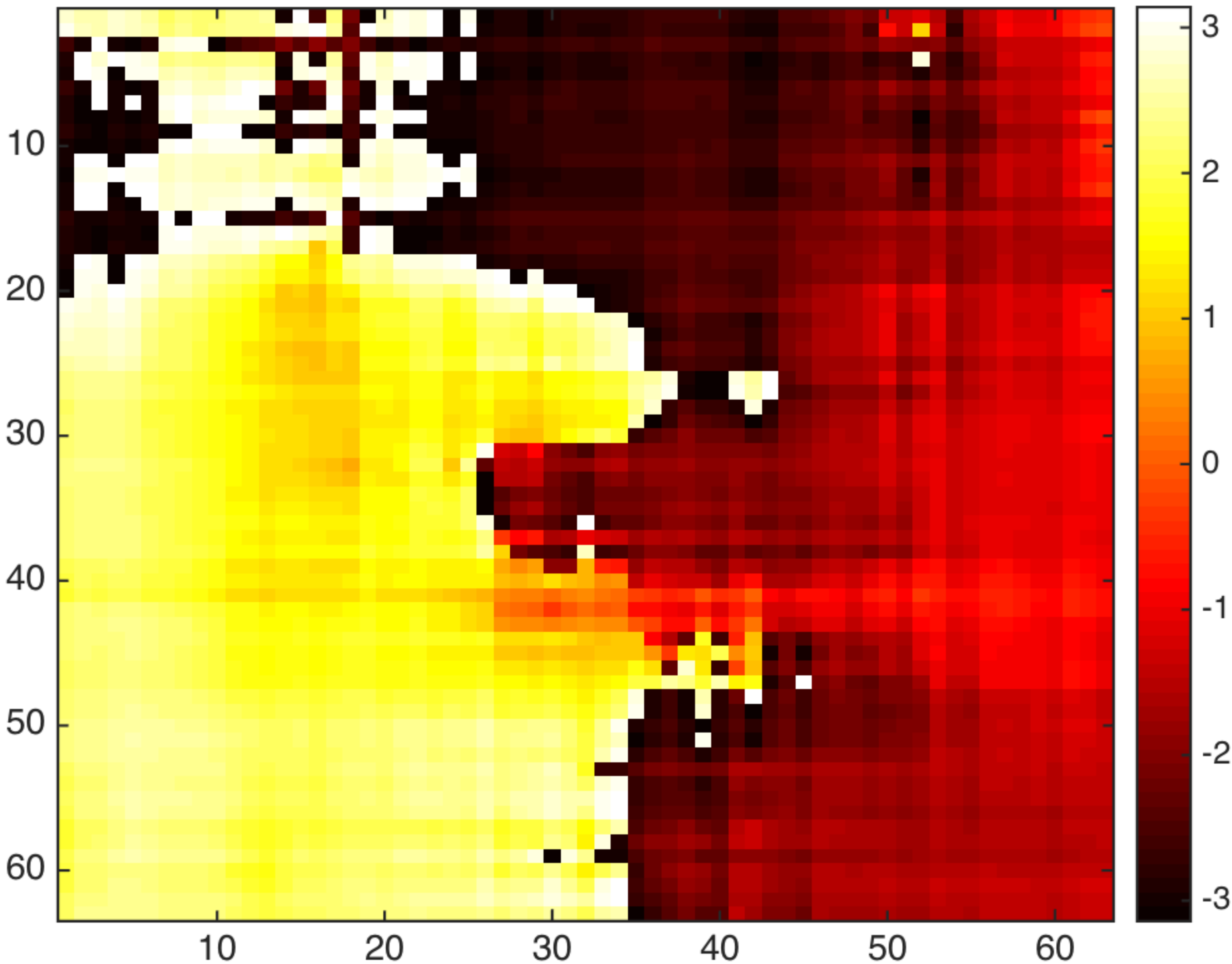}}
\caption{The phase profile of (a) the i.i.d. probe and (b)(c)(d) the correlated probes of various correlation lengths.}
\label{fig:probes}
\end{figure}
We do not explore the issue of varying 
the probe size in the present work, which was carried out for AAR in \cite{ptych-unique}.
We fix the probe size to $60\times 60$. In addition to the i.i.d. probe, we consider also correlated probe
produced by convolving the i.i.d. probe with characteristic function of the set $\{(k_1,k_2 )\in \mathbb{Z}^2 : \max\{|k_1|,|k_2|\} \leq c\cdot m;\ c\in (0,1]  \}$ where  the constant $c$ is a measure
of the correlation length in the unit of $m=60$ (Fig. \ref{fig:probes}).

\subsection{Error metrics for blind ptychography}

 We use relative error (RE) and relative residual (RR) as the merit metrics  for the recovered image $f_k$ and probe $\mu_k$ at the $k^{th}$ epoch:
\beq\label{RE}
\mbox{RE}(k)&= &\min_{\alpha\in \mathbb{C}, \mathbf{r} \in \mathbb{R}^2}\frac{\sqrt{\sum_\bn|f(\bn) - \alpha e^{-\im {2\pi}\mathbf{n}\cdot \mathbf{r}/n} f_k(\bn)|^2}}{\|f\|}\\
\mbox{\rm RR}(k)&= & \frac{\|b- |A_kf_{k}|\|}{\|b\|}.
\eeq
Note that in \eqref{RE} both the affine phase and the scaling factors are discounted. 

\subsection{Sampling schemes} \label{sec:scan}
 \begin{figure}
\centering
\subfigure[Perturbed grid given by \eqref{rank1}]{\includegraphics[width=6cm,height=5.7cm]{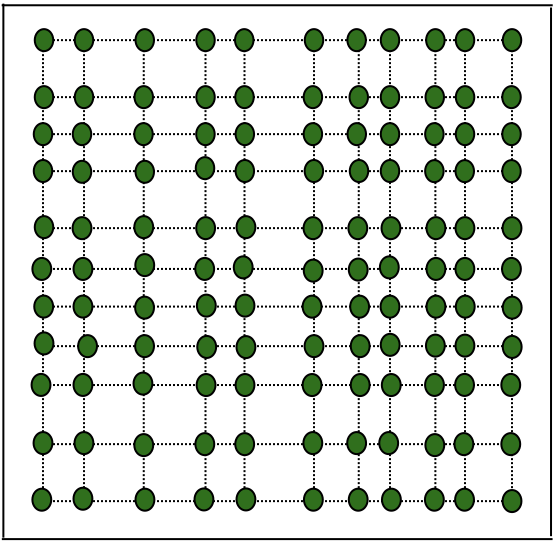}}\hspace{1cm}
\subfigure[Perturbed grid given by \eqref{rank2}]{\includegraphics[width=6cm,height=5.7cm]{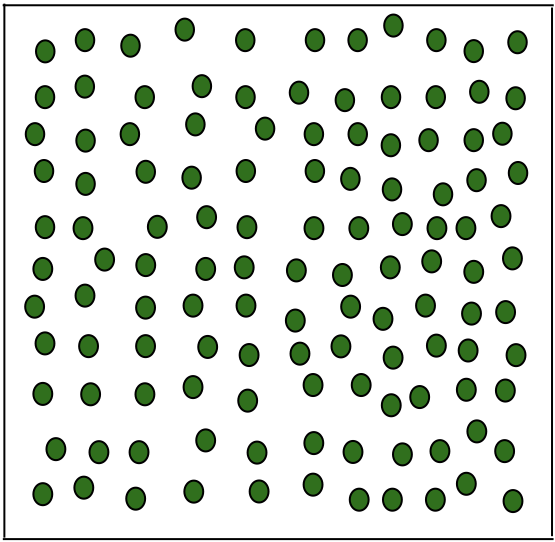}}
\caption{Two perturbed raster scans}
\label{fig:irregular}
\end{figure}

The uniqueness theorem for blind ptychography \cite{raster} holds for the following irregularly perturbed  raster scans
\beq
\label{rank1}
\mbox{\rm Rank-one perturbation}\quad \bt_{kl}=\tau(k,l)+(\delta^1_k, \delta^2_l),\quad k,l\in \IZ
\eeq
where $\delta_{k}^1$ and $\delta^2_l$ are small random variables relative to $\tau$. 
The other is 
\beq
\label{rank2}
\mbox{\rm Full-rank perturbation}\quad \bt_{kl}=\tau (k,l)+(\delta^1_{kl},\delta^2_{kl}), \quad k,l\in \IZ
\eeq
 where $\delta_{kl}^1$ and $\delta^2_{kl}$ are small random variables relative to $\tau$. 
 Here the stepsize $\tau<m/2$ corresponding to the overlap ratio greater than $50\%$. 
 The $50\%$ overlap ratio has been proved to be a nearly minimum requirement for uniqueness with the perturbed raster scans.
 
 We let  $\delta_{k}^1$ and $\delta^2_l$ in the rank-one scheme \eqref{rank1} and 
 $\delta_{kl}^1$ and $\delta^2_{kl}$ in the full-rank scheme \eqref{rank2} to be {i.i.d. uniform random variables  over $\lb -4,4\rb$}. 
In other words, the adjacent probes overlap by an average of $\tau/m=50\%$.

\subsection{Different combinations}
First we compare performance of DRSAM with different combinations of loss functions, scanning schemes and
random probes in the case of  noiseless measurements with  the periodic boundary condition. 
We use the stopping  criteria for the inner loops: 
\begin{equation*}
\frac{\| |P_k u_{k}^{l}|-b\|-\||P_k u_{k}^{l+1}|-b\|}{\||P_k u_{k}^{l}|-b\|} \leq 10^{-4}
\end{equation*} 
with the maximum number of iterations capped at 60.

\begin{figure}
\centering 
\subfigure[]{\includegraphics[width=7cm]{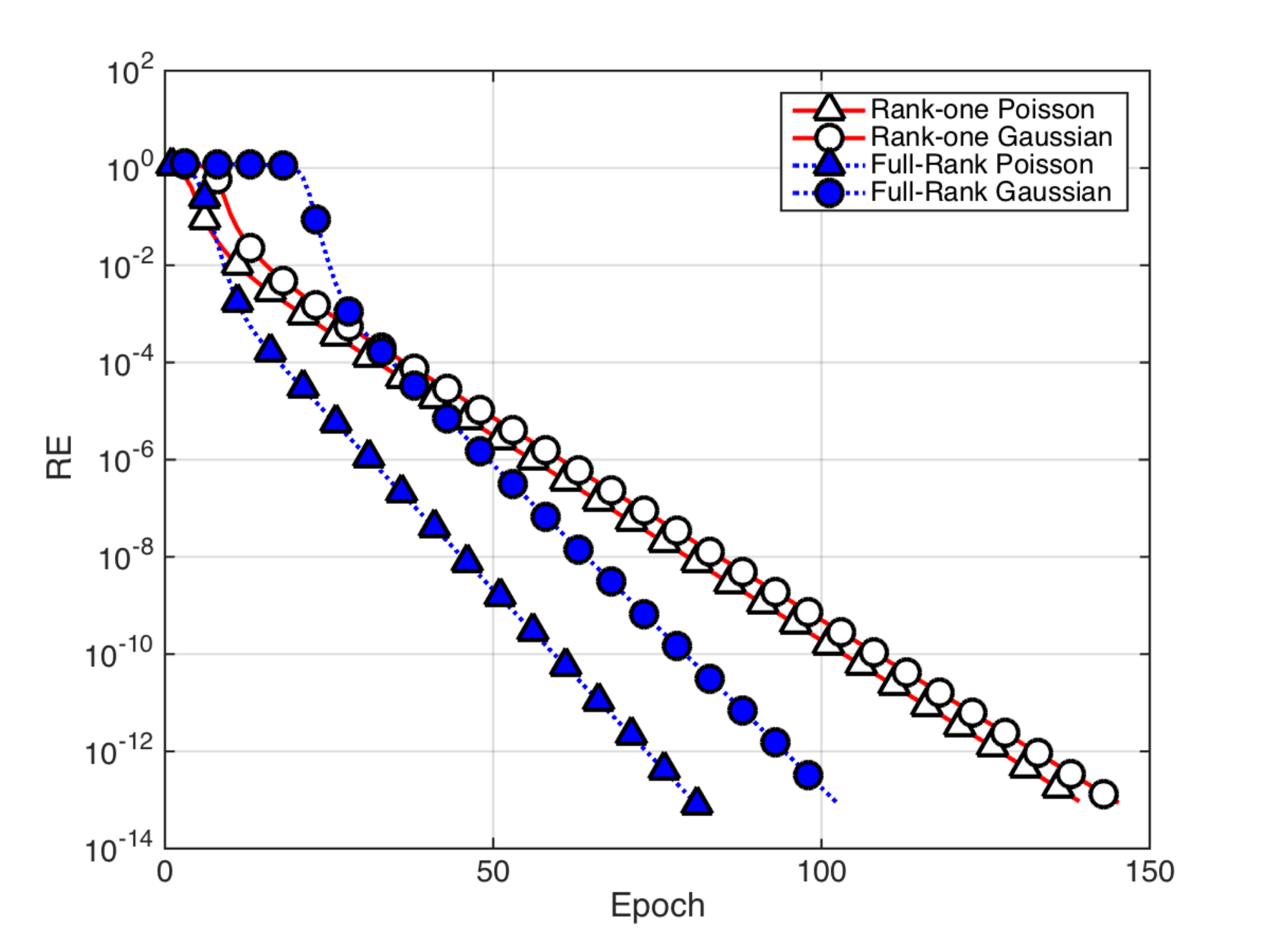}}
\subfigure[]{\includegraphics[width=7cm]{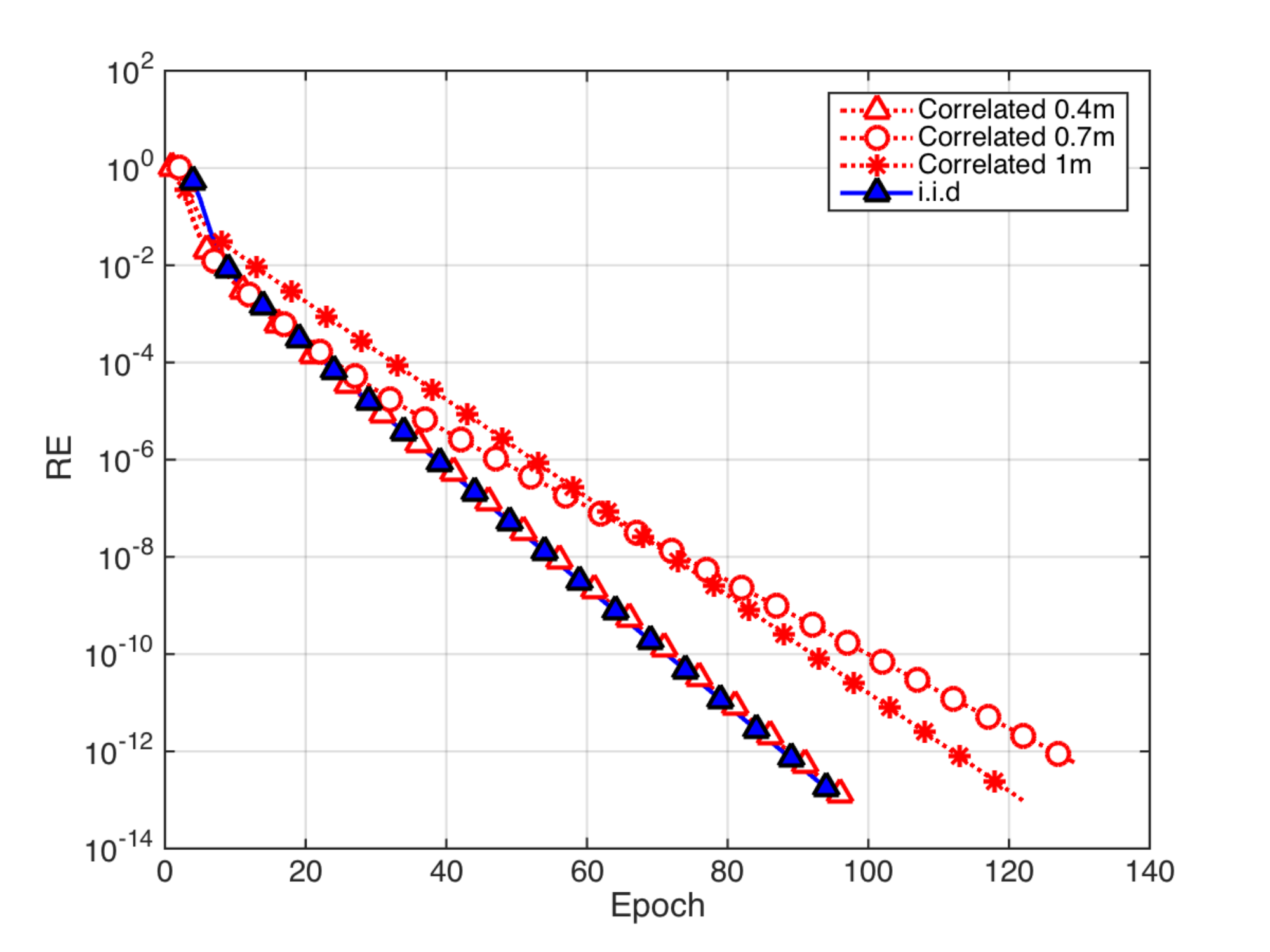}} \\
  \caption{ Geometric convergence to CiB at various rates for (a) Four combinations of loss functions and scanning schemes with i.i.d. probe (rank-one Poisson, $\mbox{rate}=0.8236$; rank-one Gaussian, $\mbox{rate}=0.8258$; full-rank Poisson, $\mbox{rate}=0.7205$; full-rank Gaussian, $\mbox{rate}=0.7373$) and (b) Poisson-DRS with four probes of different correlation lengths ($\mbox{rate}=0.7583$ for $c=0.4$; $\mbox{rate}=0.8394$ for  $c=0.7$; $\mbox{rate}=0.7932$ for $c=1$; $\mbox{rate}=0.7562$ for iid probe)
  }
  \label{fig1}
  \label{fig2}
\end{figure}%

Figure \ref{fig1} shows geometric decay of RE \eqref{RE}  at various rates for the test object CiB. 
In particular, Fig. \ref{fig1}(a) shows that the full-rank scheme outperforms the rank-one scheme
and that Poisson-DRS outperforms (slightly) Gaussian-DRS while
Figure \ref{fig2}(b) shows that the i.i.d. probe yields the smallest rate of convergence ($=0.7562$) closely followed by
the rate ($=0.7583$)  for $c=0.4$.

\subsection{Boundary conditions}

 \begin{figure}
\centering
 \subfigure[Max iteration for inner loops = 80]{\includegraphics[width=7cm]{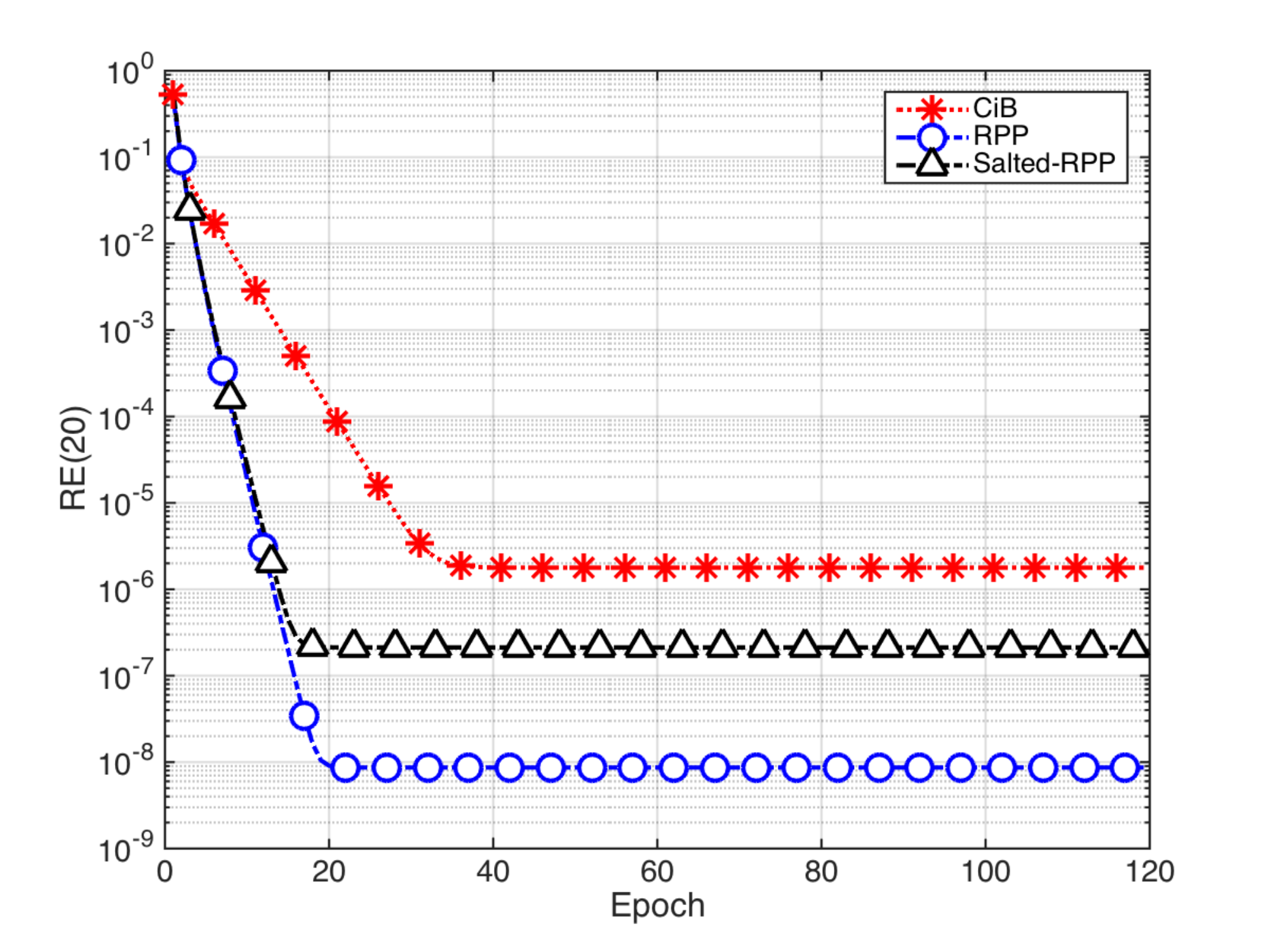}}
 \subfigure[Max iteration for inner loops = 110]{\includegraphics[width=7cm]{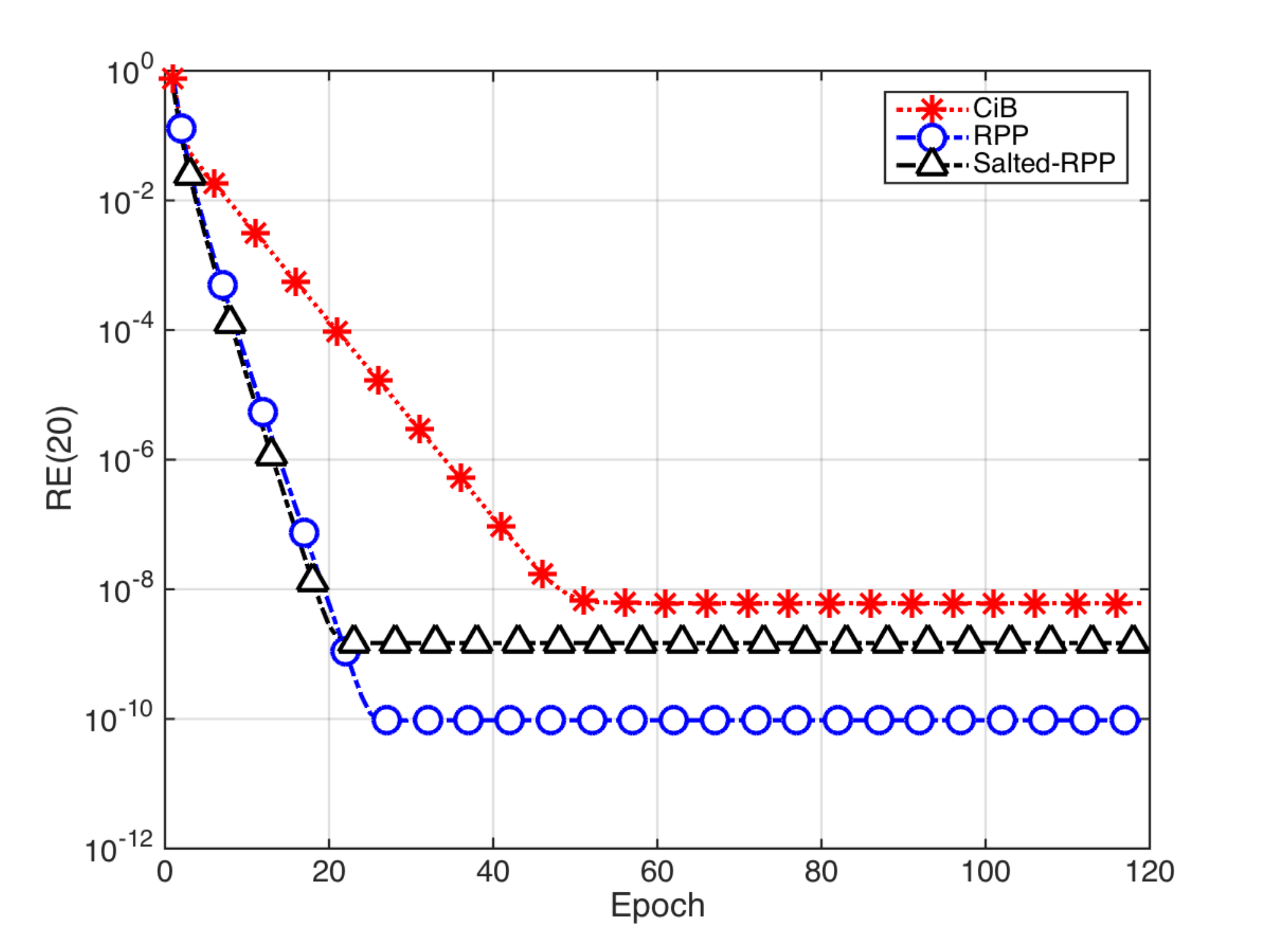}}
\caption{RE2 under  the bright-field condition = 255. }
\label{fig:bright}
\end{figure}

The periodic boundary condition conveniently  treats all diffraction patterns and object pixels
in the same way by assuming that $\IZ_n^2$ is a (discrete) torus. The periodic boundary condition generally forces 
the slope $\br$ in the affine phase ambiguity to be integers.
For 3D blind tomography, however, different linear phase ramps  from different projections would collectively create enormous 3D ambiguities 
that are difficult to make consistent and hence it is highly desirable to remove the linear phase ambiguity early on in the process.

To this end, we consider the non-periodic bright-field  boundary conditions taking on some nonzero value in $\cM\setminus \IZ_n^2$. 
We aim to show that the affine phase ambiguity is absent under the bright-field boundary condition.

We test the Poisson-DRSAM with the full-rank scheme   with  a more stringent  error metric
\beq\label{RE2}
{\mbox{RE2}}(k)&= &\min_{\theta\in \IR}\frac{\| f -e^{\im\theta} f_k\|}{\|f\|}. 
\eeq

We also use the less tolerant  stopping rule
\begin{equation*}
\frac{\| |P_k u_{k}^{l}|-b\|-\||P_k u_{k}^{l+1}|-b\|}{\||P_k u_{k}^{l}|-b\|} \leq 10^{-5}
\end{equation*} 
for the inner loops with the maximum number of iteration  capped at 80.

Fig. \ref{fig:bright} demonstrates the capability of the bright-field boundary condition ($=255$) to eliminate the linear phase ambiguity
as the stronger error metric \eqref{RE2} decays geometrically before settling down to the final level of accuracy. 
 The final level of accuracy, however, depends on how accurately the inner loops for each epoch are solved. For example, increasing the maximum number of iteration from 80 (Figure \ref{fig:bright}(a))  to 110  (Figure \ref{fig:bright}(b)), significantly enhances the final accuracy
of reconstruction. 

We also see that the bright-field condition enforcement has a better result on RPP than CiB. 

\subsection{Comparison with rPIE}

\begin{figure}[t]
\centering
\subfigure[rPIE w. PPC(0.025) \commentout{($0,0,\frac{1}{40}$)}]{\includegraphics[width=7cm]{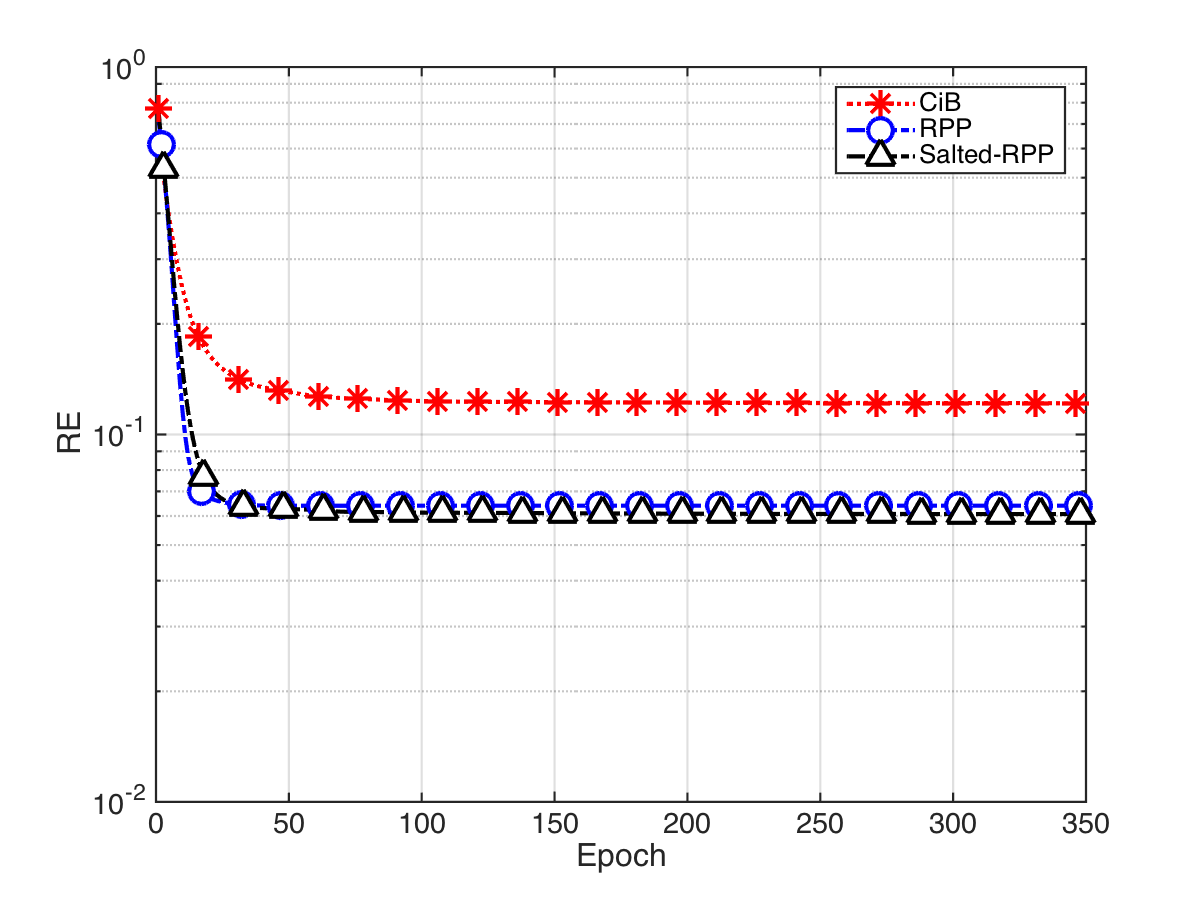}}
\subfigure[Gaussian-DRSAM with PPC(0.5) \commentout{w. PPC(-1,1,0.5)}]{\includegraphics[width=7cm]{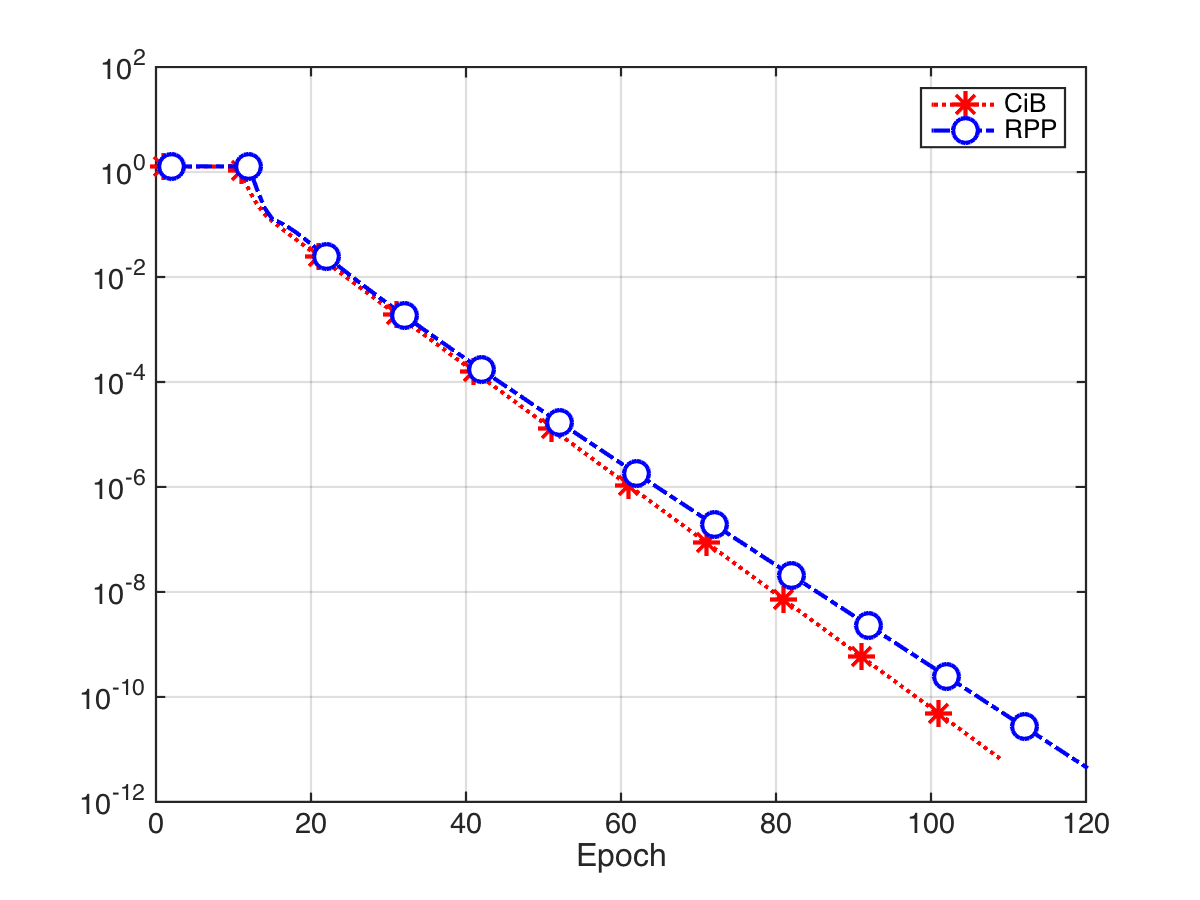}}
\caption{RE versus epoch for blind ptychography for various objects indicated in the legend by (a) rPIE and (b) DRSAM (RPP rate = 0.8015;
CiB rate=0.7787\commentout{with the linear phase factor $\mathbf{k}=(-1,1.0517)$; with linear phase factor $\mathbf{k}=(-1,1)$}).  }
 \label{fig:rPIE}
 \end{figure}

In this section, we compare the performance of DRSAM in Fig. \ref{fig:bright} (a) with that of the regularized PIE (rPIE) \cite{rPIE17}, the most up-to-date version of ptychographic iterative engine (PIE).

Instead of using all the 64 diffraction patterns simultaneously to update the object and probe estimates, rPIE uses
one diffraction pattern at a time in a random order. As such rPIE is analogous to minibatch gradient descent in machine learning. The potential benefits  include efficient memory use and a good speed boost by parallel computing resources. Unfortunately, rPIE
often fails to converge  in the current setting. 

To obtain reasonable results for rPIE, we make two adjustments. First, we reduce the phase range of RPP from 
$(-\pi,\pi]$ to $(-{\pi}/{2}, {\pi}/{2}$] which is an easier object to reconstruct. Second,  for rPIE we use PPC(0.025) for the probe initialization  which restricts the probe phase uncertainty to $(-0.025\pi, 0.025\pi]$ instead of $(-\pi/2,\pi/2]$. 

There are three adjustable parameters in rPIE and we select these values $\alpha =0.95, \gamma_{\rm prb}=0.95, \gamma_{\rm obj}=0.9$ (see \cite{rPIE17} for definition). The order of updating small patches is randomly shuffled in each experiment. For each test image, we run 20 independent experiments and present the best run in Fig. \ref{fig:rPIE}. For ease of comparison, Figure \ref{fig:rPIE}(b) shows the corresponding 
results by Gaussian-DRSAM with $\rho=1$.  

 \subsection{Poisson noise}
\begin{figure}[t]
\centering
\includegraphics[width=10cm]{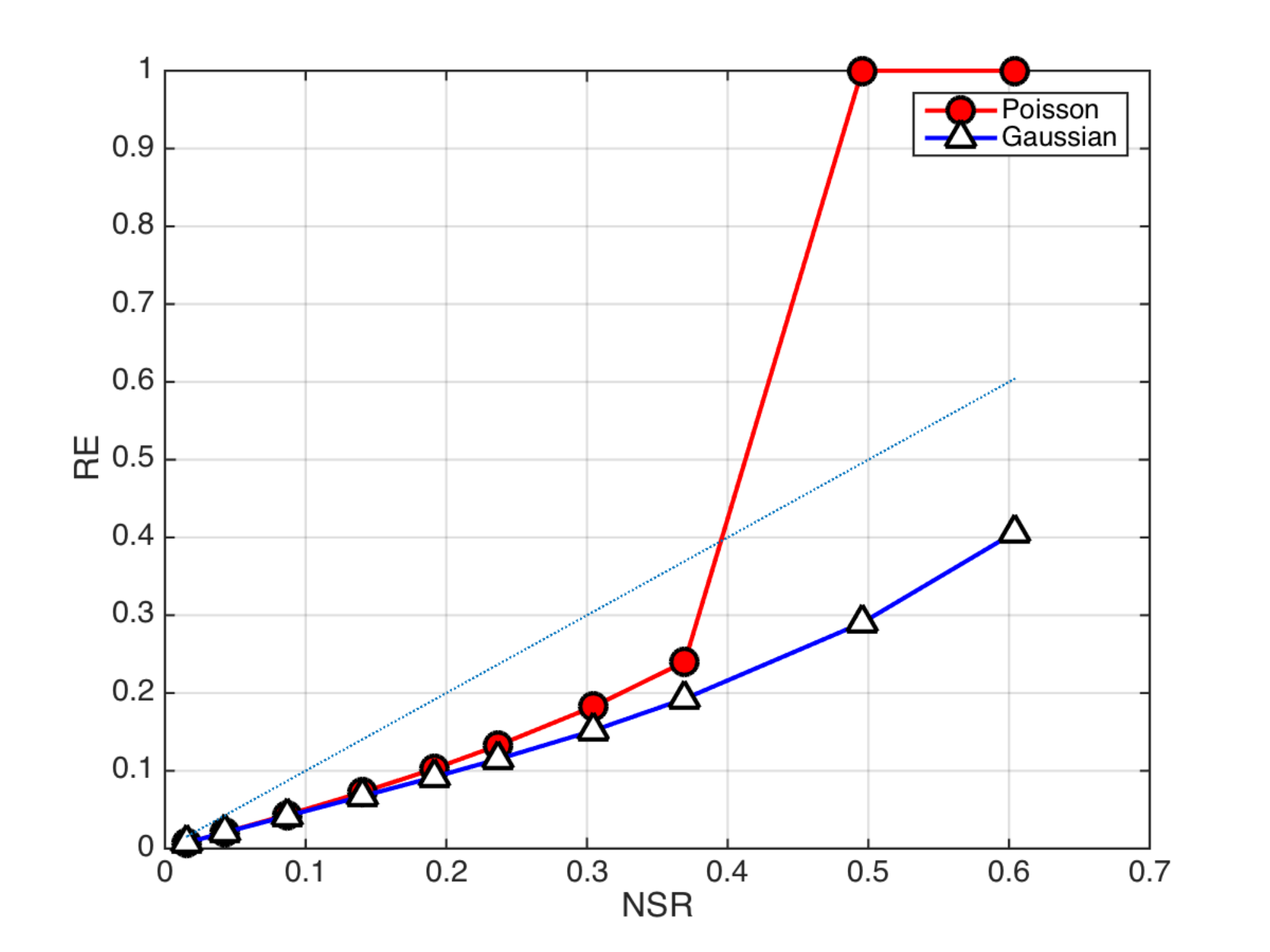}
\caption{RE versus NSR for reconstruction of CiB.}
\label{fig:noise}
\end{figure}
For noisy measurement, the level of noise is measured in terms of the noise-to-signal ratio (NSR).
\begin{equation*}
{\rm NSR}= \frac{\|b-|Af|\|}{\|Af\|}
\end{equation*}
where $A$ is the true measurement matrix and $f$ the true object. Because the noise dimension $N$ is roughly 16 times that of the object dimension,
the feasibility problem is  inconsistent with high probability.  

Figure \ref{fig:noise} shows RE versus  NSR for CiB  by Poisson-DRS and Gaussian-DRS with the periodic boundary condition, i.i.d. probe and the full-rank scheme. The maximum number of epoch in DRSAM is limited to $100$. The RR stabilizes  usually  after 30 epochs. The (blue) reference straight line has slope = 1. We see that the Gaussian-DRS outperforms the  Poisson-DRS, especially when  the Poisson RE becomes unstable for NSR $\ge 35\%$.  As noted in \cite{rPIE17,adaptive,AP-phasing} fast convergence (with the Poisson log-likelihood function) may introduce noisy artifacts and reduce reconstruction quality. 

Most important, Figure \ref{fig:noise} confirms that though provably non-convergent in the inconsistent case, Gaussian-DRSAM with $\rho=1$  can yield
reasonable solutions under practical termination rules.

\section{Conclusion and discussion}\label{sec:last}

We have presented and performed fixed point analysis for DRS methods of phase retrieval and ptychography based on
the proximal relaxation of AAR with the relaxation parameter $\rho$. 

For Gaussian-DRS, we have proved that
for $\rho\ge 1$ all   attracting fixed points must be regular solutions (Theorem \ref{thm:stable})
and that for $\rho\ge 0$ all regular solutions are   attracting (Theorem \ref{thm:stable2}). 
In other words, for $\rho\ge 1$, the problem of stagnation near a non-solutional  fixed point,  a common problem with AP, is precluded. 
On the other hand,
the problem of divergence (associated with AAR) in the inconsistent case does not arise in view of Theorem  \ref{thm:bounded}. 

In addition, we have given an explicit formula for the optimal parameter $\rho_*$ and the optimal rate of convergence in terms of
the spectral gap  (Corollary \ref{cor1}). 

When applied to standard phase retrieval with two coded diffraction patterns, Gaussian-DRS converges geometrically from random initialization.
When applied to blind ptychography, DRSAM,  even with a far from optimal step size,  converges geometrically under  the nearly minimum conditions established in the uniqueness theory \cite{blind-ptych}. 
Our Python codes are posted on {\tt https://github.com/AnotherdayBeaux/Blind$_{-}$Ptychography$_{-}$GUI}. 
 
The holy grail of optimization approach has been finding a globally convergent algorithm whose underlying attractors are fixed points.  It is worthwhile then to reflect on our results from the  global convergence perspective of \cite{LP}. 

We have already pointed out that the analysis in \cite{LP} is not applicable to non-differentiable loss functions. As discussed in Section \ref{sec:stepsize}, this   technical issue
has a profound effect on the convergence behavior in the inconsistent case: Gaussian-DRS with $\rho\ge 1$ does not converge, globally or locally. This is an unexpected consequence of Theorem \ref{thm:stable}. 

Our numerical experiments with noisy data, however, suggest that non-convergent DRS sequences are nevertheless well-behaved (probably due to hitherto unknown well-controlled attractors) and produce noise-amplification factor of about $\half$ when terminated. Analysis of such (possibly strange) attractors and their impacts on numerics  is an interesting topic for future research and at the frontier of numerical analysis. 

Moreover, the global convergence framework is typically  based on the construction of a non-increasing merit function along the iterated sequence (i.e. Lyapunov-like function)
that  requires the step size (reciprocal of $\rho$) to be sufficiently small, resulting in slow convergence in practice.  

Nice as it is,   perhaps algorithmic convergence should not be our fixation in the case of noisy data. It may be more useful, for numerical purposes, 
to solve noisy phase retrieval problem by algorithms with non-trivial (non-point-like) attractors which are necessarily non-convergent in the traditional sense.

\appendix
\section{Measurement matrices}\label{app:matrix}

Let $\IZ_n^2=\lb 0,n-1\rb^2$ be the object domain containing the support of the discrete object $f$ where $\lb k, l\rb$ denotes the integers between, and including,  $k\le l\in \IZ$.  Let $\cM^{0}:=\IZ_m^2, m<n,$ be the initial probe area, i.e.  the support of the probe $\mu^{0}$ describing the illumination field. 

Let $\cT$ be the set of all shifts, including $(0,0)$,  involved  in the ptychographic measurement. 
 Denote by $\mu^\bt$ the $\bt$-shifted probe for all $\bt\in \cT$ and $\cM^\bt$ the domain of
$\mu^\bt$. Let $f^\bt$ the object restricted to $\cM^\bt$.
We  refer to each $f^\bt$ as a part of $f$ and write $f=\vee_\bt f^\bt$ {  where $\vee$ is the ``union" of functions consistent over their common support set}. In ptychography, the original object is broken up into a set of overlapping object parts, each of which produces a $\mu^\bt$-coded diffraction pattern.  
The totality of the coded diffraction patterns is called the ptychographic measurement data.   For convenience, we assume the value zero for $\mu^\bt, f^\bt$ outside of $\cM^\bt$
 and the periodic boundary condition on $\IZ_n^2$ when $\mu^\bt$ crosses over the boundary of $\IZ_n^2$.

Let the $\mu$-Fourier transform of $f^{0}$ be written as 
\[
F^0(\bw)=\sum_{\bk\in \cM^0} e^{-\im 2\pi \bk\cdot\bw} \mu^{0}(\bk) f^{0}(\bk),\quad \bw=(w_1,w_2)\in [0,1]^2. 
\]
and the $\mu$-coded diffraction pattern as  \beq
|F^0(\bw)|^2= \sum_{\bk \in \widetilde\cM^0}\lt\{\sum_{\bk'\in \cM^0} \mu^{0}(\bk+\bk')f^{0}(\bk'+\bk)\overline{\mu^{0}(\bk')f^{0}(\bk')}\rt\}
   e^{-\im 2\pi \bk\cdot \bom}\label{auto}
   \eeq
   where
    \begin{equation*}
 \widetilde \cM^{0} = \{ (k_1,k_2)\in \IZ^2: -m+1\le k_1 \le m-1, -m+1\le k_2\leq m-1 \}.  
 \end{equation*}
 Here and below the over-line notation means
complex conjugacy. In view of \eqref{L}, we sample the coded diffraction pattern 
 on the grid 
\beq\label{L}
 L = \Big\{(w_1,w_2)\ | \ w_j = 0,\frac{1}{2 m-1},\frac{2}{2m-1},\ldots,\frac{2m-2}{2m-1}\Big\}. 
\eeq

We assume  {randomness} in the phases $\theta$ of the mask function 
$
\mu^0(\bn)=|\mu^0|(\bn)e^{\im \theta(\bn)}
$
where  $\theta(\bn)$ are independent, continuous real-valued random variables over $[0,2\pi)$. 
We also require that  $|\mu^0|(\bn)\neq 0,\forall \bn\in \cM^0$. 

 \begin{figure}[t]
\begin{center}
\subfigure[Matrix $A_\nu$]{
\includegraphics[width=8cm]{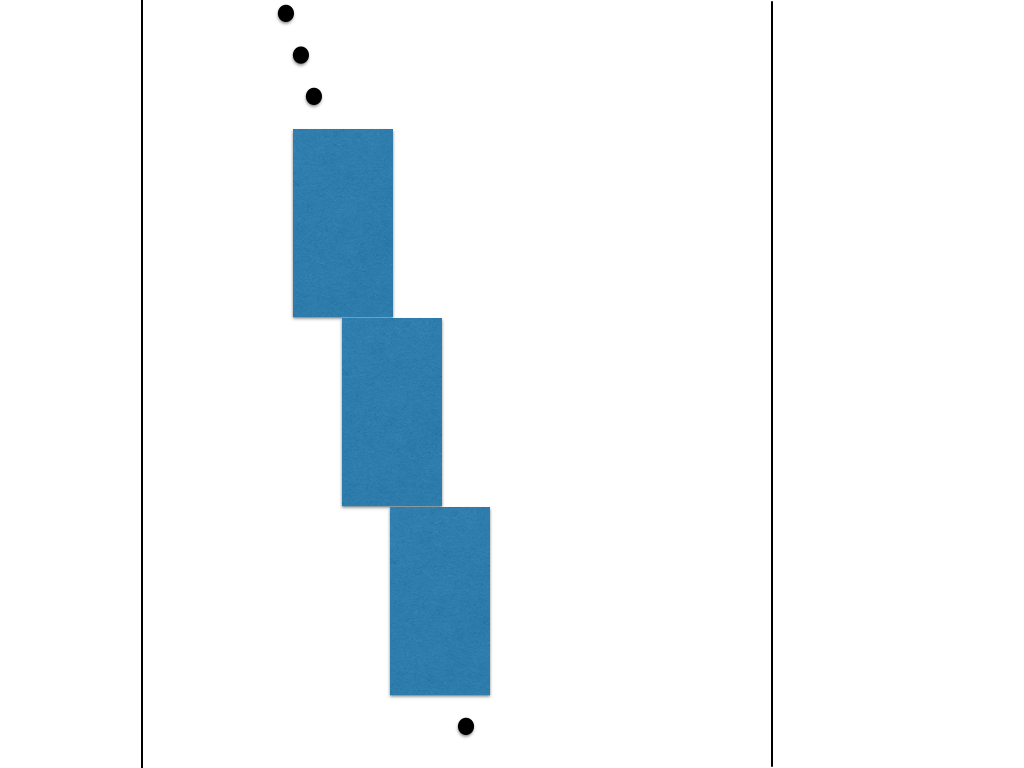}
}\hspace{-2cm}
\subfigure[Matrix $B_g$]{
\includegraphics[width=8cm]{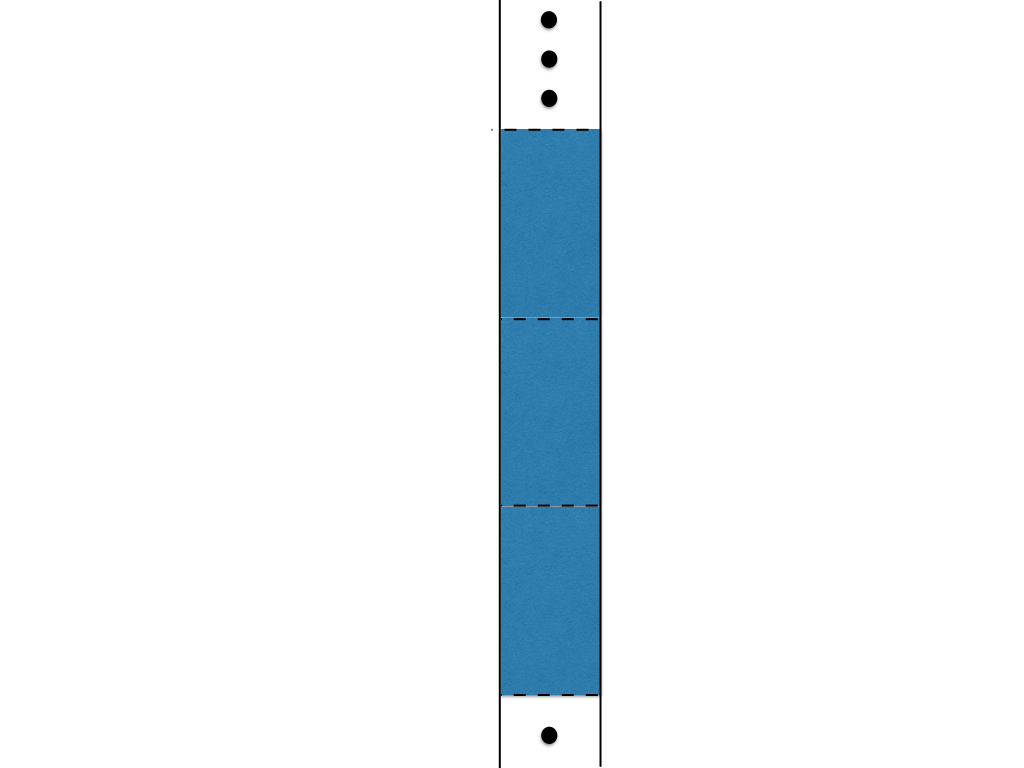}
}
\caption{(a) $A_\nu$ is a concatenation of shifted
blocks $\{\Phi\,\diag(\nu^\bt):\,\,\bt\in \cT\}$; (b) $B_g$  is a concatenation of unshifted 
blocks $\{\Phi\,\diag(g^\bt):\,\,\bt\in \cT\}$.   In both cases, each block gives rise to
a coded diffraction pattern $|\Phi (\nu^\bt\odot g^\bt)|$.
}
\label{fig10}
\end{center}
\end{figure}

Let $\cF(\nu^0,g)$ be the bilinear transformation representing the totality of the Fourier (magnitude and phase) data for any probe $\nu$ and object $g$.
From $\cF(\nu^0,g)$ we can define two measurement matrices. First, for a given $\nu^0\in \IC^{m^2}$, 
let $A_\nu$ be defined via the relation $A_v g:=\cF(\nu^0,g)$ for all $g\in \IC^{n^2}$; second, for a given $g\in \IC^{n^2}$,
let $B_g$ be defined via $B_g \nu=\cF(\nu^0,g)$ for all $\nu^0\in \IC^{m^2}$. 

More specifically, let $\Phi$ denote the $ L$-sampled Fourier matrix. The measurement matrix $A_\nu$ is a concatenation of $\{\Phi \,\diag(\nu^\bt):\bt \in \cT\}$ (Figure \eqref{fig10}(a)). Likewise,
$B_g$ is $\{\Phi \,\diag(g^\bt):\bt \in \cT\}$ stacked on top of each other (Figure \eqref{fig10}(b)). 
Since
$\Phi$ has orthogonal columns, both $A_\nu$ and $B_g$ have orthogonal columns and their pseudo-inverses are efficient to compute. 

We simplify the notation by setting $A=A_\mu$ and $B=B_f$.  

\section{The Poisson versus Gaussian log-likelihood functions}\label{app:likelihood}

Consider the Poisson distribution
\[
P(n)={\lambda^ne^{-\lambda}\over n!}
\]
Let $n=\lambda(1+\ep)$ where $\lamb\gg 1$ and $\ep\ll 1$.
Using  Stirling's formula 
\[
n!\sim \sqrt{2\pi n} e^{-n} n^n
\]
in the Poisson distribution, we obtain
\beqn
P(n)&\sim &{\lamb^{\lamb(1+\ep)}e^{-\lamb}\over \sqrt{2\pi} e^{-\lamb(1+\ep)} [\lamb(1+\ep)]^{\lamb(1+\ep)+1/2}}\\
&\sim &{1\over \sqrt{2\pi\lamb} e^{-\lamb\ep} (1+\ep)^{\lamb(1+\ep)+1/2}}.
\eeqn
By the asymptotic 
\[
(1+\ep)^{\lamb(1+\ep)+1/2}\sim e^{\lamb\ep+\lamb\ep^2/2}
\]
we have
\beq
\label{PG1}
P(n)\sim {e^{-\lamb\ep^2/2}\over \sqrt{2\pi \lamb}}={e^{-(n-\lambda)^2/(2\lamb)}\over \sqrt{2\pi\lamb}}. 
\eeq
Namely  in the low noise limit the Poisson noise is equivalent to the Gaussian noise of the mean $|Af|^2$ and
 the variance equal to the intensity of the diffraction pattern.  
\commentout{and get the pdf
\[
 {e^{-\xi^2/(2\sigma^2)}\over \sqrt{2\pi\sigma^2}}
\]
}
The overall SNR can be tuned by varying the  signal energy $\|Af\|^2$. 

The negative log-likelihood function for the right hand side of  \eqref{PG1} is 
\beq
\label{pg}
\sum_j \ln |u[j]| +{1\over 2} \lt|{b[j]\over |u[j]|}-|u[j]|\rt|^2,\quad b= \mbox{noisy diffraction pattern.}
\eeq
For small NSR and in the vicinity of $b$, we make the substitution 
\[
{\sqrt{b[j]}\over |u[j]|}\to 1,\quad \ln|u[j]|\to \ln\sqrt{b[j]}
\]
to obtain 
\beq
\label{pg2}
\mbox{const.}+ {1\over 2} \sum_j \lt|\sqrt{b[j]}-|u[j]|\rt|^2.
\eeq

\section{Equivalence between DRS and ADMM}\label{sec:noisy-admm}

We show that  ADMM applied to the augmented Lagrangian 
\beq
\label{AL2}
\cL(y,z)=K(y)+L(z)+\lamb^*(z-y)+{\rho\over 2}\|z-y\|^2
\eeq
in the order
 alternatively  as
\beq
\label{701'}z_{k+1}&=& \arg\min_z\cL(y_{k+1}, z, \lamb_k)\\
\label{700'}y_{k+1}&=&\arg\min_x\cL(y,z_k,\lamb_k)\\
\label{702'}\lamb_{k+1}&=&\lamb_k+\rho (z_{k+1}-y_{k+1}). 
\eeq
is equivalent to DRS. 

Let
\beq
\label{700"}z_{k+1}&=& \arg\min_z\cL(y_{k}, z, \lamb_k)=\prox_{L/\rho}(y_k-\lamb_k/\rho)\\
\label{701"}y_{k+1}&=&\arg\min_x\cL(y,z_{k+1},\lamb_k)=\prox_{K/\rho}(z_{k+1}+\lamb_k/\rho)
\eeq
and consider the new variable
\[
u_k:=z_k+\lamb_{k-1}/\rho.
\]
We have from \eqref{702'} that
\beqn
u_{k+1}=y_{k+1}+\lamb_{k+1}/\rho.
\eeqn
By \eqref{701"}, we also have
\[
y_{k+1}=P_X (z_{k+1}+\lamb_k/\rho)=P_X u_{k+1}
\]
and 
\beqn
y_k - \lamb_k/\rho= 2y_{k}-u_k=R_X u_{k}. 
\eeqn
So  \eqref{700"} becomes
\beq
z_{k+1}=\prox_{L/\rho}(R_X u_k)\label{710}.
\eeq
Note also that by \eqref{702'}
\beqn
u_k-P_X u_k=\lamb_k/\rho 
\eeqn
and hence
\beqn
u_{k+1}=z_{k+1}+\lamb_k/\rho=u_k-P_X u_k+\prox_{L/\rho}(R_X u_k)
\eeqn
which is exactly the DRS scheme \eqref{G1} after rearrangement.

\section{Perturbation analysis of Poisson-DRS}\label{app:Poisson}
The full analysis of the  Poisson-DRS \eqref{P1} is more challenging. Instead, we give a
perturbative derivation of  analogous result to Theorem \ref{thm:bounded} for the Poisson-DRS
with small positive $\rho$.

For small $\rho$, by keeping only the terms up to $\cO(\rho)$ we obtain
the perturbed DRS:
\beq
u_{k+1}=\half u_k-\half(1-{\rho\over 2}) R_X u_k +P_Y R_X u_k.
\eeq

Writing 
\[
I=P_X+P^\perp_X\quad\mbox{and}\quad R_X=P_X-P^\perp_X, 
\]
we then have the estimates
\beqn
 \|u_{k+1}\|
&\le& \|{\rho\over 4} P_X u_k+(1-{\rho\over 4})P^\perp_X u_k\|+\|P_Y R_X u_k\|\nn\\
&\le &(1-{\rho\over 4})\|u_k\|+\|b\| 
\eeqn
since $\rho$ is small. 
Iterating this bound, we obtain
\beqn
\|u_{k+1}\|\le (1-{\rho\over 4})^k\|u_1\|+\|b\|\sum_{j=0}^{k-1}(1-{\rho\over 4})^j
\eeqn
and hence
\beq
\label{881}
\limsup_{k\to\infty} \|u_k\|\le {4\over \rho} \|b\|. 
\eeq
Note that the small $\rho$ limit and the Poisson-to-Gaussian limit in Appendix \ref{app:likelihood}
do not commune, resulting in a different constant in \eqref{881} from Theorem \ref{thm:bounded}.

\section{Eigen-structure}\label{app:eigen}

The vector space $\IC^N=\IR^N\oplus_\IR i\IR^N$ is 
isomorphic to
$\IR^{2N}$ via the map 
\[ V(v):=\left[
\begin{array}{c}
\Re(v)     \\
 \Im(v)  
\end{array}
\right],\quad \forall v \in \IC^{N}\] 
and endowed with the real inner product
\[
\langle u, v\rangle :=\Re(u^*v)=V(u)^\top V(v),\quad u,v\in \IC^N.
\]
We have
\beq\label{53}
V(H\xi)= \lt[\begin{matrix}
\Re[H] \Re[\xi]+
\Im[H] \Im[\xi]\\
\Re[H] \Im[\xi]-
\Im[H] \Re[\xi]
           \end{matrix}\rt] 
=\lt[\begin{matrix}
\cH^\top V(\xi)\\
           \cH^\top V(-\im \xi)
           \end{matrix}\rt],\quad \xi\in \IC^n. 
           \eeq

 Let $\lambda_1\ge \lambda_2\ge \cdots\ge \lambda_{2n}\ge \lambda_{2n+1}=\cdots=\lambda_{N}=0$ be the singular values of $\cH$ in \eqref{Bv} with the corresponding right singular vectors
$\{\eta_k\in \IR^{N}\}_{k=1}^{N} $ and left singular vectors $\{{\xi}_k\in \IR^{2n}\}_{k=1}^{2n}$.
By definition, for  $k=1,\ldots,2n$, 
\beq
\label{53'}
H^* \eta_k&=&\lambda_k G^{-1} (\xi_k),\\
\Re[H G^{-1}( \xi_k)]&=& \lambda_k \eta_k. 
\eeq 
\begin{prop}\label{B*bound} 
We have $\xi_{1}=V(f)$,  $\xi_{2n^2}=V(-\im f)$, $\lambda_1=1, \lambda_{2n^2}=0$ as well
as 
  $\eta_{1}= |Af|$.
\end{prop}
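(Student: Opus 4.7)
The plan is to exploit the defining property of a regular solution: $u = Af$ satisfies $|u| = b$, so the rotation $\Om = \diag(\sgn(u))$ turns $u$ into the nonnegative real vector $b$. Concretely,
\[
Hf = \Om^* A f = \Om^* u = b,
\]
so $Hf$ is a real vector. Consequently $V(Hf) = (b,0)^\top \in \IR^{2N}$, i.e.\ the imaginary block vanishes.

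Next, I would unpack this using identity \eqref{53}: reading the top and bottom real blocks of $V(Hf)$ separately yields
\[
\cH^\top V(f) = b, \qquad \cH^\top V(-\im f) = 0.
\]
Since $H$ is an isometry we have $\|f\| = \|Hf\| = \|b\|$, and a direct computation shows $V(f)$ and $V(-\im f)$ are orthogonal in $\IR^{2n^2}$ with common norm $\|f\|$. The first relation therefore sends the unit vector $V(f)/\|f\|$ to the unit vector $b/\|b\|$, which forces $\lamb_1 \geq 1$.

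The key step is then to combine this lower bound with the spectral identity $\lamb_1^2 + \lamb_{2n^2}^2 = 1$ (Proposition \ref{Srate}) and the nonnegativity $\lamb_{2n^2} \geq 0$. Together these force $\lamb_1 = 1$ and $\lamb_{2n^2} = 0$. The singular vector identifications are then immediate: $V(f)/\|f\|$ is a unit right singular vector with singular value $1 = \lamb_1$, giving $\xi_1 = V(f)$ (up to normalization); $V(-\im f)/\|f\|$ lies in $\ker \cH^\top$ and is thus a right singular vector with singular value $0 = \lamb_{2n^2}$, giving $\xi_{2n^2} = V(-\im f)$. The paired left singular vector is recovered as $\eta_1 = \cH^\top \xi_1/\lamb_1 = b/\|b\| = |Af|$ (again up to normalization).

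The main obstacles I foresee are bookkeeping rather than substantive. First, the statement of the proposition omits explicit normalization, so the identifications $\xi_1 = V(f)$, $\xi_{2n^2} = V(-\im f)$, $\eta_1 = |Af|$ should be read as equalities up to positive scaling (or under the tacit assumption $\|f\| = 1$). Second, if $\cH^\top$ has nullity greater than one, there are many candidate null vectors, and singling out $V(-\im f)$ as $\xi_{2n^2}$ is a labeling convention; the content of the proposition is that this \emph{particular} vector is one such null direction, which is exactly what the identity $\cH^\top V(-\im f) = 0$ delivers.
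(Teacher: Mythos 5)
Your core computation is exactly the paper's: the paper's entire proof is the observation $Hf=\Om^*Af=|Af|$ (real and nonnegative for a regular solution), followed by reading off the two blocks of \eqref{53} to get $\cH^\top V(f)=|Af|$ and $\cH^\top V(-\im f)=0$, "hence the results." Your extra bookkeeping (the norm computation $\|\cH^\top V(f)\|=\|b\|=\|f\|=\|V(f)\|$, the orthogonality of $V(f)$ and $V(-\im f)$, and the remarks about normalization and the labeling of null directions) fills in what the paper leaves implicit, and is correct as far as it goes.

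There is, however, one step you should not take as stated: you close the argument by invoking the identity $\lamb_1^2+\lamb_{2n^2}^2=1$, which you attribute to Proposition \ref{Srate} but which is actually \eqref{57} of Proposition \ref{Buu} --- and the paper proves Proposition \ref{Buu} by induction with Proposition \ref{B*bound} as the base case ($k=1$). As the paper is organized, using that identity here is circular. Fortunately it is also unnecessary. For $\lamb_{2n^2}=0$ you need nothing beyond $\cH^\top V(-\im f)=0$ with $V(-\im f)\neq 0$: a nonzero kernel vector forces the smallest singular value of the $N\times 2n^2$ map to vanish, and $V(-\im f)$ may be taken as the corresponding singular vector. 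For $\lamb_1=1$ you need the upper bound $\lamb_1\le 1$ to pair with your lower bound $\lamb_1\ge 1$; this follows from the fact that \eqref{Bv'} is a real isometry, equivalently from the identity \eqref{Key}, $\|V(w)\|^2=\|\cH^\top V(w)\|^2+\|\cH^\top V(-\im w)\|^2$, which is proved directly from $H$ being an isometry and does not rest on Proposition \ref{B*bound}. With that substitution your argument is complete and matches the paper's intent.
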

\begin{proof} Since
 \beq\nn
&& H f=\Om^* A f=|Af|
\eeq
we have by  \eqref{53}
\beq
& &\Re[H f]=\cH^\top \xi_{1}=|Af|, \quad \Im[H f]=\cH^\top \xi_{2n^2}=0  \label{56}\eeq
 and hence the results. 
\end{proof}

\begin{cor}\label{cor5.5}
 \beq 
\label{63} \lambda_2&= &\max \{\|\Im(H u)\|: {u\in \IC^n, u\perp \im f,\|u\|=1} \}\\
& =& \max \{\|\cH^\top u\|: {u\in \IR^{2n^2},  u\perp \xi_1, \|u\|=1} \} \nn
 \eeq
\end{cor}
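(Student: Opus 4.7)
The proof naturally splits into verifying the two equalities in \eqref{63}.

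For the second equality, the plan is to invoke the classical variational (Courant--Fischer) characterization of singular values applied to the matrix $\cH^\top$. By Proposition \ref{B*bound}, $\xi_1$ (suitably normalized) is the top singular vector with singular value $\lambda_1 = 1$, and hence
\begin{equation*}
\lambda_2 = \max\{\|\cH^\top u\|: u \in \IR^{2n^2},\ u \perp \xi_1,\ \|u\| = 1\}.
\end{equation*}

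For the first equality, the plan is to exhibit an $\IR$-linear isometry $T: \IC^n \to \IR^{2n^2}$ which (i) intertwines $\|\Im(Hu)\|$ with $\|\cH^\top(Tu)\|$ and (ii) carries the constraint set $\{u : u \perp \im f,\ \|u\|=1\}$ bijectively onto $\{w : w \perp \xi_1,\ \|w\|=1\}$. Reading off the lower block of \eqref{53} immediately gives $\Im(Hu) = \cH^\top V(-\im u)$, so I would take $T(u) := V(-\im u)$; this is an $\IR$-linear isometry because $V$ is an isometry and multiplication by $-\im$ preserves the complex norm.

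For (ii), using the standard identity $\langle V(a), V(b)\rangle = \Re\langle a, b\rangle_{\IC}$, a direct computation gives
\begin{equation*}
\langle T u, \xi_1\rangle = \langle V(-\im u), V(f)\rangle = \Re\langle -\im u, f\rangle_{\IC} = -\Im\langle u, f\rangle_{\IC},
\end{equation*}
while the constraint $u \perp \im f$ in the real inner product on $\IC^n$ amounts to $\Re\langle u, \im f\rangle_{\IC} = -\Im\langle u, f\rangle_{\IC} = 0$. Hence the two orthogonality conditions coincide, and taking the supremum on each side yields the first equality.

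The only subtle point is the bookkeeping between complex and real inner products in step (ii); once this is verified, the rest is a routine consequence of \eqref{53} and Courant--Fischer. No substantive obstacle is anticipated.
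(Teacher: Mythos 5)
Your proof is correct and takes essentially the same route as the paper's: both rest on the identity $\Im(Hu)=\cH^\top V(-\im u)$ read off from \eqref{53}, the observation that the constraint $u\perp \im f$ translates into $V(-\im u)\perp \xi_1=V(f)$, and the variational (Courant--Fischer) characterization of $\lambda_2$ as the maximum of $\|\cH^\top\cdot\|$ over unit vectors orthogonal to the top right singular vector from Proposition \ref{B*bound}. Your write-up simply makes explicit the isometry $u\mapsto V(-\im u)$ and the real-versus-complex inner-product bookkeeping that the paper's terse proof leaves implicit.
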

\begin{proof}
By \eqref{53}, 
\[
\Im[H u]=\cH^\top V(-\im u).
\]
The orthogonality condition $\im u\perp f$ is equivalent to
\[
V(x_0)\perp V(-\im u).
\]
Hence, by 
 Proposition \ref{B*bound} $\xi_2 $ is the maximizer of the right hand side of \eqref{63}, yielding the desired value $\lambda_2$. 

\end{proof}

 \begin{prop}\label{Buu}   For $k=1,\ldots, 2n^2$, 
  \beq\label{57}
  \lambda_k^2+\lambda_{2n^2+1-k}^2=1
  \eeq
\beq
\label{58}
\xi_{2n^2+1-k}&=&V( -\im V^{-1}(\xi_k) )\\
\xi_{k}&=&V(\im V^{-1}(\xi_{2n^2+1-k}) ).\label{59}
\eeq
\end{prop}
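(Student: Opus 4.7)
The plan is to exploit the complex isometry of $H$ (so $H^*H = I_{n^2}$) together with a symplectic rotation of $\cH$ to pair up the singular values. Write $P = \Re H$ and $Q = \Im H$, so $\cH = [P,Q]$. Expanding $H^*H = I_{n^2}$ into real and imaginary parts yields
\begin{equation*}
P^\top P + Q^\top Q = I_{n^2}, \qquad P^\top Q = Q^\top P .
\end{equation*}
Next, introduce the rotated matrix $\cH' := [-Q,\, P] \in \IR^{N\times 2n^2}$ together with the orthogonal block matrix
\begin{equation*}
J = \begin{pmatrix} 0 & I_{n^2} \\ -I_{n^2} & 0 \end{pmatrix}, \qquad J^\top = -J,\quad J^2 = -I_{2n^2}.
\end{equation*}
A direct block multiplication gives $\cH' = \cH J$, and the two identities above deliver the two key algebraic facts
\begin{equation*}
\cH\cH^\top \;=\; PP^\top + QQ^\top \;=\; \cH'(\cH')^\top, \qquad \cH^\top\cH + (\cH')^\top\cH' \;=\; I_{2n^2}.
\end{equation*}

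From these two facts I extract both \eqref{57} and the right-singular-vector relation. The first identity shows $\cH$ and $\cH'$ share their singular values. The second, written as $J^\top\cH^\top\cH J = I - \cH^\top\cH$, shows that $\cH^\top\cH$ and $(\cH')^\top\cH'$ commute, so they share an orthonormal eigenbasis with paired eigenvalues summing to $1$. Equivalently, if $\cH^\top\cH\,\xi = \lambda^2\xi$ then $\cH^\top\cH (J\xi) = (1-\lambda^2)J\xi$, so the orthogonal map $\xi \mapsto J\xi$ is an involutive (up to sign) bijection exchanging the $\lambda^2$-eigenspace of $\cH^\top\cH$ with its $(1-\lambda^2)$-eigenspace. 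With the singular values ordered as $\lambda_1\ge \cdots \ge \lambda_{2n^2}$, this pairing forces $\lambda_k^2 + \lambda_{2n^2+1-k}^2 = 1$, which is \eqref{57}. Inside each paired eigenspace I then choose the right singular vectors so that $\xi_{2n^2+1-k} = J\xi_k$; this is consistent with the base case $\xi_1 = V(f),\,\xi_{2n^2} = V(-\im f)$ of Proposition~\ref{B*bound}, since a direct computation shows $JV(f) = V(-\im f)$.

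To translate $\xi_{2n^2+1-k} = J\xi_k$ into the stated coordinate formulas, I observe that for any $u\in \IC^{n^2}$
\begin{equation*}
JV(u) = \begin{pmatrix} 0 & I \\ -I & 0 \end{pmatrix}\begin{pmatrix} \Re u \\ \Im u \end{pmatrix} = \begin{pmatrix} \Im u \\ -\Re u \end{pmatrix} = V(-\im u),
\end{equation*}
and similarly $J^\top V(u) = V(\im u)$. Setting $u = V^{-1}(\xi_k)$ gives \eqref{58}, and $J^\top\xi_{2n^2+1-k} = J^\top J\xi_k = \xi_k$ gives \eqref{59}. The main—but mild—technical point is the choice of basis inside an eigenspace of multiplicity greater than one, most notably when $\lambda^2 = 1/2$; there $J$ restricts to a complex structure (orthogonal with $J^2 = -I$) on the eigenspace, so the space splits into $J$-invariant two-dimensional real subspaces on each of which a compatible basis $\{\xi_k, J\xi_k\}$ can be chosen.
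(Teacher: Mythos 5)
Your proof is correct, and it takes a genuinely different route from the paper's. The paper works from the scalar isometry identity $\|V(w)\|^2=\|\cH^\top V(w)\|^2+\|\cH^\top V(-\im w)\|^2$ (which is exactly the diagonal of your matrix identity $\cH^\top\cH+J^\top\cH^\top\cH J=I_{2n^2}$, since $JV(w)=V(-\im w)$) and then runs an induction on $k$ using the max/min variational characterization of singular values, pairing $\lamb_j$ with $\lamb_{2n^2+1-j}$ one step at a time. You instead polarize the isometry into the full operator identity, read off at once that conjugation by the orthogonal complex structure $J$ carries the $\lamb^2$-eigenspace of $\cH^\top\cH$ onto the $(1-\lamb^2)$-eigenspace, and deduce the pairing of the ordered spectrum and the vector relations \eqref{58}--\eqref{59} simultaneously, with no induction. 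What your approach buys is a cleaner global statement ($J$ intertwines the two eigenspaces) and an honest treatment of degeneracies: both proofs are really asserting that the singular vectors \emph{can be chosen} to satisfy \eqref{58}--\eqref{59}, and your remark on the $\lamb^2=1/2$ eigenspace, where $J$ acts as a complex structure and one must pick a $J$-compatible basis, addresses a point the paper's induction (whose argmax is not unique at repeated singular values) silently glosses over. What the paper's variational argument buys is that it plugs directly into the characterization of $\lamb_2$ used in Corollary~\ref{cor5.5} and needs only the single norm identity \eqref{Key} rather than the two block identities $P^\top P+Q^\top Q=I$ and $P^\top Q=Q^\top P$. (Your auxiliary observation $\cH\cH^\top=\cH'(\cH')^\top$ is true but not actually needed, since $\cH'=\cH J$ with $J$ orthogonal already shares singular values with $\cH$; the second identity is the one doing all the work.)
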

\begin{proof}
Since $H$ is an isometry, we have $\|w\|=\|H w\|,\forall w\in \IC^n$. On the other hand, 
we have
\beqn
\|Hw\|^2=\|V(Hw)\|^2=\|\cH^\top V(w)\|^2+\|\cH^\top V(-\im w)\|^2\ldots
\eeqn
and hence
\beq
\|V(w)\|^2=\|\cH^\top V(w)\|^2+\|\cH^\top V(-\im w)\|^2. \label{Key}
\eeq

Now we prove \eqref{57}, \eqref{58} and \eqref{59} by induction. 

Recall the variational characterization of the singular values/vectors
\beq \label{60'}
\lambda_j=\max\| \cH^\top {u}\|,& \xi_j=\hbox{\rm arg}\max
 \| \cH^\top  {u}\|, & \hbox{s.t.}\,\, {u}\perp {\xi}_1,\ldots,  {\xi}_{j-1},\quad \|u\|=1
 \eeq
By Proposition \ref{B*bound}, \eqref{57}, \eqref{58} and \eqref{59} hold for $k=1$. 
Suppose  \eqref{57}, \eqref{58} and \eqref{59} hold for $k=1,\ldots,j-1$ and 
we now show that they also hold for $k=j$. 

 Hence by \eqref{Key}
 \[
 \lambda^2_j=\max_{\|u\|=1}\| \cH^\top {u}\|^2=1-\min_{\|v\|=1}\| \cH^\top {v}\|^2,\quad  \hbox{s.t.}\,\, {u}\perp {\xi}_1,\ldots,  {\xi}_{j-1},\quad v=V(-\im V^{-1}(u)).
 \]
 The condition $ {u}\perp {\xi}_1,\ldots,  {\xi}_{j-1}$ implies $v\perp {\xi}_{2n^2},\ldots,  {\xi}_{2n^2+2-j}$ and vice versa. By the dual variational characterization to \eqref{60'}
 \beqn
 \lambda_{2n^2+1-j}=\min\| \cH^\top {u}\|,& \xi_{2n^2+1-j}=\hbox{\rm arg}\min
 \| \cH^\top  {u}\|, & \hbox{s.t.}\,\, {u}\perp {\xi}_{2n^2},\ldots,  {\xi}_{2n^2+2-j}, \|u\|=1,
\eeqn 
 we have  
 \[
 \lambda_j^2=1-\lambda_{2n^2+1-j}^2,\quad 
\xi_{2n^2+1-j}=V(-\im V^{-1}(\xi_j)).
\] 

\end{proof}

\begin{prop}\label{Srate}
For each $k=1,\ldots, 2n^2$, 
\begin{eqnarray}\label{B5}
&&HH^*\eta_k=\lambda_k(\lambda_k \eta_k+\im\lambda_{2n^2+1-k} \eta_{2n^2+1-k}),\\
&&HH^*\eta_{2n^2+1-k}=\lambda_{2n^2+1-k}(\lambda_{2n^2+1-k} \eta_{2n^2+1-k}-\im\lambda_k \eta_k)\label{B51}
\end{eqnarray}
implying 
\beqn
HH^*&=&\lt[ \begin{matrix}
\lambda_k^2& \lambda_k\lambda_{2n^2+1-k}\\
\lambda_k\lambda_{2n^2+1-k}&\lambda_{2n^2+1-k}^2
\end{matrix}\rt]
\eeqn
in  the basis of $\eta_k, \im \eta_{2n^2+1-k}$.
\end{prop}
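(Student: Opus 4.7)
My plan is to exploit the pairing relations from Proposition \ref{Buu} together with the SVD identities in \eqref{53'} to convert the real SVD of $\cH$ into a complex block structure for $HH^*$.

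First I would introduce the shorthand $w_k := V^{-1}(\xi_k)$, so that \eqref{53'} becomes the two identities $H^*\eta_k = \lambda_k w_k$ and $\Re[Hw_k] = \lambda_k\eta_k$. The core step is then to compute $\Im[Hw_k]$ in terms of $\eta_{2n^2+1-k}$. To do this, I would apply \eqref{58} in the form $w_{2n^2+1-k} = -\im w_k$, so that
\begin{equation*}
\lambda_{2n^2+1-k}\,\eta_{2n^2+1-k} \;=\; \Re[H w_{2n^2+1-k}] \;=\; \Re[-\im H w_k] \;=\; \Im[H w_k].
\end{equation*}
Assembling real and imaginary parts gives the key formula
\begin{equation*}
H w_k \;=\; \lambda_k \eta_k + \im\,\lambda_{2n^2+1-k}\,\eta_{2n^2+1-k}.
\end{equation*}

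From here \eqref{B5} is immediate: $HH^*\eta_k = \lambda_k H w_k$, which gives the stated expression. For \eqref{B51} I would use the same relations with $k$ replaced by $2n^2+1-k$, so that $H^*\eta_{2n^2+1-k} = \lambda_{2n^2+1-k} w_{2n^2+1-k} = -\im\lambda_{2n^2+1-k} w_k$, and therefore
\begin{equation*}
HH^*\eta_{2n^2+1-k} \;=\; -\im\,\lambda_{2n^2+1-k}\, H w_k \;=\; \lambda_{2n^2+1-k}\bigl(\lambda_{2n^2+1-k}\eta_{2n^2+1-k} - \im\lambda_k\eta_k\bigr),
\end{equation*}
as required. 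Finally, to read off the $2\times 2$ block in the basis $\{\eta_k, \im\eta_{2n^2+1-k}\}$, I would rewrite \eqref{B5} as a combination of these two basis vectors (the coefficient of $\im\eta_{2n^2+1-k}$ is $\lambda_k\lambda_{2n^2+1-k}$) and apply $HH^*$ to $\im\eta_{2n^2+1-k}$ by $\IC$-linearity, multiplying \eqref{B51} by $\im$; collecting coefficients produces the symmetric block displayed in the statement.

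I do not anticipate a genuine obstacle here: everything reduces to tracking the complex structure through the real SVD of $\cH$, and the two ingredients needed (the identification $w_{2n^2+1-k}=-\im w_k$ and the pair of SVD identities) are already in hand. The only point requiring mild care is that $HH^*$ is $\IC$-linear on $\IC^N$, so when applying it to $\im\eta_{2n^2+1-k}$ the scalar $\im$ passes through, which is what makes the $2\times 2$ block symmetric rather than skew.
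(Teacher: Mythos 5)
Your proof is correct and follows essentially the same route as the paper: both arguments rest on the SVD identities $H^*\eta_k=\lambda_k V^{-1}(\xi_k)$, $\Re[HV^{-1}(\xi_k)]=\lambda_k\eta_k$ and the pairing $\xi_{2n^2+1-k}=V(-\im V^{-1}(\xi_k))$ from Proposition \ref{Buu} to identify $\Im[HV^{-1}(\xi_k)]$ with $\lambda_{2n^2+1-k}\eta_{2n^2+1-k}$. The only cosmetic difference is that the paper expands $H=\Re[H]+\im\Im[H]$ against the real and imaginary blocks of $\xi_k$ and repeats the computation for \eqref{B51}, whereas you package everything into $w_k$ and deduce \eqref{B51} directly from $H^*\eta_{2n^2+1-k}=-\im\lambda_{2n^2+1-k}w_k$, which is slightly more economical but not a different argument.
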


\begin{proof} By definition, $\cH\eta_k= \lambda_k {\xi}_k.$ Hence 
\[H^* \eta_k=(\Re[H^*]+\im\Im[H^*]) \eta_k=\lambda_k (\xi_k^{\rm R}+\im\xi_k^{\rm I})
\]
where 
\[
\xi_k=\lt[\begin{matrix} \xi_k^{\rm R}\\
\xi_k^{\rm I}
\end{matrix}
\rt],\quad \xi_k^{\rm R},\xi_k^{\rm I}\in \IR^n.
\]

On the other hand, $\cH^\top \xi_k=\lambda_k \eta_k$ and hence 
\beq
\label{59'}
\Re[H]\xi_k^{\rm R}-\Im[H] \xi_k^{\rm I}=\lambda_k \eta_k. 
\eeq

Now we compute $HH^* \eta_k$ as follows.
\beq
HH^* \eta_k&=& \lambda_k H(\xi_k^{\rm R}+\im\xi_k^{\rm I})\label{60}\\
&=& \lambda_k (\Re[H]+\im\Im[H])(\xi_k^{\rm R}+\im\xi_k^{\rm I})\nn\\
&=&\lambda_k( \Re[H]\xi_k^{\rm R}-\Im[H]\xi_k^{\rm I})+\im\lambda_k (\Re[H]\xi_k^{\rm I}+\Im[H] \xi_k^{\rm R})\nn\\
&=&\lambda_k^2\eta_k+\im\lambda_k (\Re[H]\xi_k^{\rm I}+\Im[H] \xi_k^{\rm R})\nn
\eeq
by \eqref{59'}.

Notice that 
\beq
 \label{61}\Re(H)\xi_k^{\rm I}+\Im(H) \xi_k^{\rm R}&=&\cH^\top \lt[
\begin{matrix}
\Re(-\im V^{-1}(\xi_k))\\
\Im(-\im V^{-1}(\xi_k))\end{matrix}\rt]\\
&=&\cH^\top V(-\im V^{-1}(\xi_k))\nn\\
&=&\cH^\top \xi_{2n^2+1-k}\nn\\
&=&\lambda_{2n^2+1-k} \eta_{2n^2+1-k}\nn
\eeq
by Proposition \ref{Buu}. 

Putting \eqref{60} and \eqref{61} together, we have
\eqref{B5}. Likewise, \eqref{B51} follows from a similar calculation. 
\end{proof}

\section*{Acknowledgment} This research is supported by  the US National Science Foundation  grant DMS-1413373 and
SIMONS FDN 2019-24.  A.F. thanks National Center for
Theoretical Sciences (NCTS), Taiwan,   where  the present work was carried out, for the hospitality  during his visits  in June and August 2018.   

\bibliographystyle{siamplain}
\bibliography{references}

\begin{thebibliography}{plain}

\bibitem{BB}
{\sc H. H. Bauschke \& J. M. Borwein}, {\em On projection algorithms for solving convex feasibility problems}, { SIAM Rev.} {38 } (1996) 367-426. 



\bibitem{DR-rate1}
{\sc H.H. Bauschkea, J.Y. B. Cruz, T.T.A. Nghia, 
H.M. Phan \& X. Wang}, {\em The rate of linear convergence of the Douglas-Rachford algorithm for subspaces is the cosine of the
Friedrichs angle}, {  J. Approx. Theory} {  185} (2014) 63-79. 


\bibitem{BCL02} {\sc H.H. Bauschke, P.L. Combettes and D. R. Luke}, {\em Phase retrieval, error reduction algorithm, and Fienup
variants: a view from convex optimization}, {  J. Opt. Soc. Am. A} {  19} (2002) 13341-1345.

\bibitem{HPR}
{\sc H.H. Bauschke, P.L. Combettes and D. R. Luke}, {\em Hybrid projection-reflection method for phase retrieval}, {   J. Opt. Soc. Am. A } {  20} (2003) 1025-1034.


\bibitem{BCL04}
{\sc H. H. Bauschke, P. L. Combettes and D. R. Luke}, {\em Finding best approximation pairs relative to two closed convex sets in Hilbert space}, {   J. Approx. Theory} {  127} (2004) 178-192. 


\bibitem{Poisson2}
{\sc L. Bian, J. Suo, J. Chung, X. Ou, C. Yang, F. Chen and Q. Dai}, {\em Fourier ptychographic reconstruction using Poisson
maximum likelihood and truncated Wirtinger gradient}, {  Sci. Rep. } {  6} (2016), 27384.


\bibitem{overlap}%
{\sc O. Bunk, M. Dierolf, S. Kynde, I. Johnson, O. Marti \& F. Pfeiffer}, {\em Influence of the overlap parameter on the convergence of the ptychographical iterative engine}, {   Ultramicroscopy} {  108} (2008) 481-487.

\bibitem{March19}
{\sc H. Chang, P. Enfedaque and S. Marchesini},
 {\em Blind ptychographic phase retrieval via convergent alternating direction method of multipliers},
{  SIAM J.  Imaging Sci.} {  12} (2019) 153-185.

\bibitem{FDR}%
{\sc P. Chen and  A. Fannjiang},  {\em Phase retrieval with a single mask 
by Douglas-Rachford algorithms}, {  Appl. Comput. Harmon. Anal.} {   44} (2018), 665-699. 

\bibitem{ptych-unique}
{\sc P. Chen and A. Fannjiang},  {\em  Coded-aperture ptychography: uniqueness and reconstruction}, {  Inverse Problems} {  34 }(2018) 025003.



\bibitem{AP-phasing}%
{\sc P. Chen, A. Fannjiang and G. Liu},  {\em Phase retrieval with one or two coded diffraction patterns by alternating projection with the null initialization}, {  J. Fourier Anal. Appl.} {  24} (2018), 719-758. 


\bibitem{ptycho10}%
{\sc M. Dierolf, A. Menzel, P. Thibault, P. Schneider, C. M. Kewish, R. Wepf, O. Bunk, and F. Pfeiffer},  {\em  Ptychographic x-ray computed tomography at the nanoscale}, {  Nature} {  467} (2010), 436-439.



\bibitem{DR}
{\sc J. Douglas and H.H. Rachford}, {\em On the numerical solution of heat conduction problems in two and three space variables}, {  Trans.  Am.  Math.  Soc.}{  82} (1956), 421-439.



\bibitem{EB92}
{\sc J.	Eckstein	and D.P. Bertsekas},  {\em On the Douglas-Rachford splitting method and the proximal point algorithm for maximal monotone operators},  {  Math. Program. A} {  55} (1992), 293-318.

\bibitem{Elser}
{\sc V. Elser}, 
 {\em Phase retrieval by iterated projections}, 
{  J. Opt. Soc. Am. A}  {  20} (2003), 40-55.


\bibitem{unique} %
{\sc A. Fannjiang}, {\em Absolute uniqueness of phase retrieval with random illumination}, {  Inverse Problems} {  28} (2012), 075008. 


\bibitem{raster}

{\sc A. Fannjiang},  {\em Raster grid pathology and the cure}, {  Multiscale Model.  Simul.} {  17} (2019), 973-995.


\bibitem{blind-ptych}

{\sc A. Fannjiang \& P. Chen},  {\em Blind ptychography: uniqueness and ambiguities},  {  Inverse Problems} to appear. 




\bibitem{pum}%
{\sc A. Fannjiang and W. Liao},  {\em Fourier phasing with phase-uncertain mask}, {  Inverse Problems} 
{  29} (2013) 125001.

\bibitem{DRS}
{\sc A. Fannjiang and Z. Zhang},  {\em Blind ptychography by Douglas-Rachford splitting}, {arxiv:1809.00962}.

\bibitem{PIE104}%
{\sc H.M.L. Faulkner and J.M. Rodenburg},  {\em Movable aperture lensless transmission microscopy: A novel phase retrieval algorithm}, {  Phys. Rev. Lett.}{  93} (2004), 023903. 

\bibitem{PIE05}%
{\sc H.M.L. Faulkner and J.M. Rodenburg},
 {\em Error tolerance of an iterative phase retrieval algorithm for moveable illumination microscopy},
{  Ultramicroscopy} {  103:2} (2005), 153-164. 

\bibitem{Fie82}
{\sc J.R. Fienup},  {\em Phase retrieval algorithms|a comparison}, {  Appl. Opt. } 21, 2758-2769 (1982). 


\bibitem{Fie86}
{\sc J.R. Fienup, C.C. Wackerman,}  {\em Phase-retrieval stagnation problems and solutions}, {  J. Opt. Soc. Am. A } {  3} (1986) 1897-1907.

\bibitem{GM76}
{\sc D. Gabay and B.  Mercier}, {\em A dual algorithm for the solution of nonlinear variational problems via finite element approximation},  {  Computers \& Mathematics with Applications} {  2} (1976), 17-40.


\bibitem{Boyd17}
{\sc P. Giselsson and S. Boyd},  {\em  Linear convergence and metric selection for Douglas-Rachford Splitting and ADMM},
{  IEEE Trans. Auto. Control} {  62:2}(2017) 532-544. 

\bibitem{GM75}
{\sc R. Glowinski and A. Marroco},  {\em Sur l'approximation, par \'el\'ements finis d'ordre un, et la r\'esolution, par p\'enalisation-dualit\'e d'une classe de probl\'emes de dirichlet non lin\'eaires}, 
{  ESAIM: Mathematical Modelling and Numerical Analysis}, {  9}(1975), 41-76.

\bibitem{noise}
{\sc P. Godard, M. Allain, V. Chamard, and J. Rodenburg},  {\em Noise models for low counting rate coherent diffraction imaging}, {   Opt. Express} {  20} (2012), 25914-25934.



\bibitem{DR-rate2}
{\sc B. He and X. Yuan},  {\em On the $\cO(1/m)$ convergence rate of the Douglas-Rachford alternating direction method}, {  SIAM J. Numer. Anal.} {  50} (2012) 700-709. 

\bibitem{Hesse}%
{\sc R. Hesse, D. R. Luke, S. Sabach, and M.K. Tam},
 {\em Proximal heterogeneous block implicit-explicit method and application to blind ptychographic diffraction imaging}, {  SIAM J. Imag. Sci.} {  8} (2015) pp. 426-457. 




\bibitem{noise3} {\sc A.P. Konijnenberg, W.M.J. Coene and H.P. Urbach},
 {\em Model-independent noise-robust extension of ptychography},
{  Opt. Exp.}{  26} (2018) 5857-5874.


\bibitem{FPM15}
{\sc C. Kuang, Y. Ma, R. Zhou, J. Lee, G. Barbastathis, R. R. Dasari, Z. Yaqoob \& P.T.C. So},  {\em Digital micromirror device-based laser-illumination Fourier ptychographic microscopy}, {  Opt. Exp. }{  23}(2015), 26999-27010. 

\bibitem{LP} {\sc G. Li \& T. K. Pong},
 {\em Douglas-Rachford splitting for nonconvex optimization with application to nonconvex feasibility problems},
{  Math. Program.} {  A 159} (2016), 371-401

\bibitem{raar17}
{\sc J. Li and T. Zhou},  {\em On relaxed averaged alternating reflections (RAAR) algorithm for phase retrieval with structured illumination}, {  Inverse Problems} {  33} (2017) 025012.


\bibitem{LM79}
{\sc P.-L. Lions and B. Mercier}, {\em Splitting algorithms for the sum of two nonlinear operators}, {  SIAM  J. Num.  Anal.} {  16} (1979), 964-979.



\bibitem{Luke}
{\sc D. Luke}, {\em Relaxed averaged alternating reflections for diffraction imaging}, {  Inverse Probl.} {  21} (2005) 37-50.

\bibitem{Luke2}
{\sc D. Luke},  {\em Finding best approximation pairs relative to a convex and prox-regular set in a Hilbert space}, {  SIAM J. Optim.} {  19} (2008) 714-739. 



\bibitem{rPIE17}
{\sc A. M. Maiden, D. Johnson and P. Li}, 
 {\em Further improvements to the ptychographical iterative engine}, {  Optica}
{  4} (2017), 736-745.

\bibitem{ptycho-rpi}%
{\sc A.M. Maiden,	 G.R. Morrison,	 B. Kaulich,	 A. Gianoncelli	 \& J.M. Rodenburg},
 {\em Soft X-ray spectromicroscopy using ptychography with randomly phased illumination},
{  Nat. Commun.} {  4} (2013), 1669. 


\bibitem{ePIE09}%
{\sc A.M. Maiden \& J.M. Rodenburg},  {\em An improved ptychographical phase retrieval algorithm for diffractive imaging}, {  Ultramicroscopy} {  109} (2009), 1256-1262.



\bibitem{March16}
{\sc S. Marchesini, H. Krishnan, B. J. Daurer, D. A. Shapiro, T. Perciano, J. A. Sethian, and F. R. Maia},  {\em SHARP: a distributed GPU-based ptychographic solver}, {  J. Appl. Crystallogr. } {  49} (2016), 1245-1252.


\bibitem{zone-plate}
{\sc G.R. Morrison, F. Zhang, A. Gianoncelli and I.K. Robinson},
 {\em X-ray ptychography using randomized zone plates}, {  Opt. Exp.} {  26} (2018) 14915-14927.

\bibitem{parallel}
{\sc Y. S. G. Nashed, D. J. Vine, T. Peterka, J. Deng,
R. Ross  and C. Jacobsen}, 
 {\em Parallel ptychographic reconstruction}, {  Opt. Express} {  22} (2014) 32082-32097. 



\bibitem{Nugent}
{\sc K.A. Nugent},  {\em Coherent methods in the X-ray sciences}, {  Adv. Phys.}{  59} (2010) 1-99. 

\bibitem{Yang14}
{\sc X. Ou, G. Zheng and C. Yang},  {\em Embedded pupil function recovery for Fourier ptychographic microscopy},
{  Opt. Exp.} {  22} (2014) 4960-4972. 



\bibitem{random-aperture}
{\sc X. Peng, G.J. Ruane, M.B. Quadrelli \& G.A. Swartzlander},  {\em  Randomized apertures: high resolution imaging in far field}, {  Opt. Express} {  25} (2017) 296187. 

\bibitem{Pfeiffer}
{\sc F. Pfeiffer},  {\em X-ray ptychography}, {  Nat. Photon.} {  12} (2017) 9-17. 




\bibitem{PIE204}%
{\sc J.M.  Rodenburg and H.M.L.  Faulkner},  {\em A phase retrieval algorithm for shifting illumination}, {  Appl. Phys.  Lett.} {  85} (2004), 4795.








\bibitem{diffuser}
{\sc M. Stockmar, P. Cloetens, I. Zanette, B. Enders, M. Dierolf, F. Pfeiffer, and P. Thibault}, {\em Near-field ptychography: phase retrieval for inline holography using a structured illumination}, {  Sci. Rep.} {  3} (2013), 1927.



\bibitem{probe09}%
{\sc P. Thibault, M. Dierolf, O. Bunk, A. Menzel \& F. Pfeiffer}, 
 {\em Probe retrieval in ptychographic coherent diffractive imaging},
{  Ultramicroscopy}
{  109} (2009),  338-343.

\bibitem{DM08}%
{\sc P. Thibault, M. Dierolf, A. Menzel, O. Bunk, C. David and F. Pfeiffer},  {\em High-resolution scanning X-ray diffraction microscopy}, {  Science} {  321} (2008),  379-382.

\bibitem{ML12}
{\sc P. Thibault and M. Guizar-Sicairos},  {\em  Maximum-likelihood refinement for coherent diffractive imaging}, {  New J. Phys.} {  14} (2012), 063004.

\bibitem{Tian15}
{\sc L. Tian, Z. Liu, L-H Yeh, M. Chen, J. Zhong \& L. Waller},  {\em Computational illumination for high-speed in vitro Fourier ptychographic microscopy}, {  Optica} {  2} (2015) 904-911. 


 
 \bibitem{ADM}
{\sc Z. Wen, C. Yang, X. Liu and S. Marchesini},   {\em Alternating direction methods for classical and ptychographic phase retrieval}, {   Inverse Problems} {  28} (2012),  115010.


\bibitem{Waller15}
{\sc L. Yeh, J. Dong, J. Zhong, L.Tian, M. Chen, G. Tang, M. Soltanolkotabi,  and L.  Waller},
 {\em Experimental robustness of Fourier  ptychography phase retrieval algorithms}, {  Optics Express}
{  23} (2015) 33214-33240.

 
\bibitem{noise2}
{\sc Y. Zhang, P. Song, Q. Dai},  {\em Fourier ptychographic microscopy using a generalized Anscombe transform approximation of the mixed Poisson-Gaussian likelihood},
{  Opt. Exp.} {  25} (2017) 168-179. 

 \bibitem{FPM13}
{\sc  G. Zheng, R. Horstmeyer and C.Yang},  {\em Wide-field, high-resolution Fourier ptychographic microscopy}, {  Nature Photonics} {  7} (2013), 739-745.
 
 \bibitem{adaptive}
{\sc C. Zuo, J. Sun and Q. Chen},  {\em Adaptive step-size strategy for noise-robust Fourier ptychographic microscopy}, {  Optics Express} {  24} (2016), 20724-20744.
 
\end{thebibliography}

\end{document}